\documentclass[11pt]{amsart}
\usepackage[T1]{fontenc}
\usepackage{lmodern,amsmath,amssymb,amsthm,mathtools,booktabs}
\usepackage[margin=1in]{geometry}
\usepackage{xcolor,enumitem}
\usepackage[colorlinks=true,linkcolor=blue!45!black,citecolor=blue!45!black,urlcolor=blue!45!black]{hyperref}
\usepackage{microtype}
\usepackage{tikz,tikz-cd}
\newtheorem{theorem}{Theorem}[section]
\newtheorem{proposition}[theorem]{Proposition}
\newtheorem{lemma}[theorem]{Lemma}
\newtheorem{corollary}[theorem]{Corollary}
\newtheorem{conjecture}[theorem]{Conjecture}
\newtheorem{problem}{Problem}
\theoremstyle{remark}
\newtheorem{remark}[theorem]{Remark}
\newtheorem{example}[theorem]{Example}
\newcommand{\Z}{\mathbb Z}
\newcommand{\htq}{\operatorname{ht}}
\newcommand{\mword}[1]{[#1]}
\newcommand{\corder}[1]{\langle #1\rangle}
\newcommand{\qtuple}[1]{\mathbf b(#1)}
\newcommand{\oqtuple}[2]{\mathbf b_{#1}(#2)}
\newcommand{\file}[1]{\texttt{#1}}
\newcommand{\claim}[1]{\textup{[\texttt{#1}]}}

\title[Mutation-acyclicity in rank four]{Mutation-acyclicity in rank four is decidable but not semialgebraic}

\author{Kymani T. K. Armstrong-Williams}
\address{Centre for Theoretical Physics and Astronomy, Queen Mary University of London, London E1 4NS, United Kingdom}
\email{k.t.k.armstrong-williams@qmul.ac.uk}
\author{Edward Hirst}
\address{Institute of Mathematics, Statistics and Scientific Computing, University of Campinas (Unicamp), Campinas 13083-859, Brazil}
\email{ehirst@unicamp.br}
\author{Blake Jackson}
\address{Institute for Computer-Aided Reasoning in Mathematics, Carnegie Mellon University, 4720 Forbes Avenue, Pittsburgh, PA 15213, USA}
\email{jackson@icarm.io}
\author{Kyu-Hwan Lee}
\address{Department of Mathematics, University of Connecticut, Storrs, CT 06269, USA; Korea Institute for Advanced Study, Seoul 02455, Republic of Korea}
\email{khlee@math.uconn.edu}
\keywords{quiver mutation, mutation-acyclicity, decidability, Markov invariant, proper cyclic orderings, polynomial invariants, computer-assisted proof}
\subjclass[2020]{13F60, 05E10, 05C20, 68V05}

\thanks{
K.T.K.A.W.\ was supported by a Royal Society Career Development Fellowship. E.H.\ was supported by a
S\~ao Paulo Research Foundation (FAPESP) grant 2024/18994-7.
B.J.\ was supported by ICARM through NSF grant DMS-2425401. K.-H.L.\ was supported by NSF grant DMS-2617278.}

\begin{document}
\begin{abstract}
    We prove that mutation-acyclicity of quivers on four vertices is decidable.
    If a four-vertex quiver $Q$ is mutation-acyclic, then some sequence of mutations takes it to an acyclic quiver without any arrow multiplicity exceeding $\max\{\htq(Q),m(Q)\}$, where $\htq(Q)$ is the largest arrow multiplicity of $Q$ and $m(Q)$ is the Markov invariant of Fomin and Neville evaluated on the totally proper wiggle class of $Q$.
    A search within a computable form of this bound decides mutation-acyclicity of four-vertex quivers with $O(h^{24})$ states for input height $h\ge1$.
    Together with a theorem of Caldero and Keller, it also decides mutation equivalence of two four-vertex quivers up to relabeling whenever one of them is mutation-acyclic.
    We conjecture that $\htq(Q)$ alone suffices as the bound, which would reduce the search to $O(h^{6})$ states, and we verify this for all four-vertex quivers of height at most three.
    In the other direction, for every $n\ge4$ we construct an affine family of connected $n$-vertex quivers whose nonnegative even members are mutation-acyclic and whose positive odd members are mutation-cyclic.
    Consequently, unlike in rank three, no fixed finite Boolean combination of polynomial equations and strict polynomial inequalities in the arrow weights (i.e., a semialgebraic criterion) decides mutation-acyclicity of $n$-vertex quivers, even allowing polynomials that are not mutation-invariant.
    Finally, we correct the criterion for four-vertex quivers of height at most two stated in our earlier paper, and recover its classification counts with short certificates.
    The calculations in the proof of the height bound are verified by computer for all values of their parameters.
\end{abstract}
\maketitle

\section{Introduction}
Quivers are finite directed multigraphs without loops or directed 2-cycles.
Fomin and Zelevinsky introduced quiver mutation in their theory of cluster algebras \cite{FZ1,FZ2}.
A quiver is \textbf{mutation-acyclic} if some sequence of mutations takes it to a quiver without directed cycles, and \textbf{mutation-cyclic} otherwise.
Mutation-acyclic quivers are the quivers of acyclic cluster algebras, which have been studied through cluster categories and quiver representations \cite{BMR,BMRT2007,caldero2006,caldero2008}, through $c$-vectors \cite{speyer-thomas2013,seven2015}, and through reflection groups \cite{barot-marsh2015,felikson-tumarkin2016,felikson-tumarkin2018}.
This paper concerns the following decision problem.
\begin{problem}\label{q:mutation-acyclicity}
    Given a quiver $Q$, decide whether $Q$ is mutation-acyclic.
\end{problem}
Neville records that no algorithm for Problem~\ref{q:mutation-acyclicity} is known \cite[Chapter~I]{neville-thesis}, and in our earlier paper we noted that no algorithm or mutation invariant was known to decide mutation-acyclicity in rank four or higher \cite{AHJL2025}.
The problem is closely related to deciding mutation equivalence, for which Fomin, Williams and Zelevinsky ask for an algorithm \cite[Problem~2.6.12]{FWZ}, and for which no algorithm is known \cite{fomin-neville2026}, \cite[Remark~2.3]{FIL2021}, \cite[slide~7]{fomin-opac}.

For quivers with at most three vertices, Problem~\ref{q:mutation-acyclicity} is solved.
Quivers with one or two vertices are acyclic.
For three vertices, Assem, Blais, Br\"ustle and Samson gave a descent algorithm \cite[Section~2.5]{assem2006}, and Beineke, Br\"ustle and Hille proved that an oriented triangle is mutation-cyclic exactly when its multiplicities are at least two and its Markov constant is at most four \cite[Theorem~1.2]{beineke2011}.
Seven extended the description of rank-three mutation classes to skew-symmetrizable matrices \cite{seven2013}, Akagi gave the skew-symmetrizable form of the Markov-constant criterion \cite{akagi2024}, and Felikson and Tumarkin and Casals and Ke studied rank-three quivers with real weights \cite{felikson-tumarkin2019,casals-ke2026}.
Quivers of finite mutation type, classified by Felikson, Shapiro and Tumarkin \cite{FST2012} (see also \cite{derksen-owen2008} and, for acyclic quivers, \cite{buan-reiten2006}), have finite mutation classes, so for them the problem reduces to a finite enumeration.

For quivers with four or more vertices, only partial results are known.
Felikson and Tumarkin showed that a quiver mutation-equivalent to an acyclic quiver with at least two arrows between every pair of vertices reaches such a quiver by a descent that never increases an arrow multiplicity \cite[Corollary~3.5 and Remark~3.6]{felikson-tumarkin2018}, and they note that Corollary~3.5 is also proved in Warkentin's thesis \cite{warkentin2014}.
Lawson and Mills determined which minimal mutation-infinite quivers are mutation-acyclic \cite[Section~4]{lawson-mills2018}.
In an earlier paper, we classified the four-vertex quivers with at most two arrows between any two vertices \cite{AHJL2025}, in the course of a machine-learning study; see \cite{BFHHMX2020,HJCBRBK2025, Dechant:2022ccf} for other applications of machine learning to quiver mutation.
The criterion stated in \cite[Theorem~2.9]{AHJL2025} is wrong, and we correct it in Section~\ref{sec:correction}.

There is also a long list of necessary conditions for mutation-acyclicity.
Buan, Marsh and Reiten proved that every full subquiver of a mutation-acyclic quiver is mutation-acyclic \cite[Corollary~5.3]{BMR}, a consequence of their Theorem~2.13 that was pointed out to them by A.~Seven, and Warkentin gave a more explicit proof \cite{warkentin2011}.
Both proofs are representation-theoretic, and an elementary proof is still wanted \cite[Example~4.1.5]{FWZ45}, \cite[slides~11--12]{fomin-opac}.
Every mutation-acyclic quiver admits an admissible quasi-Cartan companion \cite[Theorem~1.2]{seven2015}, a notion going back to Barot, Geiss and Zelevinsky \cite{barot2006}.
Every mutation-acyclic quiver also has a reddening sequence, since acyclic quivers have maximal green sequences \cite[Lemma~2.20]{BDP2014} and the existence of a reddening sequence is mutation invariant \cite[Corollary~19]{muller2016}.
Neville proved that every mutation-acyclic quiver is totally proper \cite[Theorem~1.1]{neville2025}, as conjectured by Fomin and Neville \cite[Conjecture~15.9]{fomin-neville2026}.
Warkentin's forks give further mutation-invariant tools \cite{warkentin2014}, developed by Ervin \cite{ervin2024}.
In rank four there are mutation-cyclic quivers without mutation-cyclic full subquivers on three vertices, such as the dreaded torus \cite[Section~7.2]{matherne-muller2015}, whose cluster algebra comes from a torus with one boundary component and one marked point \cite{FST2008}, and Warkentin's oriented four-cycles with multiplicities $a,b,a,b\ge2$ \cite[Example~6.11]{warkentin2014}.
Mutation invariants beyond the determinant are studied in \cite{chelnokov2023,casals2024,seven-unal2025,ervin2024,fomin-neville2025,fomin-neville2026}, and Ervin and Jackson ask for a description of mutation-acyclicity as an intersection of hereditary mutation-invariant properties \cite{ervin-jackson-topology}.

The starting point for our main result is a finiteness theorem of Neville.
Fomin and Neville attach to a quiver with a cyclic ordering of its vertices a Markov invariant \cite{fomin-neville2026}.
For a mutation-acyclic quiver $Q$, write $m(Q)$ for its value on the totally proper wiggle class of $Q$ (Section~\ref{sec:background}), and $\htq(Q)$ for the largest arrow multiplicity of $Q$.
Neville proved that the arrow multiplicities of every acyclic quiver in the mutation class of $Q$ are bounded in terms of $m(Q)$, so that only finitely many acyclic quivers share a given Markov invariant \cite[Proposition~3.21]{neville2025}.
Our main result extends this bound from the acyclic quivers of a mutation class to a mutation sequence leading to one of them.

\begin{theorem}\label{thm:introheight}
    Let $Q$ be a mutation-acyclic quiver on four vertices.
    Then some sequence of mutations takes $Q$ to an acyclic quiver, with every quiver along the sequence of height at most $\max\{\htq(Q),m(Q)\}$.
    Consequently, mutation-acyclicity of four-vertex quivers is decidable.
    For input height $h\ge1$, the decision procedure searches $O(h^{24})$ quivers.
\end{theorem}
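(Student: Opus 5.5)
We prove the height bound by a descent argument, and then read off decidability and the state count from it.

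\textbf{Setup.} Write the four-vertex quiver $Q$ as a skew-symmetric integer matrix $B$ with entries $b_{ij}$. Fix the totally proper wiggle class of $Q$, which exists by \cite[Theorem~1.1]{neville2025} since $Q$ is mutation-acyclic. On that class the Markov invariant $m(Q)$ is a fixed polynomial in the $b_{ij}$ and is preserved by mutation. Neville's bound \cite[Proposition~3.21]{neville2025} says that every acyclic quiver in the mutation class has height at most $m(Q)$.

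\textbf{Step 1: a reduction move.} Suppose $Q$ is not acyclic and $\htq(Q) > m(Q)$. We want a vertex $k$ with $\htq(\mu_k Q) < \htq(Q)$. Let $b_{ij}$ be an arrow of maximal multiplicity.
\begin{itemize}
\item Compare the Markov invariant with the rank-three Markov constants $b_{ij}^2+b_{jk}^2+b_{ki}^2 - |b_{ij}b_{jk}b_{ki}|$ of the triangles through $i,j$.
\item Using proper cyclic orderings, show that a large maximal entry forces one of the two remaining vertices $k$ to lie on an oriented triangle with $i,j$ in which $b_{ij}$ dominates.
\item Mutation at $k$ then replaces $|b_{ij}|$ by $\bigl||b_{ij}| - |b_{ik}b_{kj}|\bigr|$ and changes the other entries at $k$ only by sign.
\end{itemize}
One then has to check that the other modified entries, such as $b_{il}$ and $b_{jl}$ for the fourth vertex $l$, do not exceed the current height.

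\textbf{Step 2: descent while the height is large.} Use the sum of squares or the sorted multiplicity vector as a potential. This shows that while the height exceeds $m(Q)$, some mutation does not raise the height and strictly lowers the potential. Iterating gives a path within height $\max\{\htq(Q), m(Q)\}$ to a quiver that is either acyclic or has height at most $m(Q)$.

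\textbf{Step 3: the region of height at most $m(Q)$.} Below height $m(Q)$, mutation-acyclicity of the original quiver gives some acyclic target $Q'$, and by Neville's bound $Q'$ has height at most $m(Q)$. We still need a path to such a target that stays below $\max\{\htq(Q),m(Q)\}$. The plan is to run the same descent argument with the target in view, possibly reversing a Felikson--Tumarkin type descent \cite[Corollary~3.5]{felikson-tumarkin2018} from $Q'$.

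\textbf{Main obstacle.} The delicate part is the case analysis for Step 1, handling the sign patterns of the six entries relative to the cyclic ordering. We expect to reduce it to finitely many polynomial inequalities in parameters, and to verify these by computer for all parameter values, as the abstract indicates.

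\textbf{Decidability and complexity.} $m(Q)$ is computable once a totally proper ordering is found among the finitely many cyclic orderings. The value of $m(Q)$ is polynomial in the entries, at worst degree 4 in $h$. So the bound $H=\max\{h,m(Q)\}$ is $O(h^4)$. The search explores all quivers with six entries bounded by $H$, which is $O(H^6)=O(h^{24})$ states. A breadth-first search from $Q$ in this finite graph decides whether an acyclic quiver is reachable. If $Q$ is not mutation-acyclic, it may have no totally proper ordering, which already certifies cyclicity. Otherwise the search fails to find an acyclic quiver and answers no.
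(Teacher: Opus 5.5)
\textbf{There is a genuine gap: Step~3 is the entire theorem in the regime $\htq(Q)\le m(Q)$, and your plan gives no way to prove it.} Steps~1--2 do not apply there, and Step~2 leaves you there anyway. What you need is that \emph{some} route to an acyclic quiver never exceeds $m(Q)$. A statement about descending from individual quivers cannot give this. A quiver below level $m$ may have to climb before it reaches an acyclic quiver, and descents started from different high quivers need not meet. This is the confluence problem that the paper cites from Fomin and Neville's arbitrarily long primitive mutation cycles. The Felikson--Tumarkin descent applies only to classes containing an acyclic quiver with at least two arrows between every pair of vertices. Reversing a descent from $Q'$ moves away from $Q'$, not toward your quiver.

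The missing idea is to argue about routes rather than quivers. Fix a route to an acyclic quiver that minimizes, lexicographically, its maximal height, then its number of visits at that height, then its length. If its maximal height $F$ exceeds $\max\{\htq(Q),m(Q)\}$, put $T=F-1$. Both endpoints are then low, so every excursion to height $F$ is bracketed by low quivers. Replace one excursion by a lower route, splitting into cases on the multiplicity of the edge between its first two mutation vertices:
\begin{enumerate}[label=(\roman*)]
\item no edge: use a commuting square;
\item a single arrow: use pentagon detours, together with the classification of the exceptional predecessors $A_c$ and $I_{t,u}$;
\item multiplicity at least two: this case is impossible.
\end{enumerate}
Minimality of the route is used essentially here (Remark~\ref{rem:unitminimality}).

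\textbf{Step~1 is also false as stated.} For $F\ge6$, let $X_F$ be the quiver with $\qtuple{X_F}=(1,-1,1,-1,F,-F)$. It is $\mu_{\mword{1,2,3}}$ of $(1,-2,F-1,1,0,2-F)$. The word $\mu_{\mword{3,1}}$ sends that quiver to the same shape with $F$ replaced by $F-2$. Iterating, one reaches an acyclic quiver whose value in a topological ordering is $5$, so $X_F$ is mutation-acyclic with $m(X_F)=5<F$. Yet $\mu_1X_F$ and $\mu_3X_F$ still have height $F$, while $\mu_2$ and $\mu_4$ create entries $F+1$ and $F^2-1$. Here two maximal entries share a vertex, so no single triangle reduction lowers the height. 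Everything therefore rests on the potential in Step~2, for which you give no argument. Even a full proof of Steps~1--2 would not close Step~3.

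\textbf{The decision procedure needs a computable bound.} You cannot single out ``the totally proper ordering among the finitely many cyclic orderings'', because total properness quantifies over all sequences of proper mutations. Replace $m(Q)$ by the maximum of $M_\omega(Q)$ over all proper wiggle classes. Use the absence of any proper class, i.e.\ being a vortex, as the cyclicity certificate, as in Corollary~\ref{thm:algorithm}.
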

The height statement is Theorem~\ref{thm:height}, and the decision procedure and the state count are Corollaries~\ref{thm:algorithm} and~\ref{cor:complexity}.
Since the totally proper class of $Q$ is not known in advance, the procedure uses the largest Markov value over all proper wiggle classes of $Q$ in place of $m(Q)$, and a four-vertex quiver without proper wiggle classes is a vortex and hence mutation-cyclic.
The bound concerns the existence of one suitable mutation sequence, and other routes to an acyclic quiver may climb arbitrarily high.
The running time is pseudo-polynomial in the arrow multiplicities, rather than polynomial in their binary encoding length.

The proof chooses a route to an acyclic quiver that minimizes its maximum height, then the number of quivers at that height, then its length, and shows that a stretch of such a route above $\max\{\htq(Q),m(Q)\}$ could be replaced by a lower one.
The replacements are found by a case analysis on how the first two mutation vertices of such a stretch are joined.
Every calculation in this case analysis is verified by a computer program, exactly and for all values of the parameters (Appendices~\ref{app:lemmas} and~\ref{app:signs}).
Descent methods face an obstacle in rank four.
Fomin and Neville constructed arbitrarily long primitive mutation cycles of four-vertex quivers, and observed that any descent algorithm would have to be confluent over arbitrarily large diamonds \cite{fomin-neville2025}; see also \cite{ervin-neville2026} and \cite[Chapter~I]{neville-thesis}.
Soukup observes that if mutation equivalence were undecidable, some equivalent quivers would be joined only by extremely long mutation sequences, going far ``into the sky'', and suggests that a bound like that of Coward and Lackenby for Reidemeister moves may exist for quiver mutation \cite[Sections~4.1--4.2]{soukup2025}, \cite{coward-lackenby2014}.
Theorem~\ref{thm:introheight} gives such a bound for mutation-acyclicity in rank four, since a bound on height also bounds the length of a shortest route.

We do not know whether the term $m(Q)$ is needed.
We conjecture that $\htq(Q)$ alone suffices, which would reduce the search to $O(h^6)$ quivers, and we verify this for all four-vertex quivers of height at most three (Conjecture~\ref{conj:height}).
The height bound also certifies mutation-cyclicity in an example where none of the other obstructions of this paper applies to any quiver within the search range (Example~\ref{ex:height3}).
Combined with the description of acyclic seeds by Caldero and Keller \cite[Corollary~4 and its proof]{caldero2006}, Theorem~\ref{thm:introheight} also decides mutation equivalence of two four-vertex quivers up to relabeling whenever at least one of them is mutation-acyclic (Corollary~\ref{cor:equivalence}).

Our second main result shows that no criterion of the kind that works in rank three can exist in higher rank.
The rank-three criterion of Beineke, Br\"ustle and Hille is a finite Boolean combination of polynomial sign conditions on the arrow weights.
\begin{theorem}\label{thm:nogo}
    Let $n\ge4$.
    No fixed, finite Boolean combination of polynomial equations and strict polynomial inequalities in the $\binom n2$ arrow weights, with real coefficients and not necessarily mutation-invariant, decides mutation-acyclicity for all integral quivers on $n$ vertices.
    This remains true when the quivers are required to be connected.
\end{theorem}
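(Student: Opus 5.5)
The plan is to combine the affine family promised in the abstract with the elementary fact that a semialgebraic set cannot alternate along the integer points of a line. First, for each $n\ge4$ I construct an affine family $Q(t)$ of connected $n$-vertex quivers. The arrow weights $b_{ij}(t)=\alpha_{ij}+\beta_{ij}t$ are affine in an integer parameter $t$. I want $Q(t)$ to be mutation-acyclic for every even $t\ge0$ and mutation-cyclic for every odd $t\ge1$.

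The key reduction works as follows. Suppose a fixed finite Boolean combination $\Phi$ of conditions $p_k(b)=0$ and $p_k(b)>0$ decides mutation-acyclicity on all integral $n$-vertex quivers. Restrict each $p_k$ to the line $t\mapsto b(t)$. This gives finitely many univariate real polynomials $q_k(t)$. Each $q_k$ is either identically zero or has finitely many roots. So for $t$ larger than every root of every nonzero $q_k$, the sign of each $q_k(t)$ is constant. Hence the truth value of $\Phi(b(t))$ is eventually constant. This contradicts the eventual alternation between acyclic even members and cyclic odd members. The argument holds whatever the coefficients, so it does not use mutation invariance. Connectedness comes for free if the family is built connected.

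The real work is the construction. For $n=4$ I would start with a mutation-acyclic quiver and apply a mutation sequence whose dependence on the parameter is linear. One natural choice is an acyclic quiver with weights affine in $s$, mutated at a fixed sequence of vertices where each intermediate sign pattern is stable for $s\ge0$. Mutation is then piecewise linear, and on a fixed sign chamber it is linear, so the resulting weights are affine in $s$. I then want a second family, a cyclic one, to lie on the same line at odd parameters.

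A cleaner route is to pick a line in the $4$-vertex weight space through known examples. Mutation-cyclic members can be certified by Warkentin's oriented four-cycles with multiplicities $a,b,a,b\ge2$, or by a vortex or non-totally-proper test (Neville's theorem). Mutation-acyclic members are certified by an explicit mutation to an acyclic quiver. The parity dependence would come from an entry like $b_{ij}$ whose parity decides whether some mutation kills a cycle. For example, an entry $2t$ against an entry $t$ could make a reflection land exactly on zero for even $t$ but not for odd $t$.

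For $n>4$ I would attach a path of extra vertices to one vertex by single arrows, oriented so that no new oriented cycles form. For the even members, mutations at the four core vertices leave the pendant tree acyclic once the core is acyclic; I need to check that the arrows to the pendant don't create cycles, possibly by choosing sink or source attachment. For the odd members, the core is a full subquiver, and full subquivers of mutation-acyclic quivers are mutation-acyclic (Buan--Marsh--Reiten). So cyclicity of the core propagates to the whole quiver.

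The main obstacle is finding the $n=4$ family and certifying both halves uniformly in $t$. On the acyclic side, I would aim for a single parametrized mutation sequence with sign conditions that hold for all even $t\ge0$. On the cyclic side, I would aim for a uniform invariant-based obstruction, either failure of total properness (via the Markov invariant or proper cyclic orderings) or reduction to a Warkentin four-cycle or other known mutation-cyclic four-vertex quiver. I would first search small affine lines by computer using the decision procedure of Theorem~\ref{thm:introheight}, then prove the pattern symbolically.
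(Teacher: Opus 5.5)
Your reduction step is correct and is exactly the paper's argument: restrict each $P_\ell$ to the line, so that every nonzero restriction has eventually constant sign and the Boolean combination is eventually constant. The treatment of odd members for $n>4$ via the full-subquiver theorem also matches the paper.

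\textbf{The gap.} The whole content of the theorem is the existence of the alternating family, and you never produce one. Worse, the strategies you sketch for building and certifying it cannot work, by your own argument. Take a fixed word $\mathbf w$ and a family whose weights are polynomial in $t$. By induction on the length of $\mathbf w$, for large $t$ every intermediate entry is a fixed polynomial of constant sign, since each $[x]_+$ becomes $x$ or $0$. Hence each of the following is eventually constant in $t$: whether $\mu_{\mathbf w}Q(t)$ is acyclic, is a vortex, has no proper wiggle class, contains a mutation-cyclic full triangle, or is a Warkentin $a,b,a,b$ cycle. So no ``single parametrized mutation sequence with sign conditions that hold for all even $t$'' can work, nor any finite set of such words. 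Any such certificate that makes all large even members acyclic does the same for the large odd members. Likewise, an entry ``landing exactly on zero for even $t$'' is a polynomial identity in $t$, so it holds for all $t$ or for only finitely many. Certificates along an alternating line must therefore have length unbounded in $t$ (compare Corollary~\ref{cor:nobound}). Also, mutation is not linear on a sign chamber, because products of entries appear. Mutating an affine family therefore generally gives a polynomial family: this is harmless for the reduction, but your claim as stated is false.

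\textbf{The missing idea.} What is needed is a self-similar recurrence. The paper takes $\qtuple{Q(t)}=(-2,t+2,t,-t,2-t,0)$ and checks that for $t\ge2$
\[
\mu_2Q(t)=Q(t-2)^{\pi_{12}}.
\]
The family stays affine because the products created at vertex $2$ involve the constant entry $b_{12}=-2$, while $[b_{32}]_+[b_{24}]_+$ and $[-b_{32}]_+[-b_{24}]_+$ vanish for $t\ge2$. Iterating reduces every $t\ge0$ to one of two base cases. $Q(0)$ is acyclic. For $Q(1)$, the quiver $\qtuple{\mu_2Q(1)}=(2,1,1,1,-1,1)$ is a vortex, hence mutation-cyclic by Lemma~\ref{lem:vortex}.

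\textbf{The case $n>4$.} Your sketch for the even members also needs care. Mutating at the attachment vertex rewires the pendant arrows and can create a directed cycle through the path even after the core has become acyclic. The paper avoids this by attaching $n\to\cdots\to5\to3$ at vertex $3$. No added vertex is adjacent to vertex $2$, so the recurrence transfers verbatim to $Q_n(t)$, and the even members reduce to the acyclic $Q_n(0)$ without ever mutating next to the path.
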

For $n=4$ the proof uses the affine family $Q(t)$, $t\in\Z$, of Section~\ref{sec:family}, with arrow weights $(-2,t+2,t,-t,2-t,0)$ in the coordinates of Section~\ref{sec:background}.
The family is mutation-acyclic exactly when $t<0$ or $t$ is even, so for large positive $t$ its mutation-acyclicity changes at every successive integer.
In contrast, substituting the six affine arrow weights into any fixed polynomial gives a one-variable polynomial, whose sign is eventually constant unless it vanishes identically.
For $n>4$ we attach a path to $Q(t)$.

Chelnokov proved that a mutation invariant of four-vertex quivers that is polynomial on each sign pattern of the arrow weights is a polynomial in the determinant \cite[Theorem~1]{chelnokov2023}.
The family $Q(t)$ has determinant $16$ for every $t$, so criteria built from such invariants cannot decide mutation-acyclicity, and Theorem~\ref{thm:nogo} complements this by removing the invariance hypothesis.
The family is also invisible to the Markov invariant and to properness, since every $Q(t)$ with $t\ge0$ has a single proper wiggle class, with the same Markov value as the acyclic quiver $Q(0)$ (Remark~\ref{rem:familymarkov}).
Theorem~\ref{thm:nogo} does not rule out arithmetic tests, mutation invariants in general, or algorithms whose number of steps depends on the input, and Theorem~\ref{thm:introheight} is an algorithm of the last kind.

Finally, Section~\ref{sec:correction} corrects the criterion of \cite[Theorem~2.9]{AHJL2025} for four-vertex quivers of height at most two and recovers the classification counts with short certificates.
All computations in this paper are carried out by a supplementary Python package, and checking them needs only the Python standard library.
The program \file{check\_proofs.py} verifies every calculation delegated from the appendices, \file{check\_census.py} verifies the certificates of Section~\ref{sec:correction} with an independent implementation of mutation, and \file{examples.py} checks the examples.
The main results are also formalized in Lean~4 with mathlib, and the Lean sources are available with the supplement at \url{https://github.com/KTKAW/rank4-paper}.
The formalization proves decidability of mutation-acyclicity in rank four with a coarser computable height bound than that of Theorem~\ref{thm:introheight}, the case $n=4$ of Theorem~\ref{thm:nogo} for rational coefficients, Theorems~\ref{thm:parity} and~\ref{thm:census}, and Proposition~\ref{prop:box}, the last for every oriented four-cycle with multiplicities $a,b,a,b\ge2$.

\section{Quivers and mutation}\label{sec:background}

A \textbf{quiver} of rank $n$ is a quiver on the vertices $1,\dots,n$.
Unless we explicitly take a smaller subquiver or say otherwise, our quivers have rank four.
Every rank-$n$ quiver $Q$ has a skew-symmetric integral \textbf{exchange matrix}
$B_Q=(b_{ij}(Q))$, where $b_{ij}(Q)>0$ means that there are $b_{ij}(Q)$ arrows $i\to j$.
For rank four, we encode $Q$ by the tuple
\begin{equation}\label{eq:matrix}
    \qtuple{Q}=(b_{12}(Q),b_{13}(Q),b_{14}(Q),b_{23}(Q),b_{24}(Q),b_{34}(Q))=(a,b,c,d,e,f),
\end{equation}
so that
\[
    B_Q=\begin{pmatrix}0&a&b&c\\-a&0&d&e\\-b&-d&0&f\\-c&-e&-f&0\end{pmatrix}.
\]
The coordinates of every six-tuple in this paper are in the fixed order
$12,13,14,23,24,34$, unless a cyclic ordering is explicitly attached as a subscript.
When a quiver is fixed in a calculation, we may suppress its argument and write $b_{ij}$.
Six-entry parenthesized lists denote coordinate tuples; mutation words and cyclic orderings use different brackets, as specified below.
We say a quiver is \textbf{acyclic} if it contains no directed cycles of any length.

The operation of interest is \textbf{mutation} at a vertex $k$.
To mutate at vertex $k$, first add an arrow $i \to j$ for every two-path $i\to k\to j$, counting multiplicities.
Next, reverse all arrows touching $k$, and cancel pairs of arrows pointing in opposite directions.
The resulting quiver is denoted $\mu_kQ$.
In matrix form \cite{FZ1}, abbreviate $b_{ij}=b_{ij}(Q)$ and $b'_{ij}=b_{ij}(\mu_kQ)$. Then
\begin{equation}\label{eq:mutation}
b'_{ij}=\begin{cases}
    -b_{ij},&i=k\text{ or }j=k,\\
    b_{ij}+[b_{ik}]_+[b_{kj}]_+-[-b_{ik}]_+[-b_{kj}]_+,&i,j\ne k,
\end{cases}\qquad [x]_+=\max(x,0).
\end{equation}
Mutation is an involution, and preserves the vertex sets of the connected components.
A quiver is \textbf{mutation-acyclic} if a finite sequence of mutations takes it to an acyclic quiver; otherwise it is \textbf{mutation-cyclic}.
If a quiver is acyclic, a \textbf{topological ordering} is a total order on its vertices such that every directed path of the quiver points from an earlier vertex to a later vertex.
In other words, if there is a directed path $i \to j$ in the quiver, then $i < j$ in the topological ordering.

We write a \textbf{mutation word} as a bold letter $\mathbf w=\mword{k_1,\dots,k_r}$.
The square brackets list the vertices in the order of mutation, from left to right:
\[
    \mu_{\mathbf w}Q=\mu_{k_r}\bigl(\cdots\mu_{k_2}(\mu_{k_1}Q)\cdots\bigr).
\]
For example, $\mu_{\mword{1,2,3}}Q=\mu_3(\mu_2(\mu_1Q))$.
The empty word is $\mword{}$, and juxtaposition of words means concatenation in the order performed.
A mutation sequence is \textbf{reduced} if consecutive mutation vertices differ; since mutation is an involution, mutations at the same vertex back-to-back cancel each other out.

The \textbf{height} of a quiver $Q$ is
\[
    \htq(Q)=\max_{i<j}|b_{ij}(Q)|.
\]
A \textbf{full subquiver} of a quiver $Q$ consists of a subset of vertices and all arrows between them.
On the level of the exchange matrix, the full subquiver on a subset of vertices is obtained by taking the full submatrix on the rows and columns corresponding to the subset of vertices.

For a permutation $\pi\in S_n$, write $Q^\pi$ for the \textbf{relabeling} defined by
\[
    b_{ij}(Q^\pi)=b_{\pi(i)\pi(j)}(Q).
\]
Thus old vertex $\pi(i)$ receives new label $i$.
We write permutations in cycle notation, with spaces between successive vertices and no commas.
The two relabelings used repeatedly in rank four are
\[
    \pi_{12}=(1\,2),\qquad \pi_{123}=(1\,2\,3),
\]
with all unlisted vertices fixed. In particular, $\pi_{123}(1)=2$, $\pi_{123}(2)=3$, and $\pi_{123}(3)=1$.
Write $Q^{\mathrm{op}}$ for the quiver obtained by reversing all arrows, so that $B_{Q^{\mathrm{op}}}=-B_Q$.
Relabeling and reversal of all arrows preserve height and acyclicity.
Both commute with mutation, in the following sense.
For reversal, \eqref{eq:mutation} gives $\mu_k(Q^{\mathrm{op}})=(\mu_kQ)^{\mathrm{op}}$ directly.
To transport a mutation through a relabeling, use
\begin{equation}\label{eq:tailtransport}
    \mu_{\pi^{-1}(k)}(Q^\pi)=(\mu_kQ)^\pi.
\end{equation}
For a word $\mathbf w=\mword{k_1,\dots,k_r}$, let
$\pi^{-1}\mathbf w=\mword{\pi^{-1}(k_1),\dots,\pi^{-1}(k_r)}$.
Then $\mu_{\pi^{-1}\mathbf w}(Q^\pi)=(\mu_{\mathbf w}Q)^\pi$.
Thus replacing an endpoint $Q$ by $Q^\pi$ requires replacing each later mutation vertex $k$ by $\pi^{-1}(k)$; all later heights and the acyclicity of the final quiver are preserved.

We now state some classical results of \cite[Theorem~1.2]{beineke2011} and \cite[Corollary~5.3]{BMR}, which we will use frequently enough to warrant restating here.
\begin{proposition}[{\cite[Theorem~1.2]{beineke2011}, \cite[Corollary~5.3]{BMR}}]\label{prop:rank3}
    A three-vertex quiver is mutation-cyclic if and only if it is an oriented triangle whose arrow multiplicities $x,y,z$ satisfy
    \[
        x,y,z\ge2,\qquad x^2+y^2+z^2-xyz\le4.
    \]
    The value $x^2+y^2+z^2-xyz$ is called the \textbf{Markov constant}, and the expression defining it is the \textbf{Markov polynomial}.
    Also, every full subquiver of a mutation-acyclic quiver is mutation-acyclic.
    This implies that a quiver is mutation-cyclic if it contains a mutation-cyclic full subquiver.
\end{proposition}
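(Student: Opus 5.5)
This proposition is quoted from the literature, so the plan is to assemble it from the cited sources rather than reprove them. It has two parts, and I treat them separately.

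For the rank-three criterion, first dispose of the acyclic case: a three-vertex quiver that is not an oriented triangle is acyclic, hence mutation-acyclic. So let $Q$ be an oriented triangle with multiplicities $x,y,z\ge1$. The Markov constant $C=x^2+y^2+z^2-xyz$ is invariant under mutation among oriented triangles. Indeed, mutating at the vertex opposite the edge of weight $z$ sends $z\mapsto xy-z$, the other root of $C$ viewed as a quadratic in $z$, and the triangle stays oriented as long as $xy-z>0$.

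Next I would run the descent of Assem, Blais, Br\"ustle and Samson. As long as the triangle is cyclic, mutate at the vertex opposite the largest weight whenever this strictly decreases it, i.e.\ when $xy-z<z$. Each step lowers the sum of weights, so the descent terminates. It stops in one of two ways:
\begin{enumerate}[label=(\roman*)]
\item some weight becomes $\le 0$ after a mutation, so the quiver is acyclic;
\item we reach a triangle with $z\le xy/2$, where $z\ge y\ge x$ is the largest weight.
\end{enumerate}
In case (ii), a short inequality argument shows that if $x,y\ge2$ then $C\le4$, and conversely. This is the Vieta-jumping computation familiar from the Markov equation. In that regime every mutation of a cyclic triangle with all weights $\ge2$ and $C\le 4$ again produces such a triangle: mutating at the vertex opposite $z$ gives $xy-z\ge z\ge2$, and mutating at other vertices increases weights. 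Hence the mutation class consists only of cyclic triangles, and the quiver is mutation-cyclic.

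When some weight equals $1$, or when $C>4$, I would show directly that the descent reaches a nonpositive weight. For example, with $x=1$ the step $z\mapsto y-z$ eventually yields an acyclic quiver. The main obstacle is making the case $C>4$ airtight, namely ruling out a descent that stops at a cyclic triangle with all weights $\ge2$ but $C>4$. I would handle this by the inequality $z\le xy/2 \Rightarrow C\le 4$ at a local minimum, together with invariance of $C$. For this step I would simply cite \cite[Theorem~1.2]{beineke2011}.

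For the hereditary statement I would cite \cite[Corollary~5.3]{BMR} as stated. No elementary proof is known (as the introduction notes), so I would not attempt one. The final sentence is its contrapositive: if a full subquiver is mutation-cyclic, then $Q$ cannot be mutation-acyclic, since otherwise that subquiver would be mutation-acyclic.
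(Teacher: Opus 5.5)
Your plan is the paper's: the paper gives no proof of this proposition. It restates \cite[Theorem~1.2]{beineke2011} and \cite[Corollary~5.3]{BMR}, and the last sentence is the contrapositive of heredity, exactly as you say. Your case~(ii) inequality, $C\le x^2+(2-x)y^2\le4$ at a local minimum, is also the computation the paper itself uses in the disconnected case of the proof of Theorem~\ref{thm:height}.

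If you want the descent sketch for the ``if'' direction to stand on its own, one step is wrong as written. The bound $xy-z\ge z$ holds only at a local minimum of the descent, but closure under mutation must hold at every triangle in the class. For example, from $(3,3,3)$, which has $C=0$, one mutation gives $(3,3,6)$, and mutating opposite the weight $6$ gives $3<6$. So your argument does not yet show that cyclic triangles with all weights $\ge2$ and $C\le4$ are closed under mutation.

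The repair uses Vieta directly. For $z'=xy-z$ one has $C=x^2+y^2-zz'$. If $z'\le0$, then $C\ge x^2+y^2\ge8$. If $z'=1$, then $C=x^2-xy+y^2+1\ge xy+1\ge5$. Hence $z'\ge2$ at every vertex, regardless of which weight is largest.

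For the other direction, you should also say explicitly that a cyclic local minimum automatically has smallest weight $x\ge2$: if $x=1$, then $y\le z\le y/2$, which is impossible. This is what lets your inequality give $C\le4$ there unconditionally, and so rules out a stop at a cyclic triangle when $C>4$.
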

We call a full subquiver on three vertices a \textbf{full triangle}, even when some of its three pairs of vertices are not joined.
Finding a mutation-cyclic full triangle is a practical way to prove mutation-cyclicity for quivers of larger rank.

\subsection{Cyclic orderings}
We use the cyclic ordering framework of \cite{fomin-neville2026} throughout this paper.
A \textbf{cyclic ordering} puts the vertices of a quiver around a circle.
We write $\omega=\corder{v_1,\dots,v_n}$, with angle brackets identifying lists that differ by cyclic rotation, but not by reversal.
A linear ordering is written $v_1<\cdots<v_n$; wrapping it around the circle gives $\corder{v_1,\dots,v_n}$.
A vertex $k$ is \textbf{proper} if every directed path $i\to k\to j$ encounters $i,k,j$ in clockwise order according to the cyclic ordering.
In other words, every two-path through $k$ follows the circle clockwise.
A \textbf{wiggle} swaps two neighboring vertices on the circle that are not joined by an arrow.
Cyclic orderings related by wiggles are \textbf{wiggle equivalent}; we denote the class of $\omega$ by $[\omega]_{\mathrm{wig}}$.
A wiggle-equivalence class is itself proper if each vertex is proper in some ordering in the class.

To perform a \textbf{proper mutation} at $k$, first choose an ordering in which $k$ is proper.
Mutate the quiver and move $k$ clockwise past its incoming neighbors in $\mu_kQ$ (its outgoing neighbors in $Q$), without passing its outgoing neighbors in $\mu_kQ$.
The result is well defined up to wiggles.
A class is \textbf{totally proper} if it remains proper after every sequence of proper mutations \cite[Definition~14.1]{fomin-neville2026}.

\begin{theorem}[{\cite[Observation~11.6, Theorem~14.3, and Conjecture~15.9]{fomin-neville2026}, \cite[Theorem~1.1]{neville2025}}]\label{thm:FN}
    A quiver of any rank has at most one totally proper wiggle-equivalence class.
    Every mutation-acyclic quiver has such a class.
    For an acyclic quiver, it is the class obtained by wrapping a topological ordering around the circle.
\end{theorem}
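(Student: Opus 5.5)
The statement collects results from \cite{fomin-neville2026} and \cite{neville2025}, so the plan is to assemble its three assertions from those sources, in the order acyclic case, transport to mutation-acyclic quivers, uniqueness. I do not intend to reprove the cited theorems.

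\emph{Acyclic case.} Let $Q$ be acyclic and wrap a topological ordering $v_1<\cdots<v_n$ around the circle. Every two-path $i\to k\to j$ has $i<k<j$ in the linear order, so it runs clockwise, and hence every vertex is proper in this ordering. Different topological orderings differ by adjacent transpositions of vertices that are not joined by an arrow. Wrapped around the circle, these are wiggles, so they all give the same class. To see that this class is \emph{totally} proper, I would invoke \cite[Theorem~14.3]{fomin-neville2026}, whose key point is that properness persists under arbitrary sequences of proper mutations starting from an acyclic quiver with its topological class.

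\emph{Existence for mutation-acyclic quivers.} Let $Q$ be mutation-acyclic, and let $\mu_{\mathbf w}Q=Q'$ with $Q'$ acyclic. I would run $\mathbf w$ backwards from $Q'$, starting from its topological class. By the acyclic case, every intermediate class reached by proper mutations is proper, so each reverse mutation can be performed properly. Proper mutation is invertible up to wiggles: mutating $k$ properly and then mutating $k$ again properly returns the original class. Therefore the class obtained on $Q$ is reachable from the totally proper class of $Q'$ by proper mutations, and so it is itself totally proper. Any proper mutation sequence from $Q$ is a proper mutation sequence from $Q'$ followed by further proper mutations. The delicate point is that the reversal is compatible with properness at each step; this is exactly what \cite[Theorem~1.1]{neville2025} (Fomin--Neville's Conjecture~15.9) provides, and I expect it to be the main obstacle if one did not cite it.

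\emph{Uniqueness.} For uniqueness I would cite \cite[Observation~11.6]{fomin-neville2026}. The idea is to compare two totally proper classes $[\omega]_{\mathrm{wig}}$ and $[\omega']_{\mathrm{wig}}$ of the same quiver. Suppose they differ in the relative cyclic position of some vertices joined by arrows. Then a suitable sequence of proper mutations applied to both classes simultaneously produces a vertex that is proper in one class but not in the other. This contradicts the total properness of that other class. Combining the three parts gives the theorem.
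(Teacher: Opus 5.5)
You have put the difficulty in the wrong place, so the key step rests on a citation that cannot support it. In your acyclic case you take total properness of the topological class from Fomin--Neville's Theorem~14.3. In the existence step you then invoke Neville's Theorem~1.1 for the `delicate point' that the reversed word can be performed properly.

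That point is purely formal once the acyclic case is known, as your own first sentences show. If $[\omega']_{\mathrm{wig}}$ is totally proper for the acyclic $Q'$, every class reached from it by proper mutations is proper. So each vertex of the reversed word can be mutated properly, and the class reached on $Q$ is totally proper, because every proper mutation sequence from it extends one from $[\omega']_{\mathrm{wig}}$. Invertibility of proper mutation is not even used.

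The same transport shows that total properness of the topological classes of acyclic quivers already implies that every mutation-acyclic quiver is totally proper. That is Fomin--Neville's Conjecture~15.9. So this statement cannot be a theorem of their paper; otherwise they would have had their conjecture for free. It is the real content of the existence assertion, and the paper cites Neville's Theorem~1.1, which resolves that conjecture, for exactly this. As written, your acyclic case is unsupported, and your appeal to Neville is spent on a step that needs nothing.

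Your uniqueness paragraph is also not an argument. Its whole content is the existence of a sequence of proper mutations that separates two distinct classes, and you give no reason why such a sequence exists. Since the statement is quoted from the literature, cite the uniqueness result outright rather than sketching it.

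With the attributions corrected, the rest of your assembly is fine. All topological orderings give a single proper wiggle class, since adjacent swaps between linear extensions exchange unjoined vertices and are therefore wiggles. Total properness of that class together with uniqueness then gives the third assertion.
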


\begin{remark}
    A practical consequence used throughout this paper is that if a quiver is known to be mutation-acyclic and has only one proper wiggle class, then that class is the totally proper one.
\end{remark}

\subsection{The Markov invariant}

For a cyclic ordering $\omega=\corder{v_1,v_2,v_3,v_4}$, choose the displayed linear representative and read the exchange matrix in that order:
\[
    B_\omega(Q)=(b_{v_i v_j}(Q))_{i,j=1}^4,\qquad
    \oqtuple{\omega}{Q}=(b_{v_1v_2}(Q),b_{v_1v_3}(Q),b_{v_1v_4}(Q),
    b_{v_2v_3}(Q),b_{v_2v_4}(Q),b_{v_3v_4}(Q)).
\]
The ordered matrix and tuple depend on the chosen linear representative; the Markov value below does not.
Writing $\oqtuple{\omega}{Q}=(a,b,c,d,e,f)$, define the \textbf{Markov invariant} by
\begin{equation}\label{eq:markov-invariant}
    M_\omega(Q)=a^2+b^2+c^2+d^2+e^2+f^2+abd+ace+bcf+def+acdf.
\end{equation}
This is the Markov invariant $M_Q$ of \cite[Equation (8.2)]{fomin-neville2026}, computed for the linear ordering $v_1<v_2<v_3<v_4$, and its explicit form for four vertices is \cite[Equation (9.6)]{fomin-neville2026}.
To match the two notations, note that the unipotent companion of \cite[Definition~4.2]{fomin-neville2026} has entries $-b_{v_i v_j}(Q)$ above the diagonal, so the letters $x,z,w,y,v,u$ of \cite[Examples~4.15 and~9.3]{fomin-neville2026} are our ordered coordinates $a,b,c,d,e,f$, respectively.
The invariant is unchanged by cyclic rotation of the ordering, by wiggles, and by proper mutations, by the invariance of the Alexander polynomial \cite[Corollary~8.5]{fomin-neville2026} read through \cite[Equation~(8.2)]{fomin-neville2026}.
For a mutation-acyclic quiver $Q$, the value of $M$ on the totally proper class of $Q$ is therefore an invariant of the mutation class of $Q$, and we denote it by $m(Q)$.
Returning to the fixed-label tuple $\qtuple{Q}=(a,b,c,d,e,f)$, the Pfaffian is
\begin{equation}\label{eq:pf}
    p(Q)=af-be+cd,\qquad \det B_Q=p(Q)^2.
\end{equation}
The factorization of matrix mutation in the proof of \cite[Lemma~3.2]{BFZ3} shows that mutation changes the sign of $p(Q)$, so $|p(Q)|$ is mutation invariant, and hence so is $\det B_Q$ \cite[Theorem~2.8.4]{FWZ}.
Relabeling gives $p(Q^\pi)=\operatorname{sgn}(\pi)p(Q)$ and hence preserves $p(Q)^2$, and reversing all arrows together with the cyclic ordering, which is passing to the opposite COQ of \cite[Definition~12.1]{fomin-neville2026}, preserves $M_\omega(Q)$ \cite[Lemma~12.4 and (8.2)]{fomin-neville2026}.

An acyclic quiver has a cyclic ordering, induced by a topological ordering, in which all its entries are nonnegative.
In that ordering the terms of the Markov invariant cannot cancel, so the Markov invariant bounds every arrow multiplicity of the acyclic quiver.
Since $m(Q)$ is a mutation invariant, it bounds the multiplicities of every acyclic quiver in the mutation class of $Q$, though not those of $Q$ itself.
The next lemma makes this precise.

\begin{lemma}\label{lem:endpoint}
    Suppose a four-vertex quiver $Q$ is mutation-acyclic, and let $m=m(Q)$.
    If $A$ is an acyclic quiver mutation equivalent to $Q$, then, for a topological ordering $v_1<\cdots<v_4$ of $A$ and $\omega_A=\corder{v_1,\dots,v_4}$, the coordinates $\oqtuple{\omega_A}{A}=(a,b,c,d,e,f)$ satisfy
    \begin{equation}\label{eq:endpoint}
        a,b,c,d,e,f\ge0,\qquad
        a^2+b^2+c^2+d^2+e^2+f^2\le m,\qquad
        2|p(A)|\le m.
    \end{equation}
    In particular, $\htq(A)\le\lfloor\sqrt m\rfloor$.
    If $Q$ is connected, then $m\ge3$ and $2|p(Q)|<m$.
\end{lemma}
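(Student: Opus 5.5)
The plan is to move everything to the acyclic endpoint $A$ and read off the bounds from the explicit form \eqref{eq:markov-invariant}.

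\emph{Identifying $m$ with $M_{\omega_A}(A)$.} By Theorem~\ref{thm:FN}, the totally proper class of $A$ is the class of $\omega_A=\corder{v_1,\dots,v_4}$ obtained from a topological ordering. Proper mutations carry a totally proper class to a totally proper class, and each quiver has at most one such class. So the totally proper classes of $Q$ and $A$ are linked by proper mutations. Since $M$ is invariant under proper mutations and wiggles, this gives $m=M_{\omega_A}(A)$. In the topological ordering every arrow goes from an earlier to a later vertex. Hence $\oqtuple{\omega_A}{A}=(a,b,c,d,e,f)$ has all entries nonnegative, and so do all five cubic and quartic terms $abd,ace,bcf,def,acdf$. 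Dropping them gives $a^2+\cdots+f^2\le m$. Each single square is then at most $m$, which gives $\htq(A)\le\lfloor\sqrt m\rfloor$.

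\emph{The Pfaffian bound.} The ordered tuple is the fixed-label tuple of a relabeling $A^\pi$. Since $p(A^\pi)=\pm p(A)$, we may compute $|p(A)|=|af-be+cd|$ in the ordered coordinates. Nonnegativity and AM--GM give
\[
2|af-be+cd|\le 2(af+be+cd)\le (a^2+f^2)+(b^2+e^2)+(c^2+d^2)\le m .
\]
By mutation invariance of $|p|$, we have $|p(Q)|=|p(A)|$.

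\emph{The connected case.} Mutation preserves the vertex sets of connected components, so $A$ is connected. A connected graph on four vertices has at least three edges, each contributing at least $1$ to the sum of squares, so $m\ge3$.

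For strictness, suppose $2|p|=m$. Then every inequality in the chain is an equality, which forces three things:
\begin{itemize}
\item $a=f$, $b=e$ and $c=d$ (equality in AM--GM);
\item all the cubic and quartic terms vanish;
\item $|af-be+cd|=af+be+cd$, so either $be=0$ or $af=cd=0$.
\end{itemize}
If $af=cd=0$, then $a=f=c=d=0$ and only the edges $13$ and $24$ remain, which is disconnected. If instead $b=e=0$, the vanishing of $acdf=a^2c^2$ forces $a=0$ or $c=0$. Then only the pair of edges $\{14,23\}$ or $\{12,34\}$ survives, again disconnected. This contradiction gives $2|p(Q)|<m$.

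The main delicate point is the first step: justifying that $m(Q)$ equals the Markov value of $A$ in its topological class. This rests on the transport of the totally proper class by proper mutations together with its uniqueness (Theorem~\ref{thm:FN}). The remaining steps are elementary inequalities.
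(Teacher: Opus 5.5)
Your proof is correct and follows essentially the same route as the paper: you transport the totally proper class to the topological ordering of $A$ via Theorem~\ref{thm:FN}, drop the nonnegative cubic and quartic terms, and apply $2xy\le x^2+y^2$ pairwise. For strictness you, like the paper, use $a=f$, $b=e$, $c=d$, equality in the triangle inequality, and the vanishing of $acdf$ to reach a graph with at most two edges. Your remark that the ordered tuple is the fixed-label tuple of a relabeling $A^\pi$, so that $|p(A)|$ may be computed in ordered coordinates, makes explicit a step the paper leaves implicit.
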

The sum-of-squares bound in \eqref{eq:endpoint} is \cite[Proposition~3.21]{neville2025}, and the bound $m\ge3$ is the case $n=4$ of \cite[Corollary~6.6]{neville2025}.
The Pfaffian bounds are new.
For four vertices, \cite[Corollary~6.7]{neville2025} gives $p(Q)^2=\det B_Q\le(2m)^2$, which is $|p(Q)|\le 2m$, so \eqref{eq:endpoint} improves it by a factor of four.
We include the full proof because the strict inequality for connected quivers is used often in the appendices.
\begin{proof}
    Transport the totally proper class along a mutation sequence from $Q$ to $A$.
    By Theorem~\ref{thm:FN}, it becomes the class of a topological ordering of $A$ with the same Markov invariant $m$, and all six upper-triangular entries in that ordering are nonnegative.
    Every term on the right of \eqref{eq:markov-invariant} is then nonnegative, which gives the sum-of-squares bound, and since each entry squared is at most this sum, $\htq(A)\le\lfloor\sqrt m\rfloor$.
    Moreover,
    \[
        2|af-be+cd|\le 2(af+be+cd)\le a^2+f^2+b^2+e^2+c^2+d^2\le m,
    \]
    where the second inequality follows from repeatedly applying $2xy \leq x^2 + y^2,$ which is true because $(x-y)^2\geq 0$.

    Now suppose $Q$ is connected.
    Mutation preserves connectedness, so $A$ is connected, and in a connected quiver on four vertices at least three of the six pairs of vertices are joined.
    Hence at least three entries are positive integers and $m\ge3$.
    Since $|p(\cdot)|$ is invariant under mutation, $|p(Q)|=|p(A)|$, so it remains to show $2|p(A)|<m$.

    Suppose, for contradiction, that $2|p(A)|=m$.
    Then every inequality above becomes an equality.
    Equality in the second step forces $(a-f)^2 + (b-e)^2 + (c-d)^2=0$, so $a=f$, $b=e$, $c=d$.
    Equality in the triangle inequality gives $be=0$ or $af=cd=0$.
    In the last step, this forces every cubic and quartic term of \eqref{eq:markov-invariant} to vanish; in particular the quartic term containing $acdf$ vanishes, so $a$, $f$, $c$, $d$ cannot all be positive.
    This implies that at most one of the pairs $(a,f)$, $(b,e)$, $(c,d)$ is nonzero.
    The underlying graph of $A$ then has at most two edges, so $A$ is disconnected, a contradiction.
\end{proof}

\begin{remark}\label{rem:pfsharp}
    For each integer $k\ge1$, let $L_k$ be defined by $\qtuple{L_k}=(k,0,0,1,0,k)$.
    This is the connected acyclic quiver $1\xrightarrow{k}2\to3\xrightarrow{k}4$, with topological ordering $1<2<3<4$ and induced cyclic ordering $\omega=\corder{1,2,3,4}$.
    Direct substitution gives \claim{S2.Lk-markov, S2.Lk-pf}
    \[
        M_\omega(L_k)=2k^2+1,\qquad p(L_k)=k^2.
    \]
    Consequently, $M_\omega(L_k)-2|p(L_k)|=1$, the smallest possible positive integer gap in the strict inequality of Lemma~\ref{lem:endpoint}.
    Moreover,
    \[
        \frac{M_\omega(L_k)}{|p(L_k)|}=2+\frac{1}{k^2}\longrightarrow2.
    \]
    Thus, for every constant $c>2$, sufficiently large $k$ satisfies
    $M_\omega(L_k)<c|p(L_k)|$. The coefficient $2$ therefore cannot be increased uniformly, even when restricted to connected acyclic four-vertex quivers.
\end{remark}

Lemma~\ref{lem:endpoint} gives a finite obstruction test, which we name for later use.
We say that a connected four-vertex quiver $Q$ is \textbf{Markov-excluded} if it has at least one proper wiggle class, and for every proper wiggle class $[\omega]_{\mathrm{wig}}$ no connected acyclic four-vertex quiver $A$, with cyclic ordering $\omega_A$ induced by a topological ordering, has both $M_{\omega_A}(A)=M_\omega(Q)$ and $p(A)^2=p(Q)^2$.

\begin{lemma}\label{lem:exclusion}
    Let $Q$ be a connected four-vertex quiver.
    If $Q$ has no proper wiggle class, or if $Q$ is Markov-excluded, then $Q$ is mutation-cyclic.
    Whether $Q$ is Markov-excluded can be decided by a finite computation.
    For four-vertex quivers, having no proper wiggle class is equivalent to being a vortex, by \cite[Proposition~13.2]{fomin-neville2026}.
    The resulting obstruction to mutation-acyclicity is stated separately in Lemma~\ref{lem:vortex} below.
\end{lemma}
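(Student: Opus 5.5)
The plan proves the two obstructions separately, then the decidability claim.

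\emph{No proper wiggle class.} This case follows from Theorem~\ref{thm:FN}. A mutation-acyclic quiver has a totally proper wiggle class, and a totally proper class is in particular proper: it is proper after the empty sequence of proper mutations. So a quiver with no proper wiggle class cannot be mutation-acyclic.

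\emph{Markov-excluded.} Suppose, for contradiction, that $Q$ is connected, Markov-excluded and mutation-acyclic.
\begin{enumerate}
\item By Theorem~\ref{thm:FN}, $Q$ has a totally proper class $[\omega]_{\mathrm{wig}}$, which is proper.
\item Fix a mutation sequence from $Q$ to an acyclic quiver $A$. Transport $[\omega]_{\mathrm{wig}}$ along it by proper mutations, as in the proof of Lemma~\ref{lem:endpoint}. This is possible because the class remains proper at every step.
\item Theorem~\ref{thm:FN} identifies the transported class with the class $[\omega_A]_{\mathrm{wig}}$ induced by a topological ordering of $A$.
\item Invariance of the Markov invariant under proper mutations gives $M_{\omega_A}(A)=M_\omega(Q)$.
\item Invariance of $|p|$ under mutation gives $p(A)^2=p(Q)^2$.
\item Mutation preserves connectedness, so $A$ is a connected acyclic four-vertex quiver.
\end{enumerate}
Then $A$ witnesses a coincidence for the proper class $[\omega]_{\mathrm{wig}}$, contradicting the definition of Markov-excluded.

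\emph{Finite computation.} I would check decidability in three steps.
\begin{enumerate}
\item A four-vertex quiver has finitely many cyclic orderings: $3!=6$ up to rotation. For each one, properness of all four vertices is a finite check, so the proper wiggle classes and their values $M_\omega(Q)$ are computable.
\item Fix a proper class with value $M=M_\omega(Q)$. Any candidate $A$, read in its topological ordering, has nonnegative coordinates $(a,\dots,f)$. All terms of \eqref{eq:markov-invariant} are then nonnegative, so $a^2+\dots+f^2\le M$, as in Lemma~\ref{lem:endpoint}.
\item Hence only finitely many tuples with entries in $[0,\lfloor\sqrt M\rfloor]$ need checking. For each tuple, test connectedness, $M_{\omega_A}(A)=M$ and $p(A)^2=p(Q)^2$; each test is a finite calculation.
\end{enumerate}
One can restrict to tuples in the ordering $\corder{1,2,3,4}$: any acyclic $A$ can be relabeled so that its topological order is $1<2<3<4$, and relabeling preserves $p^2$ and the Markov value computed in the topological ordering.

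\emph{Vortex equivalence.} For four-vertex quivers, having no proper wiggle class is equivalent to being a vortex; this is quoted directly from \cite[Proposition~13.2]{fomin-neville2026}.

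The main subtle point is step 3 of the Markov-excluded argument: that the transported totally proper class is exactly the topological class of $A$, so that the Markov value carried along is the one attached to the acyclic endpoint. This is precisely the content of Theorem~\ref{thm:FN}, combined with the uniqueness of totally proper classes. With that identification, everything else is bookkeeping.
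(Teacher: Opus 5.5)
Your proposal is correct and follows the paper's proof. You use Theorem~\ref{thm:FN} for the first obstruction. For Markov exclusion you transport the totally proper class to an acyclic endpoint (the argument behind Lemma~\ref{lem:endpoint}) and add invariance of $p^2$ and preservation of connectedness. Finiteness comes from the nonnegative sum-of-squares bound.

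The one step the paper spells out that you only quote is that a vortex is complete, so each of its wiggle classes is a single cyclic ordering. That is what turns Fomin--Neville's statement about proper cyclic orderings into the stated equivalence about proper wiggle classes.
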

\begin{proof}
    If $Q$ were mutation-acyclic, Theorem~\ref{thm:FN} would give a totally proper class $[\omega]_{\mathrm{wig}}$, and Lemma~\ref{lem:endpoint} would give an acyclic quiver $A$ in the class of $Q$ with $M_{\omega_A}(A)=M_\omega(Q)$, where $\omega_A$ is induced by a topological ordering.
    Also, $p(A)^2=p(Q)^2$ by \eqref{eq:pf}, and $A$ is connected because mutation preserves components.
    Both hypotheses rule this possibility out.
    For the finiteness, put $m=M_\omega(Q)$ for a proper class $[\omega]_{\mathrm{wig}}$.
    By \eqref{eq:endpoint} every candidate tuple $\oqtuple{\omega_A}{A}$ is nonnegative with sum of squares at most $m$, so there are finitely many to check, and none if $m<3$.
\end{proof}

A match in the exclusion test does not prove mutation-acyclicity.
Neville uses the same exclusion idea in \cite[Section~6]{neville2025}.
A \textbf{vortex} is a complete four-vertex quiver consisting of an oriented triangle and a fourth vertex that is a source or a sink \cite[Definition~13.1]{fomin-neville2026}.
Fomin and Neville show that a four-vertex quiver has a proper cyclic ordering if and only if it is not a vortex \cite[Proposition~13.2]{fomin-neville2026}.
A vortex is complete, so each of its wiggle classes consists of a single cyclic ordering, and having no proper cyclic ordering is the same as having no proper wiggle class.
A quiver that is not a vortex has a proper cyclic ordering, whose wiggle class is proper.
This proves the equivalence stated in Lemma~\ref{lem:exclusion}.

\begin{lemma}[{\cite[Proposition~13.2]{fomin-neville2026}}]\label{lem:vortex}
    A vortex has no proper wiggle class.
    Hence, by Theorem~\ref{thm:FN}, every vortex is mutation-cyclic.
\end{lemma}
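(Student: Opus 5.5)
The plan is to verify directly that no cyclic ordering makes every vertex of a vortex proper, and then to pass from orderings to wiggle classes and apply Theorem~\ref{thm:FN}.

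Let $V$ be a vortex, with oriented triangle $x\to y\to z\to x$ (with some multiplicities) and fourth vertex $s$. Reversing all arrows together with the cyclic ordering preserves properness, so we may assume $s$ is a source, sending arrows to each of $x,y,z$.

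Since $V$ is complete, no two vertices are unjoined, so no wiggle applies. Each wiggle class is therefore a single cyclic ordering. A proper wiggle class would thus be one cyclic ordering $\omega$ in which all four vertices are proper simultaneously.

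We translate properness of each triangle vertex into a constraint on $\omega$, using the two-paths through it:
\begin{itemize}
\item The two-path $z\to x\to y$ shows that if $x$ is proper, then $z,x,y$ occur in clockwise order.
\item The two-path $s\to y\to z$ shows that if $y$ is proper, then $s,y,z$ occur clockwise.
\item The two-path $s\to z\to x$ shows that if $z$ is proper, then $s,z,x$ occur clockwise.
\item The two-path $s\to x\to y$ shows that if $x$ is proper, then $s,x,y$ occur clockwise.
\end{itemize}
The vertex $s$, being a source, lies in the middle of no two-path, so it is automatically proper.

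Next we locate $s$ on the circle. There are only two cyclic orderings of $\{x,y,z\}$. The constraint $\langle z,x,y\rangle$ says the triangle vertices are read $x,y,z$ clockwise. The vertex $s$ must then sit in one of the three gaps: between $x$ and $y$, between $y$ and $z$, or between $z$ and $x$.
\begin{itemize}
\item If $s$ lies between $z$ and $x$, the ordering is $\langle x,y,z,s\rangle$. Reading $s,y,z$ in it gives the cyclic order $\langle s,y,z\rangle$, which is clockwise, so $y$ is proper. However, $s,z,x$ appears as $z,s,x$, which is the reverse of the required order. Hence $z$ is improper.
\item The other two placements should similarly violate one of the constraints.
\end{itemize}
The one real step is this finite check. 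There are $3$ placements, each tested against $3$ cyclic triple conditions.

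The conceptual point behind the check is as follows. Each triangle vertex $v$ requires $s$ to lie in the arc just before $v$'s predecessor pattern. Together these three conditions require $s$ to lie in all three gaps at once, which is impossible.

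If the other orientation $\langle x,z,y\rangle$ of the triangle vertices is chosen instead, then $x$ is already improper by the first constraint. Thus no cyclic ordering is proper, and $V$ has no proper wiggle class.

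Finally, if $V$ were mutation-acyclic, Theorem~\ref{thm:FN} would provide a totally proper wiggle class. Such a class is in particular proper, which contradicts what we have just shown. Hence every vortex is mutation-cyclic.

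The main obstacle is bookkeeping of the clockwise convention. I would first write out the three gap placements explicitly and check each triple condition by hand. This should match \cite[Proposition~13.2]{fomin-neville2026}, which can alternatively be cited outright.
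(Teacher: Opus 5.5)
Your proof is correct, and its skeleton matches the paper's. A vortex is complete, so no wiggle applies and every wiggle class is a single cyclic ordering. A proper class would therefore be one ordering in which all four vertices are proper. Theorem~\ref{thm:FN} then rules out mutation-acyclicity, since a totally proper class is in particular proper.

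The only real difference is in the middle step. The paper cites \cite[Proposition~13.2]{fomin-neville2026} for the absence of a proper cyclic ordering, whereas you check it directly. Your reduction to a source by reversing arrows together with the ordering is legitimate, since properness is preserved and a vortex stays a vortex. Your check uses only the easy direction of that proposition and only the signs of the entries, so the lemma becomes self-contained. The citation, by contrast, also supplies the converse (a non-vortex has a proper ordering), which the paper uses elsewhere, for instance in the equivalence stated in Lemma~\ref{lem:exclusion}.

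Two small corrections to the write-up. First, the placements you left unchecked do fail as expected. In $\langle x,s,y,z\rangle$ the vertices read $\langle s,y,x\rangle$, violating the condition $\langle s,x,y\rangle$ at $x$. In $\langle x,y,s,z\rangle$ they read $\langle s,z,y\rangle$, violating the condition $\langle s,y,z\rangle$ at $y$. Second, your ``conceptual point'' is stated backwards. Once $\langle x,y,z\rangle$ is fixed, the conditions at $x$, $y$, $z$ each \emph{forbid} $s$ from one gap: between $x$ and $y$, between $y$ and $z$, and between $z$ and $x$, respectively. Together they leave no gap for $s$; it is not that $s$ must lie in all three gaps at once.
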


The following fact holds in every rank and is used twice below.

\begin{lemma}[{{\cite[Corollary~4 and its proof]{caldero2006}}}]\label{lem:acyclicreps}
    Let $A$ and $B$ be mutation-equivalent acyclic quivers of the same rank.
    Then $B$ can be reached from $A$, up to relabeling of vertices, by a sequence of mutations at sources and sinks.
    Consequently $A$ and $B$ have the same underlying weighted graph up to relabeling, and a mutation class contains only finitely many acyclic quivers.
\end{lemma}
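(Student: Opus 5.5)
The plan is to prove the first assertion from the Caldero--Keller theory of acyclic seeds, and then to derive the two consequences from it together with rank-invariance of mutation.

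\emph{Reduction to cluster-tilting theory.} Take $k=\mathbb C$ and let $\mathcal C_A$ be the cluster category of the path algebra $kA$. By \cite{BMRT2007}, clusters of the cluster algebra $\mathcal A(A)$ correspond to basic cluster-tilting objects $T$ of $\mathcal C_A$, and the seed at a cluster has exchange matrix given by the quiver of $\operatorname{End}_{\mathcal C_A}(T)$. Mutation of seeds corresponds to replacing one indecomposable summand of $T$. Since $B$ is mutation-equivalent to $A$, it is the quiver of $\operatorname{End}(T)$ for some cluster-tilting object $T$ reached from $kA$ by the corresponding sequence of mutations.

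\emph{Acyclic seeds are slices.} Caldero and Keller prove that if $\operatorname{End}(T)$ has an acyclic quiver, then $T$ is, up to the autoequivalence $\tau$ of $\mathcal C_A$, the image of a slice in the derived category $D^b(kA)$; equivalently, $T$ is a tilting object in the heart of some reflected hereditary structure. This is the content of their Corollary~4 and its proof. Consequently $\operatorname{End}(T)\cong kA'$, where $A'$ is obtained from $A$ by a finite sequence of APR reflections, that is, reflections at sinks or sources. I would carry this step out in three parts.
\begin{enumerate}
\item Move $T$ by a power of $\tau$ so that it lies in the fundamental domain.
\item Use acyclicity of $\operatorname{End}(T)$ to show that $T$ has no summands forcing oriented cycles. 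This rules out summands spread across two $\tau$-orbits in a non-slice way.
\item Identify the slice with a sequence of sink and source reflections.
\end{enumerate}
Mutation of a seed at a source or sink of the quiver is precisely reversing the arrows at that vertex, matching an APR reflection. So $B\cong A'$, reached from $A$ up to relabeling by mutations at sources and sinks.

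\emph{Consequences.} Reversing all arrows at a source or sink leaves the underlying weighted graph unchanged, so $A$ and $B$ have the same weighted graph up to relabeling. A fixed weighted graph on $n$ vertices admits at most $2^{\binom n2}$ orientations, and there are at most $n!$ relabelings. Hence there are only finitely many acyclic quivers in the class.

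\emph{Main obstacle.} The delicate step is the slice characterization: showing that acyclicity of the endomorphism quiver forces $T$ to be a slice. This is where I rely wholesale on Caldero--Keller, as the lemma's citation indicates. In writing, I would quote their result rather than reprove it, and verify only the dictionary between APR reflections and source/sink mutation.
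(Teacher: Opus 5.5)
Your argument is correct, but only in the same sense as the paper's: both rest on a result of Caldero--Keller. The difference is which part of their work you quote.

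The paper quotes only that the acyclic seeds form a connected subgraph of the exchange graph. It then adds an elementary local fact: a mutation between two acyclic quivers must be at a source or a sink. Indeed, if $i\to k\to j$ in an acyclic quiver, there is no arrow $j\to i$, so $\mu_k$ creates arrows $i\to j$, $j\to k$, $k\to i$, which is a directed triangle. Walking along a path of acyclic seeds from $A$ to $B$ then gives the source/sink sequence directly. Every intermediate quiver is acyclic, and no further categorical input is needed.

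You instead quote the structural statement that an acyclic cluster-tilting object comes from a slice, and then translate slices into APR reflections. This describes all acyclic seeds at once and gives reflection equivalence in one step, so you never need the local fact. The cost is the full cluster-category dictionary and a more delicate citation.

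Two cautions on the slice step. First, your three-part sketch is not how that step is proved. Step~1 is vacuous, since every object of $\mathcal C_A$ already has a representative in the fundamental domain, and step~2 is not an argument. If you want to justify the step rather than quote it, use this route:
\begin{enumerate}
\item A cluster-tilted algebra with acyclic quiver has no relations, so $\operatorname{End}(T)$ is hereditary with quiver $B$.
\item Hence $kA$ and $kB$ are derived equivalent.
\item Happel's theorem on derived-equivalent hereditary path algebras, applied componentwise, then shows that $B$ is obtained from $A$, up to relabeling, by reflections at sinks and sources.
\end{enumerate}

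Second, ``a tilting object in the heart of some reflected hereditary structure'' is weaker than being a slice, because a tilting module need not be one. What you need is that $T$ is the projective generator of such a heart.

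Your derivation of the two consequences (same weighted graph up to relabeling, and finiteness via orientations and relabelings) is fine.
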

Caldero and Keller show that the seeds with acyclic exchange matrix form a connected subgraph of the exchange graph \cite[Corollary~4]{caldero2006}, and the statement about sources and sinks is in its proof; for acyclic seeds see also \cite{BMRT2007}.
A mutation between two acyclic quivers is necessarily at a source or a sink.
Indeed, if a vertex $k$ of an acyclic quiver is neither a source nor a sink, there is a path $i\to k\to j$.
There can be no arrow $j\to i$, since that would already give a directed cycle.
Mutation at $k$ therefore leaves a positive number of arrows $i\to j$, together with the reversed arrows $j\to k\to i$, producing a directed triangle.

\section{An alternating family}\label{sec:family}

For $t\in\Z$, let
\begin{equation}\label{eq:family}
    \qtuple{Q(t)}=(-2,t+2,t,-t,2-t,0) = (-2,2,0,0,2,0)+t(0,1,1,-1,-1,0),
\end{equation}
where negative labels in the following diagram mean reversed arrows and zero labels mean omitted arrows:
\[Q(t) = \begin{tikzcd}
    & 1 & \\
    3 & & 4 \\
    & 2 &
    \arrow[from=3-2, to=1-2, "2" description]
    \arrow[from=1-2, to=2-1, "t+2"']
    \arrow[from=1-2, to=2-3, "t"]
    \arrow[from=2-1, to=3-2, "t"']
    \arrow[from=2-3, to=3-2, "t-2"]
    \end{tikzcd}\]
Each $Q(t)$ is connected, since the edge between vertex $1$ and $2$ is always present, vertex $3$ is joined to $1$ unless $t=-2$ and to $2$ unless $t=0$, and vertex $4$ is joined to $1$ unless $t=0$ and to $2$ unless $t=2$.

\begin{lemma}\label{lem:recurrence}
    For $t\ge2$, mutating $Q(t)$ at vertex $2$ gives $Q(t-2)^{\pi_{12}}$.
\end{lemma}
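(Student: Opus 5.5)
The plan is a direct computation with the mutation rule \eqref{eq:mutation} in the fixed coordinates $12,13,14,23,24,34$, followed by a comparison with the relabeled tuple. Nothing beyond the definitions of $\mu_k$ and $Q^\pi$ from Section~\ref{sec:background} is needed. The hypothesis $t\ge2$ enters only through the signs of the entries in row and column $2$, since these decide which positive parts $[\,\cdot\,]_+$ vanish.

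First I record the entries of $Q(t)$ that involve vertex $2$: $b_{21}=2$, $b_{23}=-t$, $b_{24}=2-t$. For $t\ge2$ these give $b_{21}>0$, $b_{23}\le0$ and $b_{24}\le0$. So vertex $2$ has the single outgoing neighbor $1$ (with $2$ arrows) and incoming arrows from $3$ (multiplicity $t$) and from $4$ (multiplicity $t-2$). The entries touching $2$ just change sign: $b'_{12}=2$, $b'_{23}=t$, $b'_{24}=t-2$. For the other pairs, only two-paths $i\to2\to1$ contribute. In the formula this is the term $-[-b_{i2}]_+[-b_{21}]_+$ read with $j=1$, or equivalently $[b_{i2}]_+[b_{21}]_+$ added to $b_{i1}$. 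This gives
\[
    b'_{13}=(t+2)-2t=2-t,\qquad b'_{14}=t-2(t-2)=4-t,\qquad b'_{34}=0.
\]
The last entry is $0$ because no two-path between $3$ and $4$ passes through $2$: both $3$ and $4$ are in-neighbors of $2$. Hence $\qtuple{\mu_2Q(t)}=(2,\,2-t,\,4-t,\,t,\,t-2,\,0)$.

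Next I compute $\qtuple{Q(t-2)^{\pi_{12}}}$ from $\qtuple{Q(t-2)}=(-2,\,t,\,t-2,\,2-t,\,4-t,\,0)$ using $b_{ij}(Q^{\pi})=b_{\pi(i)\pi(j)}$. With $\pi_{12}=(1\,2)$ this gives $b_{12}\mapsto b_{21}=2$, $b_{13}\mapsto b_{23}=2-t$, $b_{14}\mapsto b_{24}=4-t$, $b_{23}\mapsto b_{13}=t$, $b_{24}\mapsto b_{14}=t-2$, $b_{34}\mapsto b_{34}=0$. This agrees coordinatewise with $\mu_2Q(t)$, which proves the lemma.

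The only real obstacle is sign bookkeeping. I have to make sure, for $t\ge2$, that the term $[b_{32}]_+[b_{24}]_+$ in $b'_{34}$ vanishes, and that no stray term appears in $b'_{13}$ or $b'_{14}$. Both follow from $b_{24}=2-t\le0$, with equality at $t=2$ being harmless. This is exactly where $t\ge2$ is needed. As the paper does for other identities, I would also have \file{check\_proofs.py} verify the polynomial identity symbolically in $t$.
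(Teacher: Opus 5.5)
Your proof is correct and is the same direct computation as the paper's: you obtain $\qtuple{\mu_2Q(t)}=(2,2-t,4-t,t,t-2,0)$ and match it with the relabeled tuple. The only difference is that the paper applies $\pi_{12}$ to $\mu_2Q(t)$ and compares with $Q(t-2)$, which is equivalent since $\pi_{12}$ is an involution. One small slip in your explanation: the term $-[-b_{i2}]_+[-b_{21}]_+$ vanishes because $b_{21}=2>0$, so the contributing term is $-[-b_{12}]_+[-b_{2j}]_+$ in $b'_{1j}$, or equivalently your $[b_{i2}]_+[b_{21}]_+$ added to $b_{i1}$; your displayed values are nonetheless right.
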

\begin{proof}
    Mutation at $2$ gives \claim{S3.recurrence}
    \[
        \qtuple{\mu_2Q(t)}=(2,2-t,4-t,t,t-2,0).
    \]
    Swapping the first two labels gives
    \[
        \qtuple{(\mu_2Q(t))^{\pi_{12}}}=(-2,t,t-2,2-t,4-t,0)=\qtuple{Q(t-2)}.\qedhere
    \]
\end{proof}

\begin{theorem}\label{thm:parity}
    The four-vertex quiver $Q(t)$ is mutation-acyclic if and only if $t<0$ or $t$ is even.
\end{theorem}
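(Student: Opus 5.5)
The plan is to use Lemma~\ref{lem:recurrence} to reduce every $t\ge0$ to the two base cases $t=0$ and $t=1$, and to treat $t<0$ directly. By Lemma~\ref{lem:recurrence}, $Q(t)$ is mutation equivalent to $Q(t-2)^{\pi_{12}}$ for $t\ge2$. Relabeling preserves acyclicity and commutes with mutation via \eqref{eq:tailtransport}, so relabeling preserves mutation-acyclicity. Induction on $t$ therefore shows that for $t\ge0$, $Q(t)$ is mutation-acyclic if and only if $Q(t\bmod 2)$ is. Lemma~\ref{lem:recurrence} is only stated for $t\ge2$; it does not extend to $t=1$ because the sign pattern changes, and I will not try to extend it.

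\textbf{Nonnegative even $t$ and negative $t$.} Here it suffices to show that the quivers are already acyclic. For $t=0$ we have $\qtuple{Q(0)}=(-2,2,0,0,2,0)$. Its arrows are $2\to1$, $1\to3$ and $2\to4$, each of multiplicity $2$, so it is visibly acyclic. For $t=-s$ with $s>0$ we have $\qtuple{Q(-s)}=(-2,2-s,-s,s,2+s,0)$. In this quiver vertex $2$ is a source, with arrows $2\to1$, $2\to3$ and $2\to4$. Among the remaining vertices $1,3,4$ there are only the arrow $4\to1$ and an arrow between $1$ and $3$, in one direction or absent, while $b_{34}=0$. Any directed cycle would have to lie in $\{1,3,4\}$, which has no cycle. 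So $Q(t)$ is acyclic for every $t<0$.

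\textbf{Positive odd $t$.} It remains to show that $Q(1)$ is mutation-cyclic. We have $\qtuple{Q(1)}=(-2,3,1,-1,1,0)$, with arrows $2\xrightarrow{2}1$, $1\xrightarrow{3}3$, $1\to4$, $3\to2$ and $2\to4$. Because of Remark~\ref{rem:familymarkov}, neither the Markov exclusion test nor a mutation-cyclic full triangle is expected to work here. I would instead search for a short mutation to a vortex and then invoke Lemma~\ref{lem:vortex}.

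The natural candidate is $\mu_2$. Vertex $2$ has incoming arrow $3\to2$ and outgoing arrows $2\xrightarrow{2}1$ and $2\to4$. The path $3\to2\to1$ contributes two arrows $3\to1$, which partially cancel the three arrows $1\to3$ and leave one arrow $1\to3$. The path $3\to2\to4$ creates one arrow $3\to4$. After reversing the arrows at $2$, the result is the complete quiver $\qtuple{\mu_2Q(1)}=(2,1,1,1,-1,1)$ \claim{S3.vortex}. In this quiver vertex $1$ is a source, with arrows $1\to2$, $1\to3$ and $1\to4$, and $2\to3\to4\to2$ is an oriented triangle. This is exactly a vortex, so by Lemma~\ref{lem:vortex} $\mu_2Q(1)$ is mutation-cyclic, and hence so is $Q(1)$.

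The main obstacle is this last base case. Mutation-cyclicity of $Q(1)$ is invisible to the determinant, to the Markov invariant, and to the rank-three criterion of Proposition~\ref{prop:rank3}, since every full triangle of $Q(1)$ contains a single arrow. The proof therefore depends on finding an explicit vortex in the mutation class. The computation above suggests that a single mutation suffices. I would confirm it with the exact mutation formula \eqref{eq:mutation} as a delegated check.
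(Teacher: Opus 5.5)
Your proof is correct and follows the paper's own argument. You reduce $t\ge0$ to $t\in\{0,1\}$ via Lemma~\ref{lem:recurrence}, note that $Q(t)$ is acyclic for $t<0$ (vertex $2$ is a source and there is no $3$--$4$ arrow) and for $t=0$, and show that $\qtuple{\mu_2Q(1)}=(2,1,1,1,-1,1)$ is a vortex, hence mutation-cyclic by Lemma~\ref{lem:vortex}. Your uniform hand argument for all $t<0$ is slightly more explicit than the paper's, which delegates the small negative cases to a computer check.
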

\begin{proof}
    For $t<0$, vertex $2$ is a source and there is no arrow between vertices $3$ and $4$.
    Hence $Q(t)$ is acyclic; for $t\le-3$ this is \claim{S3.Qneg}, and \file{examples.py} checks $t=-2,-1$.

    For $t\ge0$, repeatedly apply Lemma~\ref{lem:recurrence}.
    Each step subtracts two from $t$ after relabeling, which preserves mutation-acyclicity.
    So we only need to understand $Q(0)$ and $Q(1)$.

    The even endpoint $Q(0)$, with $\qtuple{Q(0)}=(-2,2,0,0,2,0)$, is acyclic.
    For the odd endpoint, one mutation gives
    \[
        \qtuple{\mu_2Q(1)}=(2,1,1,1,-1,1).
    \]
    Here vertex $1$ is a source, while $2\to3\to4\to2$ is a directed triangle.
    Every pair of vertices is joined, so this is a vortex.
    Lemma~\ref{lem:vortex} proves that $Q(1)$ is therefore mutation-cyclic.
\end{proof}

\begin{remark}\label{rem:familymarkov}
    For every $t\ge0$, the quiver $Q(t)$ has exactly one proper wiggle class, and its Markov value is $12$.
    For $t\ge3$ this class contains $\corder{1,3,4,2}$ \claim{S3.Qt-class, S3.Qt-markov}, and for $t\le2$ it is a direct check.
    Also $p(Q(t))=-4$ for every $t$ \claim{S3.Qt-pf}.
    The connected acyclic quiver $Q(0)$ has the same pair $(M,p^2)=(12,16)$ in its topological ordering.
    Hence no $Q(t)$ is Markov-excluded, and the even and odd members of the family are not distinguished by the Markov invariant, by the determinant, or by properness.
    The odd members become detectable after mutation.
    By Lemma~\ref{lem:recurrence}, $s$ mutations take $Q(2s+1)$ to a relabeling of $Q(1)$, and one more mutation gives a vortex.
\end{remark}

\begin{remark}\label{rem:rank}
    For $n>4$, build $Q_n(t)$ from $Q(t)$ by adding the vertices $5,\dots,n$ and the arrows
    \[
        n\longrightarrow n-1\longrightarrow\cdots\longrightarrow5\longrightarrow3,
    \]
    with the convention that for $n=5$ the only added arrow is $5\to3$.
    In particular, no arrow $4\to3$ is added, and the full subquiver on $\{1,2,3,4\}$ is $Q(t)$.
    The resulting quiver is connected.
    None of the added vertices is adjacent to vertex $2$, so mutation at $2$ leaves all the added arrows unchanged, as does the swap of labels $1,2$.
    Consequently, the recurrence of Lemma~\ref{lem:recurrence} sends $Q_n(t)$ to $Q_n(t-2)$ for $t\ge2$.

    For $t<0$, the original quiver is acyclic, and attaching a chain at a single vertex creates no directed cycle.
    The same argument applies to the even endpoint $Q_n(0)$.
    For the odd endpoint, the full subquiver on $\{1,2,3,4\}$ is the mutation-cyclic quiver $Q(1)$, so Proposition~\ref{prop:rank3} rules out mutation-acyclicity of $Q_n(1)$.
    It follows that $Q_n(t)$ is mutation-acyclic exactly when $t<0$ or $t$ is even.
    We also write $Q_4(t)=Q(t)$.
\end{remark}

\subsection{Polynomial sign tests}\label{sec:signs}

We now prove Theorem~\ref{thm:nogo}.
The point is that finitely many polynomial tests eventually see no change along the family, whereas mutation-acyclicity continues to alternate.

\begin{proof}[Proof of Theorem~\ref{thm:nogo}]
    Fix $n\ge4$, and suppose for contradiction that such a rule exists for quivers on $n$ vertices.
    Move all terms of each polynomial equation/inequality to one side, so that it has the form $P=0$, $P>0$, or $P<0$ for some polynomial $P$.
    Because the rule is fixed and finite, there are finitely many polynomials
    \[
        P_1,\dots,P_r\in\mathbb{R}[b_{ij}:1\le i<j\le n]
    \]
    whose zero values and signs determine its answer.
    Combining these tests with the Boolean operations \emph{and}, \emph{or}, and \emph{not} uses no information beyond this finite list of signs.
    If there are no tests, the rule is constant and already cannot distinguish the even and odd members of the family.

    Now apply the rule only to the connected integral quivers $Q_n(t)$ of Remark~\ref{rem:rank}, for $t=0,1,2,\dots$.
    For each $\ell$, the value tested by the rule is
    \[
        q_\ell(t):=P_\ell\bigl(B_{Q_n(t)}\bigr).
    \]
    Every entry of $B_{Q_n(t)}$ is an affine function of $t$, since the entries of $Q(t)$ are affine in $t$ by \eqref{eq:family} and the added entries are constant.
    So $q_\ell$ is an ordinary one-variable real polynomial.
    There are exactly two possibilities:
    \begin{enumerate}[label=(\roman*)]
        \item If $q_\ell$ is the zero polynomial, then $q_\ell(t)=0$ for every $t$.
        Its sign is always zero, so all tests involving $P_\ell$ have constant answers on the family.
        \item If $q_\ell$ is not the zero polynomial, it has only finitely many real roots.
        Choose a real number $R_\ell$ larger than all of them, taking $R_\ell=0$ if there are none.
        The polynomial never vanishes on the interval $(R_\ell,\infty)$.
        It is either positive everywhere on that interval or negative everywhere there.
        Otherwise continuity and the intermediate value theorem would give another root between two points of opposite signs.
        Thus its sign, and every test involving $P_\ell$, is constant for $t>R_\ell$.
    \end{enumerate}
    Since there are only finitely many $\ell$, we can choose a single integer $N\geq2$ larger than every $R_\ell$ needed in case~(ii).
    For all integers $t\geq N$, the entire sign vector
    \[
        \bigl(\operatorname{sgn}q_1(t),\dots,\operatorname{sgn}q_r(t)\bigr)
    \]
    is constant.
    In particular, every individual test has the same truth value on every such $Q_n(t)$ with $t\geq N$.
    Applying the same fixed Boolean combination to the same truth values must give the same final answer.

    Choose an even integer $2s\ge N$.
    The proposed rule therefore gives the same answer on $Q_n(2s)$ and $Q_n(2s+1)$.
    But Theorem~\ref{thm:parity} and Remark~\ref{rem:rank} say that $Q_n(2s)$ is mutation-acyclic and $Q_n(2s+1)$ is mutation-cyclic.
    The rule must be wrong on at least one of these two connected integral quivers.
    This contradicts its assumed correctness on every input, proving the theorem.
\end{proof}

\begin{remark}\label{rem:signscope}
    Equivalently, for each $n\ge4$, no semialgebraic subset of $\mathbb{R}^{\binom n2}$ agrees with mutation-acyclicity on all integral $n$-vertex quivers.
    This assertion concerns the integral points; no notion of mutation-acyclicity for real weights is needed.
    It also rules out fixed finite piecewise-polynomial rules with semialgebraic branches, and rational sign tests after denominator signs and zero denominators are accounted for.

    Arithmetic tests are a different story.
    For example, the greatest common divisor of the entries of the exchange matrix is a mutation invariant \cite[Section~2.8]{FWZ}, and more generally, for skew-symmetric matrices mutation is an integral congruence, so the whole Smith normal form is invariant \cite[Section~1]{casals2024}.
    For the family $Q(t)$ the greatest common divisor equals $\gcd(2,t)$, which distinguishes positive even and positive odd parameters.
    Finer arithmetic invariants include the binary invariant of Casals \cite{casals2024} and the congruence invariants of Seven and \"Unal \cite{seven-unal2025}.
    The theorem does not exclude such tests, or the finite decision algorithm proved below.
\end{remark}

\begin{corollary}\label{cor:nobound}
    There is no uniform bound on the length of a shortest mutation sequence from a mutation-acyclic four-vertex quiver to an acyclic quiver.
\end{corollary}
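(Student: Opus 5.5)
The plan is to use the even members $Q(2s)$ of the alternating family. By Theorem~\ref{thm:parity} each of them is mutation-acyclic, and each has height $\htq(Q(2s))=2s+2$, which grows without bound. The idea is to show that any mutation sequence from $Q(2s)$ to an acyclic quiver must be long, because every single mutation can lower the height by only a bounded amount while the final acyclic quiver has uniformly bounded height.

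The first step is to bound the target. The quiver $Q(2s)$ is connected and mutation-acyclic, and by Remark~\ref{rem:familymarkov} its unique proper wiggle class has Markov value $12$. That class must therefore be the totally proper one, so $m(Q(2s))=12$. Lemma~\ref{lem:endpoint} then says that every acyclic quiver $A$ in the mutation class of $Q(2s)$ has $\htq(A)\le\lfloor\sqrt{12}\rfloor=3$.

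The second step is a one-step height estimate. If $h=\htq(Q)$, then by \eqref{eq:mutation} every entry of $\mu_kQ$ is bounded by $h+h^2$; that is an upper bound, and what we actually need is a lower bound on the drop. Running the inequality backwards handles this: mutation is an involution, so $\htq(Q)\le\htq(\mu_kQ)+\htq(\mu_kQ)^2$. Hence if $\htq(\mu_kQ)=h'$, then $\htq(Q)\le h'+h'^2$, so $h'\ge(\sqrt{4h+1}-1)/2$. Iterating, $r$ mutations starting from height $h$ can reach height at most $3$ only if $h\le g^{(r)}(3)$, where $g(x)=x+x^2$. Since $g^{(r)}(3)$ is finite for each fixed $r$ and the heights $2s+2$ are unbounded, any uniform bound $r$ on the length would be contradicted by choosing $s$ with $2s+2>g^{(r)}(3)$.

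No step here is really an obstacle. The only point needing care is justifying $m(Q(2s))=12$ through the uniqueness of the proper class, which Remark~\ref{rem:familymarkov} together with Theorem~\ref{thm:FN} supplies. As a fallback one can avoid the Markov invariant entirely by invoking Lemma~\ref{lem:acyclicreps}: all acyclic quivers mutation-equivalent to $Q(0)$ share its underlying weighted graph, so their height is exactly $2$. The argument then goes through with $3$ replaced by $2$.
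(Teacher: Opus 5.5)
Your proof is correct, but it takes a different route from the paper.

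The paper's argument is a pure counting argument:
\begin{itemize}
\item by Lemma~\ref{lem:recurrence}, the infinitely many distinct quivers $Q(2s)$ lie in one mutation class up to relabeling;
\item Lemma~\ref{lem:acyclicreps} makes the acyclic quivers in that class finitely many;
\item from finitely many endpoints, paths of length at most $L$ reach only finitely many quivers, since there are four mutation vertices at each step;
\item so no single $L$ covers every $Q(2s)$.
\end{itemize}

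You argue quantitatively through heights instead. Acyclic quivers in the class have uniformly bounded height: at most $3$ via $m(Q(2s))=12$ and Lemma~\ref{lem:endpoint}, or exactly $2$ via Lemma~\ref{lem:acyclicreps}. One mutation sends height $h$ to at most $h+h^2$, and since mutation is an involution it cannot go below roughly $\sqrt h$. Meanwhile $\htq(Q(2s))=2s+2$ is unbounded.

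\textbf{What your route buys.} It gives an explicit lower bound for each individual member. Any route from $Q(2s)$ to an acyclic quiver needs at least the least $r$ with $g^{(r)}(3)\ge 2s+2$, which is of order $\log\log s$. The paper's pigeonhole says nothing about a given $s$. Counted out, it only shows that the worst member among $Q(0),\dots,Q(2S)$ needs length of order $\log S$, because at most $O(4^L)$ quivers lie within $L$ steps of the acyclic ones.

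\textbf{What the paper's route buys.} It needs no height estimate and no Markov invariant, only the distinctness of the $Q(2s)$. It applies verbatim to any infinite mutation class containing finitely many acyclic quivers.

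\textbf{Higher rank.} Both arguments extend to every rank $n\ge4$ through $Q_n(t)$. The estimate $|b'_{ij}|\le h+h^2$ does not depend on rank, so your fallback through Lemma~\ref{lem:acyclicreps} works there as well. Your primary route through Lemma~\ref{lem:endpoint} and Remark~\ref{rem:familymarkov} is specific to rank four. It also relies on the computer-checked claim that each $Q(t)$ has a single proper wiggle class, which your fallback avoids.
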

\begin{proof}
    By Lemma~\ref{lem:recurrence}, the distinct quivers $Q(2s)$, $s\ge0$, lie in the mutation class of $Q(0)$ up to relabeling.
    By Lemma~\ref{lem:acyclicreps}, that class contains only finitely many acyclic quivers, and there are finitely many relabelings.
    From finitely many endpoints, at most $L$ mutations can reach only finitely many quivers, because at each step there are only four mutation vertices to choose from.
    Reversing these paths shows that a fixed length bound $L$ cannot cover the infinitely many distinct quivers $Q(2s)$.
\end{proof}

\begin{remark}\label{rem:nogorank}
    Corollary~\ref{cor:nobound} holds for connected integral quivers of every rank $n\ge4$, with the same proof applied to the family $Q_n(t)$ of Remark~\ref{rem:rank}, since Lemma~\ref{lem:acyclicreps} supplies the finiteness of acyclic representatives in every rank.
    The results that follow, by contrast, use the four-vertex Markov invariant \eqref{eq:markov-invariant} in an essential way, and therefore apply to rank four only.
\end{remark}

\section{Local replacements along mutation sequences}\label{sec:local}

This section sets up the technical input for the height bound of Theorem~\ref{thm:height}.
The guiding idea is that whenever a mutation sequence climbs above a certain height level and later comes back down, the stretch spent above that level (a ``\textbf{high segment},'' defined below) can be rerouted below it.
The three lemmas here justify this locally by following how the first two mutation vertices of the high segment are joined.
With no edge between them the two mutations commute; with one arrow the pentagon relation supplies detours; with at least two arrows an attempted detour is forced to keep growing.
These three regimes mirror the behavior of ice quivers with exactly two mutable vertices.
Soukup shows that the mutation class of such an ice quiver has at most four quivers when the two mutable vertices are not joined and at most ten when they are joined by a single arrow, while in any nontrivial case the arrow multiplicities grow without bound when they are joined by two or more arrows \cite[Theorem~1.3]{soukup2025}.
Here the additional task is to control the height along a mutation sequence (what we will call a \textbf{route}) and the possibility of mutating at a third vertex.
The proofs are case analyses, given in Appendices~\ref{app:lemmas} and~\ref{app:signs}, and every calculation in them is verified by the program \file{check\_proofs.py}.

\subsection{The setup and supporting lemmas}

Throughout this section and Appendices~\ref{app:lemmas} and~\ref{app:signs}, fix a connected mutation-acyclic four-vertex quiver $Q_*$, let $m=m(Q_*)$, fix an integer $T\ge m$, and put $F=T+1$.
All quivers considered below are mutation-equivalent to $Q_*$.
Call a quiver \textbf{low} if its height is at most $T$ and \textbf{high} if its height is exactly $F$.
The routes we choose stay within height $F$, so each quiver on them is low or high; quivers of larger height appear only as intermediate quivers of alternative routes that we compare.
A \textbf{high segment} is a maximal run of consecutive high quivers on a route.
If it is preceded and followed by low quivers, its associated \textbf{excursion word} starts with the mutation from the low predecessor to the first high quiver and ends with the mutation from the last high quiver to the low successor.
Thus a segment containing $r$ high quivers has an excursion word of length $r+1$.
When we describe the mutation vertices or replace the word of a high segment, we always mean this excursion word.

\begin{lemma}\label{lem:prefix}
    Let $R$ be mutation-equivalent to $Q_*$, and let $p=p(R)$.
    Then
    \begin{equation}\label{eq:prefixscalar}
        3\le m\le T,\qquad 2|p|<m.
    \end{equation}
    In particular $T\ge3$ and $F\ge4$.
    These bounds are unchanged by relabeling $R$ and by reversing all arrows of $R$ (together with its cyclic ordering).
\end{lemma}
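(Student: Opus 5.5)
The plan is to read everything off Lemma~\ref{lem:endpoint} together with the standing hypotheses of this section. The quiver $Q_*$ is connected and mutation-acyclic, and $R$ is mutation-equivalent to $Q_*$. So $R$ is itself connected, since mutation preserves the vertex sets of connected components, and it is mutation-acyclic. Its mutation class is the class of $Q_*$, so $m(R)=m(Q_*)=m$, because $m$ is an invariant of the mutation class. Applying the last sentence of Lemma~\ref{lem:endpoint} to $R$ then gives $m\ge3$ and $2|p(R)|<m$. The inequality $m\le T$ is the standing choice $T\ge m$. Consequently $T\ge3$, and $F=T+1\ge4$. This proves \eqref{eq:prefixscalar}.

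For the invariance claims, I will check each quantity separately. The number $m$ depends only on $Q_*$, and $T$ is fixed, so neither changes. For $p$, relabeling gives $p(R^\pi)=\operatorname{sgn}(\pi)p(R)$, and reversal gives $p(R^{\mathrm{op}})=-p(R)$ because $B_{R^{\mathrm{op}}}=-B_R$ and $p$ is linear in the entries. In both cases $|p|$ is unchanged.

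The one point needing care is that $R^\pi$ and $R^{\mathrm{op}}$ need not be mutation-equivalent to $Q_*$ itself, so the bounds must be justified directly for them. I will note that $R^\pi$ and $R^{\mathrm{op}}$ are connected and mutation-acyclic, because relabeling and reversal commute with mutation and preserve acyclicity. Their value of $m$ is still $m$. For relabeling this holds because the totally proper class is transported by the relabeling and $M_\omega$ is computed from the same ordered entries. For reversal it holds because passing to the opposite COQ preserves $M_\omega$, and by uniqueness in Theorem~\ref{thm:FN} the reversed class is the totally proper one. With these facts, Lemma~\ref{lem:endpoint} applies verbatim to the transformed quivers.

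I expect no real obstacle beyond this bookkeeping about $m$ under reversal. That step rests on the cited opposite-COQ invariance together with the uniqueness of the totally proper class.
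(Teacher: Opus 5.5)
Your proof is correct and matches the paper's: the bounds come from Lemma~\ref{lem:endpoint} together with $T\ge m$, and the invariance comes from $|p(R^\pi)|=|p(R^{\mathrm{op}})|=|p(R)|$ and the opposite-COQ invariance of $M_\omega$. The only differences are that you apply the lemma to $R$ directly using $m(R)=m$, where the paper applies it to $Q_*$ and transfers $|p|$ by mutation invariance, and that uniqueness in Theorem~\ref{thm:FN} makes the reversed class the totally proper class of $R^{\mathrm{op}}$ only once you also note that reversing both the arrows and the circle sends proper vertices and proper mutations to proper ones (alternatively, compare $M$ at an acyclic endpoint in a topological ordering and in its reverse).
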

\begin{proof}
    By Theorem~\ref{thm:FN}, the totally proper class of $R$ is obtained by transporting that of $Q_*$ along a mutation sequence, so its Markov invariant is $m(R)=m(Q_*)=m$.
    Lemma~\ref{lem:endpoint} applied to the connected quiver $Q_*$ gives $m\ge3$ and $2|p(Q_*)|<m$, and $|p(R)|=|p(Q_*)|$ because $|p|$ is mutation invariant.
    The bound $m\le T$ holds by the choice of $T$.
    The invariance statements were noted after equation~\eqref{eq:pf}.
\end{proof}
These bounds always refer to the totally proper class, and a different proper cyclic ordering need not give the same value.
Much of the work in the appendices consists of identifying that class.

We also record two elementary tools.
First, for integers $1\le r,s\le F-1$,
\begin{equation}\label{eq:rectangle}
    r^2+s^2-Frs\le2-F.
\end{equation}
Indeed, the left side is convex in each variable, so its maximum on the rectangle occurs at a corner.
The corners $(1,1)$, $(1,F-1)$ and $(F-1,1)$ give $2-F$; the corner $(F-1,F-1)$ gives $(2-F)(F-1)^2\leq 2-F$ \claim{A0.rect}.

Second, if the edge between vertices $1$ and $2$ of a quiver has multiplicity one, the type-$A_2$ pentagon relation \cite{FZ1}, in the form for extended exchange matrices with arbitrary additional rows \cite[Example~3.2.4]{FWZ}, gives
\begin{equation}\label{eq:pentagon}
    \mu_{\mword{1,2,1,2,1}}Q=Q^{\pi_{12}}.
\end{equation}
It holds regardless of the signs of the other entries.
To compute the intermediate quivers, apply the mutation formula \eqref{eq:mutation} to columns $3$ and $4$; the $b_{34}$ entry then follows from $p(\mu_kQ)=-p(Q)$ and the $b_{12}$ entry, which is $1$ or $-1$.

\subsection{The three local lemmas}

In Lemmas~\ref{lem:zero} and~\ref{lem:strong}, $P$ is a high quiver mutation-equivalent to $Q_*$, and $i\ne j$ are vertices with
\[
    \htq(\mu_iP)\leq T,\qquad \htq(\mu_jP)\leq F.
\]
This is the situation at the start of a high segment, where $\mu_iP$ is the low quiver the route came from and $j$ is the next mutation vertex.

\begin{lemma}[No arrow]\label{lem:zero}
    Suppose there is no edge between $i$ and $j$ in $P$.
    Then a reduced sequence starting at $P$ with $j$, staying within height $F$, and stopping at its first low quiver is either $\mathbf w=\mword{j}$ or $\mathbf w=\mword{j,i}$.
    Moreover, if $R$ is a low quiver with no edge between $i,j$, then every low quiver obtained from $R$ using only mutations at $i$ and $j$ can be reached from $R$ by at most two such mutations with every intermediate quiver low.
\end{lemma}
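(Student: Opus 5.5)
The plan rests on one structural fact: when $b_{ij}(P)=0$, the mutations $\mu_i$ and $\mu_j$ commute and keep $i,j$ non-adjacent. By \eqref{eq:mutation}, $\mu_i$ changes an entry $b_{jk}$ with $k\ne i$ only by $[b_{ji}]_+[b_{ik}]_+-[-b_{ji}]_+[-b_{ik}]_+=0$, and it only flips the sign of $b_{ji}$. So $\mu_i$ fixes row $j$ up to the sign flip of the zero entry, and symmetrically for $\mu_j$. Hence $\mu_i\mu_j=\mu_j\mu_i$ on any quiver with no edge between $i$ and $j$. It follows that every quiver reachable from $P$ (or from $R$) by mutations at $i$ and $j$ is one of the four quivers $P,\mu_iP,\mu_jP,\mu_i\mu_jP$.

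Let $\{k,l\}$ be the complementary pair. I would record the entries of these four quivers explicitly. The entries $b_{ik},b_{il},b_{jk},b_{jl}$ change only in sign, and $b_{ij}=0$ throughout. The entry $b_{kl}$ takes the values $x$, $x+\delta_i$, $x+\delta_j$ and $x+\delta_i+\delta_j$, where $x=b_{kl}(P)$ and
\[
\delta_v=[b_{kv}]_+[b_{vl}]_+-[-b_{kv}]_+[-b_{vl}]_+ .
\]
The $\delta$'s add because $\mu_j$ does not alter $b_{ki},b_{il}$. Since $\mu_iP$ is low, the four entries $b_{ik},b_{il},b_{jk},b_{jl}$ have absolute value at most $T$. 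As $P$ is high, this forces $|x|=F$ and $|x+\delta_i|\le T$. So the heights of all four quivers are governed by the single coordinate $b_{kl}$.

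For the first statement, a reduced word from $P$ beginning with $j$ must alternate, giving $\mword{j}$, $\mword{j,i}$, $\mword{j,i,j}$, and so on. Moreover $\mu_{\mword{j,i,j}}P=\mu_iP$ is low. So it suffices to show that $\mu_jP$ and $\mu_i\mu_jP$ cannot both be high. If $\mu_jP$ is high, then $|x+\delta_j|=F=|x|$, so either $\delta_j=0$ or $\delta_j=-2x$. If $\delta_j=0$, then $\mu_i\mu_jP$ has the same $b_{kl}$ as $\mu_iP$ and is low. The remaining case $\delta_j=-2x$ is the main obstacle. There $|b_{kj}b_{jl}|=2F$ with $|b_{kj}|,|b_{jl}|\le T$, and I must show $|\delta_i-x|\le T$.

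For this case I would use the Pfaffian, which here reads $p(P)=\pm(b_{ik}b_{jl}-b_{il}b_{jk})$, relating the product defining $\delta_i$ to the product defining $\delta_j$. Lemma~\ref{lem:prefix} gives $2|p|<m\le T$, so the two products $b_{ik}b_{jl}$ and $b_{il}b_{jk}$ nearly agree. I would then combine this with the rectangle inequality \eqref{eq:rectangle}, or with a Markov-constant computation on the full triangles $\{i,k,l\}$ and $\{j,k,l\}$ via Proposition~\ref{prop:rank3}, to exclude $|\delta_i-x|>T$. This should come down to a sign-pattern case split on the four side entries. If a clean inequality fails, I would verify it as a finite sign-case computation in the style of the appendices.

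For the second statement, start from a low $R$ with $b_{ij}(R)=0$. The same four-quiver picture applies, with $b_{kl}$ taking the values $y$, $y+\delta_i$, $y+\delta_j$ and $y+\delta_i+\delta_j$, and all other entries at most $T$ in absolute value. The quivers $R,\mu_iR,\mu_jR$ are reached in at most one step. The only issue is when $\mu_i\mu_jR$ is low but both $\mu_iR$ and $\mu_jR$ are not low, meaning $|y+\delta_i|,|y+\delta_j|>T$ while $|y|,|y+\delta_i+\delta_j|\le T$. I would rule this out by the same Pfaffian and sign argument as above. After that, a two-step route through whichever of $\mu_iR$, $\mu_jR$ is low, or a shorter route, exists with all intermediate quivers low.
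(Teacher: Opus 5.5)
There is a genuine gap. Your four-corner analysis of words in $\{i,j\}$ matches the paper's normalization and is sound. The gap is elsewhere: the first assertion concerns arbitrary reduced sequences, and your claim that a reduced word from $P$ beginning with $j$ must alternate between $i$ and $j$ is false. Reducedness only forbids immediate repetition, so after $\mword{j}$ the route may mutate at either of the two other vertices $k,l$. Excluding this is the real content of the first assertion. It is exactly what fails in rank five (Remark~\ref{rem:zero-rank-five}), and it is what the proof of Theorem~\ref{thm:height} uses to conclude that the segment uses only $i,j$. It cannot come from the Pfaffian or from full triangles.

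For example, take any $F\ge4$ and $\qtuple{P}=(0,1,-1,0,1,F)$. It is genuinely mutation-acyclic, since $\mu_1P$ is acyclic of height $T=F-1$, and it has $|p(P)|=1$. Yet the reduced continuation $\mword{2,3,1}$ passes through heights $F,F,T$. Every mutation-acyclicity obstruction and the bound $2|p|<T$ are silent here. Only the hypothesis $m\le T$ excludes this example, since $m=F^2-F+3$. The paper handles this case by normalizing $\qtuple{P}=(0,b,-c,d,E,F)$ and identifying the totally proper class via Lemma~\ref{lem:appendixorder}, which gives the explicit value \eqref{eq:zeroMarkov}. It then shows that a mutation at $3$ or $4$ within height $F$ forces $b=1$ and $d\le1$, and hence $m>F$.

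The same missing ingredient breaks your plan for the second assertion. Take the double-arrow box of Proposition~\ref{prop:box} with $T=3$ and $i,j$ opposite corners. It is low, both single mutations have height $4$, and the double mutation is again a box of height $2$, so the conclusion fails there. Yet $p=0$ and every full triangle in its class is mutation-acyclic, so no Pfaffian or rank-three argument can exclude it. You need the four-vertex value $m=b^2+c^2+d^2+e^2+f^2-f(bc+de)$ on the identified totally proper class, together with $3\le m\le T$, as in the paper's case split.

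Separately, in your case $\delta_j=-2x$ the target $|\delta_i-x|\le T$ is never true. Indeed, $|x+\delta_i|\le T$ with $|x|=F$ forces $\delta_i$ to have sign opposite to $x$, so $|\delta_i-x|>F$. The case must instead be shown impossible, and your triangle idea does this. The full triangle $\{j,k,l\}$ is then an oriented cycle with multiplicities $F,r,s$, where $rs=2F$ and $r,s\ge3$. Its Markov constant is $r^2+s^2-F^2=4-\tfrac14(r^2-4)(s^2-4)<4$, so it is mutation-cyclic by Proposition~\ref{prop:rank3}. This is simpler than the paper's estimate via \eqref{eq:rectangle} and Lemma~\ref{lem:productbound}.
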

Since $i,j$ are not joined, mutations at $i$ and $j$ commute, and the only entry either of these can push above $T$ is the one between the other two vertices.
The lemma implies that the commuting square never needs to pass through a high corner.
The two routes in this square have the same labeled endpoint:
\[
\begin{tikzcd}[column sep=large,row sep=large]
    R \arrow[r,"\mu_i"] \arrow[d,"\mu_j"'] &
    \mu_iR \arrow[d,"\mu_j"]\\
    \mu_jR \arrow[r,"\mu_i"'] & \mu_{\mword{i,j}}R.
\end{tikzcd}
\]

\begin{remark}\label{rem:zero-rank-five}
    The first assertion of Lemma~\ref{lem:zero} does not extend verbatim to five-vertex quivers.
    For a five-vertex quiver $Q$ write
    \[
        \qtuple{Q}=(b_{12},b_{13},b_{14},b_{15},b_{23},b_{24},b_{25},b_{34},b_{35},b_{45}),
    \]
    and let $m(Q)$ be the value on the totally proper class of the Fomin--Neville invariant $M=5-\operatorname{tr}(U^{-T}U)$ \cite[Equation~(8.2)]{fomin-neville2026}, where $U$ is the unipotent upper triangular matrix with $U_{ij}=-b_{ij}$ for $i<j$.
    On four vertices, $4-\operatorname{tr}(U^{-T}U)$ is the invariant \eqref{eq:markov-invariant}.
 
    Consider the connected quiver $C$ with $\qtuple{C}=(1,-3,0,3,0,0,2,0,-15,2)$.
    Then $\qtuple{\mu_{\mword{1,4,2}}C}=(1,0,0,-3,3,0,-2,0,0,-2)$, which is acyclic with topological ordering $5<1<2<3<4$ and Markov value $33$.
    So $C$ is mutation-acyclic and $m(C)=33$.
    Take $T=41$, $F=42$, and $P=\mu_3C$.
    The heights of $C$ and of the quivers along $\mword{3,4,2}$ are $15,42,42,40$.
    Moreover $b_{34}(P)=0$, $\htq(\mu_3P)=15\le T$ and $\htq(\mu_4P)=F$.
    The reduced continuation $\mword{4,2}$ from $P$ therefore reaches its first low quiver within height $F$, although it is neither $\mword{4}$ nor $\mword{4,3}$.
 
    The excursion still has a low replacement.
    The vertices $2,3,4$ are pairwise nonadjacent in $C$, so their mutations commute, and $\mu_{\mword{3,4,2}}C=\mu_{\mword{2,3,4}}C$.
    The heights along the replacement are $15,15,40,40$.
\end{remark}

\begin{lemma}[A single arrow]\label{lem:unitroute}
    Let $K\geq0$ be an integer, and let $C$ be a quiver mutation-equivalent to $Q_*$ with exactly one arrow between $i$ and $j$.
    Suppose that $C$ and a quiver $R$ obtained from $C$ by mutations at $i$ and $j$ both have height at most $K$.
    Then a sequence of at most three mutations at $i,j$ reaches $R$, possibly with $i,j$ swapped, and every quiver along it has height at most $K$.
\end{lemma}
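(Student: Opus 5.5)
The plan is to use the pentagon relation \eqref{eq:pentagon} to make the orbit of $C$ under mutations at $i,j$ explicit, and then to pick, for each possible $R$, a short arc of that orbit whose heights are controlled by those of its endpoints.

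\emph{Step 1 (the orbit is a decagon).} Relabel so that $\{i,j\}=\{1,2\}$ and the other two vertices are $3,4$. Since $|b_{12}(C)|=1$, \eqref{eq:pentagon} gives $\mu_{\mword{1,2,1,2,1}}C=C^{\pi_{12}}$, and likewise with $1,2$ interchanged. Every reduced word in $1,2$ alternates, so the labeled quivers $\mu_{\mathbf w}C$ lie on a $10$-cycle
\[
C=C_0,\ C_1=\mu_1C_0,\ C_2=\mu_2C_1,\ \dots,\ C_5=C^{\pi_{12}},\ \dots,\ C_{10}=C,
\]
with $C_{r+5}=C_r^{\pi_{12}}$. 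Hence $R=C_r$ for some $r$, and ``possibly with $i,j$ swapped'' means we may aim for either $C_r$ or $C_{r\pm5}$. These two antipodal points are at cycle distances $r$ and $5-r$ from $C$ in the two directions. So for each $r\in\{1,2\}$ there are two candidate routes: one of length $r$ and one of length $5-r\le 4$. The statement promises length at most three, so for $r=1$ we must use the direct route, and for $r=2$ we may choose between lengths $2$ and $3$. I will therefore organize the argument around the unordered distance class $r\in\{0,1,2\}$ and read $R$ as $C_r$ or $C_{5-r}$ up to the swap.

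\emph{Step 2 (explicit entries).} Write $x_k=b_{1k}(C)$ and $y_k=b_{2k}(C)$ for $k=3,4$, and let $f=b_{34}(C)$. Along the cycle, $b_{12}=\pm1$. Each pair of entries $(b_{1k},b_{2k})$ evolves by the piecewise-linear type-$A_2$ rule: applying \eqref{eq:mutation} column by column shows that the pair runs through values among $x_k$, $y_k$, and $x_k+y_k$-type combinations, with signs depending on the sign pattern of $(x_k,y_k)$ and $b_{12}$. The entry $b_{34}$ is then recovered from $p(C_{r+1})=-p(C_r)$, as noted after \eqref{eq:pentagon}. I will tabulate $C_1,\dots,C_5$ in each sign case. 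There are four sign patterns for each of $k=3,4$, together with the sign of $b_{12}$, and reversal of all arrows halves the number of cases.

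\emph{Step 3 (height along arcs).} For each sign case and each $r$, I will show that the chosen arc from $C$ to $C_r$ or $C_{r\pm5}$ of length at most three has every intermediate entry bounded in absolute value by the maximum of the corresponding entries at its two endpoints. The intended mechanism is that in the tropical $A_2$ recurrence an intermediate value is either one of the endpoint values or a sum $|x|+|y|$ that appears at some vertex of the arc. The arc should be chosen to avoid the vertex where this sum appears. Then all heights along the route are at most $\max\{\htq(C),\htq(R)\}\le K$.

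\emph{Main obstacle.} I expect the $b_{34}$ entry to be the hard part, since it is not governed by the $A_2$ rule and can change by products such as $x_3y_4$ or $x_4y_3$. Here I would first try to express $b_{34}$ at every point of the cycle as a function of the endpoint data, using the Pfaffian relation $b_{34}=\pm p\mp(b_{13}b_{24}-b_{14}b_{23})$. I would then argue, case by case, that on the shorter admissible arc the quantity $b_{13}b_{24}-b_{14}b_{23}$ is sandwiched between its endpoint values. Where this fails, I would switch to the other arc, which is why the route is allowed to have length up to three rather than two. The resulting finite list of sign cases, each an inequality in the integer parameters $x_k,y_k,f$, would be checked exactly by computer in the same way as the other appendix calculations.
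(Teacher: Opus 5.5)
Your Step~1 matches the paper's setup: ten labeled quivers, with only the two distance-two endpoints requiring a choice between the direct route and a length-three detour. You also correctly locate the difficulty in the $b_{34}$ entry. The gap is that your argument never uses the hypothesis that $C$ is mutation-equivalent to the connected mutation-acyclic quiver $Q_*$. Without that hypothesis the lemma is false, so the local sign-case inequalities you plan to check by computer cannot all hold.

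For instance, take $\qtuple{P}=(1,3,-3,3,-3,11)$, $C=\mu_1P$ and $R=\mu_2P=\mu_{\mword{1,2}}C$. Then $\qtuple{C}=(-1,-3,3,3,-6,2)$ and $\qtuple{R}=(-1,6,-3,-3,3,2)$, both of height $6$, and $|b_{12}(C)|=1$. The only routes of length at most three from $C$ to $R$ or $R^{\pi_{12}}$ are these two:
\begin{itemize}
\item $\mword{1,2}$, which passes through $P$ of height $11$;
\item $\mword{2,1,2}$, which passes through $(1,-3,-3,-3,6,-16)$ and $(-1,3,3,-6,3,-16)$.
\end{itemize}
So the conclusion fails for every $6\le K\le 10$. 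Here $M_{\corder{1,2,3,4}}(P)=-121$, so by Lemma~\ref{lem:appendixorder} and Lemma~\ref{lem:endpoint}, $P$ is not in the class of any connected mutation-acyclic quiver.

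This also shows why sandwiching cannot be the mechanism. The only nontrivial case is when the direct intermediate quiver $P$ exceeds $K$ while both its neighbors do not. Every entry touching $1$ or $2$ is then bounded, because one of the two mutations merely reverses it. After normalizing, $\qtuple{P}=(1,b,-c,d,-e,f)$ with $b,c,d,e\ge1$ and $f>K$. The $b_{34}$ entry then behaves as follows:
\begin{itemize}
\item at the two endpoints it is $f-bc$ and $f-de$;
\item on the direct route it is $f$, strictly above both;
\item on both detour quivers it is $f-D$ with $D=bc+de+cd$, strictly below both.
\end{itemize}
So $b_{34}$ is never between its endpoint values on either arc, and switching arcs does not help. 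What must be proved is the absolute bound $f-D\ge -K$.

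The paper gets this bound from global information. Lemma~\ref{lem:appendixorder} identifies the totally proper class as that of $\corder{1,2,3,4}$, which gives $m=1+b^2+c^2+d^2+e^2+bd+ce+f^2-fD$ and $p=f+be-cd$. If $f+K<D$, there are two cases:
\begin{itemize}
\item if $c,d\ge2$, then $m\le-1$;
\item otherwise, up to the symmetry $(b,c,d,e)\mapsto(e,d,c,b)$, $d=1$ and $m-2p\le-5$.
\end{itemize}
Either outcome contradicts $m\ge3$ and $2|p|<m$ from \eqref{eq:prefixscalar}. These two inequalities, together with the identification of the class in which $m$ is evaluated, are the missing ingredients in your plan.
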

Here the pentagon relation \eqref{eq:pentagon} gives a ten-quiver picture, and any endpoint can be reached by two routes.
This lemma says at least one of them stays within height $K$.
For example, if $R=\mu_{\mword{1,2}}C$, the two routes to the same endpoint up to $\pi_{12}$ are
\[
\begin{tikzcd}[column sep=large,row sep=large]
    C \arrow[r,"\mu_1"] \arrow[d,"\mu_2"'] &
    \mu_1C \arrow[r,"\mu_2"] & R\\
    \mu_2C \arrow[r,"\mu_1"'] &
    \mu_{\mword{2,1}}C \arrow[r,"\mu_2"'] & R^{\pi_{12}}.
\end{tikzcd}
\]
The direct route has length two and the detour has length three, and the lemma controls their intermediate heights.

\begin{lemma}[Multiplicity at least two]\label{lem:strong}
    Suppose the edge between $i$ and $j$ in $P$ has multiplicity at least two.
    Then no reduced sequence starting at $P$ with $j$ can reach a low quiver while staying within height $F$.
\end{lemma}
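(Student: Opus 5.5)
The plan is to show that, once the $i$--$j$ edge has multiplicity at least two, every reduced sequence starting at $P$ with $j$ and staying within height $F$ keeps some multiplicity equal to $F$ and never reaches a low quiver. After relabeling and, if needed, reversing all arrows (Lemma~\ref{lem:prefix} says these operations preserve the scalar bounds), I may assume $i=1$, $j=2$ and $b_{12}(P)=x$ with $2\le |x|\le F$.

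The starting data are the two low or high neighbours $\mu_1P$ (height at most $T$) and $\mu_2P$ (height at most $F$). Mutation at $1$ only changes the entries $b_{23},b_{24},b_{34}$, and flips row $1$. Since $P$ has height exactly $F$ and $\mu_1P$ is low, the entry of size $F$ in $P$ must lie among $b_{23},b_{24},b_{34}$, and mutation at $1$ must lower it. This pins down the signs: vertex $1$ sits on a two-path through the offending pair, so the offending entry equals (up to sign) $\pm(\text{low value})+\text{product of two row-$1$ entries}$. I would then split into cases according to which of the three entries has size $F$ and according to the sign pattern of row $1$. These are the cases the appendix must enumerate.

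The core step is a growth argument in the rank-two style of Soukup. Along a reduced word that alternates $1,2,1,2,\dots$, the entries from $\{1,2\}$ to $\{3,4\}$ transform by the rank-two Chebyshev-type recursion with parameter $|x|\ge2$. That recursion is nondecreasing in absolute value once the pair is ``growing''. The $b_{34}$ entry is then fixed by $p=\pm p(P)$ and $|2p|<m\le T$. I would show that after the first step $2$ the relevant column is in the growing regime, using the hypothesis that $\mu_1P$ is low. Then $b_{34}$, or some cross entry, stays at least $F$. Inequality \eqref{eq:rectangle} is the tool for excluding intermediate values: products $rs$ with $1\le r,s\le F-1$ combined with $F$ cannot fall back below $T+1$ without exceeding $F$ somewhere.

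Mutations at vertices $3$ or $4$ also have to be handled, so the argument must track a general reduced word rather than only alternating ones. For this I would find an invariant ``certificate'' that persists within height $F$: the totally proper class together with the bound $M=m\le T<F$. Concretely, in the totally proper ordering the Markov invariant, with nonnegative-style cancellation controlled, must equal $m\le T$. A high quiver whose large entry is supported by a multiplicity $\ge2$ edge forces $M$ to be large unless specific cancellations occur, and those cancellations propagate under any further mutation allowed within height $F$.

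I expect the main obstacle to be exactly this identification of the totally proper class in each sign case, and verifying that the Markov value on it contradicts $m\le T$ whenever a low quiver would be reached. The case analysis is finite in shape but parametric in $F$ and the entries. I would first try the alternating $1,2$ words, where the explicit recursion is available. I would then treat a third-vertex mutation by showing that it immediately produces an entry exceeding $F$ via \eqref{eq:rectangle}, checking every resulting polynomial inequality symbolically in the style of \file{check\_proofs.py}.
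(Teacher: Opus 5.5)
Your outline has the right overall shape, but the step that carries the lemma rests on the wrong hypothesis, and as stated it fails. You propose to show that after the first mutation at $2$ the columns joining $\{1,2\}$ to $\{3,4\}$ are in the growing regime ``using the hypothesis that $\mu_1P$ is low''.

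After normalizing, one has $\qtuple{P}=(a,b,-c,d,E,F)$ with $a\ge2$ and $b,c\ge1$. The normalization itself needs the height bound on $\mu_2P$: if $b_{2k}(P)=F$ drops under $\mu_1$, then $b_{1k}(P)<0$, and $\mu_2$ creates $b_{1k}(P)+aF>F$. The sign pattern that launches the growth is then $d\ge0$, $E\ge c$, and local height data do not force it.

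For $\qtuple{P}=(2,1,-1,-1,0,F)$ one gets:
\begin{itemize}
\item $\qtuple{\mu_1P}=(-2,-1,1,-1,-2,F-1)$, which is low;
\item $\qtuple{\mu_2P}=(-2,1,-1,1,0,F)$, of height $F$;
\item $\qtuple{\mu_{\mword{2,1}}P}=(2,-1,1,3,0,F-1)$, which is low again.
\end{itemize}
So the reduced word $\mword{2,1}$ returns. The only thing excluding this $P$ is $p(P)=2F+1$, which violates $2|p|<m$.

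This is why the paper spends most of the proof on the sign pattern before any growth argument:
\begin{itemize}
\item $d<0<E$ is excluded by the height bound on $\mu_2P$;
\item $d<0,\ E\le0$ and $d=0,\ E<0$ are excluded by $p\ge aF\ge2F$;
\item $d\ge0,\ 0\le E<c$ and $E<0<d$ are excluded by identifying the totally proper class via Lemma~\ref{lem:appendixorder}, writing $m$ through an explicit identity, and combining \eqref{eq:rectangle} with $2p<m\le F-1$ to reach $m<3$.
\end{itemize}

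Conversely, you use the global invariants where they are not needed. Once $d\ge0$ and $E\ge c$, the entry $b_{34}=F$ is simply unchanged along alternating mutations at $1,2$. Each of $1$ and $2$ has $3$ and $4$ on the same side, so no two-path through them joins $3$ and $4$, and no appeal to $p$ is needed. The invariant form is $b_{12}>0$ with columns $(-X,Y)$, $1\le X\le Y$. Mutation at $2$ followed by swapping $1,2$ sends a column to $(-Y,aY-X)$. From this form, mutations at $3$ and $4$ overflow directly, creating $Y_4+Y_3F>F$ and $-X_3-X_4F<-F$. Neither \eqref{eq:rectangle} nor a persisting Markov certificate is involved.

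The bound $m\le T$ is needed only once more: for the exception $d=0$, $E=c=1$ to the overflow at vertex $4$ from $\mu_2P$, where $m=a^2+b^2+F^2-a-bF+2>F$. The missing idea is to use $2|p|<m$ and the Markov value on the totally proper class at the start, to fix the signs of $P$. Without that, the growth step you rely on is false.
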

Here a double arrow makes the entries grow.
After the mutation at $j$, every mutation other than at $i$ exceeds height $F$, and alternating at $i,j$ never lowers the entry that is already $F$.

\begin{remark}\label{rem:strong-rank-five}
    Lemma~\ref{lem:strong} also fails for five-vertex quivers.
    With the conventions of Remark~\ref{rem:zero-rank-five}, consider the connected quiver $C$ with $\qtuple{C}=(1,-2,0,0,2,-1,0,0,6,-4)$.
    Then $\qtuple{\mu_{\mword{2,4,3}}C}=(-1,0,0,0,0,-1,0,-2,2,0)$, which is acyclic with topological ordering $4<2<1<3<5$ and Markov value $10$.
    So $C$ is mutation-acyclic and $m(C)=10$.
    Take $T=11$, $F=12$, and $P=\mu_3C$.
    Then
    \[
    \begin{aligned}
        \qtuple{P}&=(-3,2,0,0,-2,-1,12,0,-6,-4),\\
        \qtuple{\mu_1P}&=(3,-2,0,0,4,-1,12,0,-6,-4),\\
        \qtuple{\mu_{\mword{1,4}}P}&=(3,-2,0,0,4,1,8,0,-6,4).
    \end{aligned}
    \]
    Thus $|b_{13}(P)|=2$, $\htq(\mu_3P)=\htq(C)=6\le T$ and $\htq(\mu_1P)=F$, but the reduced continuation $\mword{1,4}$ reaches a quiver of height $8\le T$ within height $F$.
 
    Again the excursion has a low replacement.
    Vertex $4$ is joined to neither $1$ nor $3$ in $C$, so its mutation commutes with both, and $\mu_{\mword{3,1,4}}C=\mu_{\mword{4,3,1}}C$.
    The heights along the two words are $6,12,12,8$ and $6,6,8,8$.
    So the arguments behind Lemmas~\ref{lem:zero} and~\ref{lem:strong} would need further normalization in rank five, although in both examples the excursion can still be lowered.
\end{remark}

\subsection{High segments across a single arrow}

When the first two mutation vertices of a high segment are joined by a single arrow, Lemma~\ref{lem:unitroute} alone does not suffice, because the segment may use a third vertex.
The following proposition classifies the remaining cases for a minimizing route, defined next.
The restriction to minimizing routes is essential, as Remark~\ref{rem:unitminimality} shows.

For a finite route $\mathcal R=(R_0,\dots,R_L)$, let
\[
    F(\mathcal R)=\max_{0\leq j\leq L}\htq(R_j),\quad
    N(\mathcal R)=\#\{j:\htq(R_j)=F(\mathcal R)\}.
\]
We call $\mathcal R$ \textbf{minimizing} if, among all mutation routes from its fixed starting quiver to any acyclic quiver, it minimizes lexicographically
\begin{equation}\label{eq:routecost}
    \bigl(F(\mathcal R),N(\mathcal R),L\bigr).
\end{equation}
In other words, first minimize the maximum height, then the number of visits at that height, and finally the length.
Such a route exists whenever the starting quiver is mutation-acyclic, since each quantity is a nonnegative integer.
The last minimum makes the route reduced.
Relabeling the entire route or reversing every arrow preserves these three quantities and gives a bijection with routes from the transformed starting quiver, so it also preserves minimality.

The quivers defined by
\begin{equation}\label{eq:AI}
    \qtuple{A_c}=(1,-2,c,1,0,F-2c),\quad
    \qtuple{I_{t,u}}=(1,-1,-t,2,t-F,F-u),\quad 1\leq c,t,u\leq T,
\end{equation}
are the only possible low predecessors when a high segment of a minimizing route uses a third mutation vertex, and each admits an explicit low detour.

\begin{proposition}\label{prop:unit}
    Let $\mathcal R$ be a minimizing mutation route from $Q_*$ to an acyclic quiver in the sense of \eqref{eq:routecost}, and suppose its maximum height is $F$.
    Consider a high segment of $\mathcal R$, preceded by a low quiver $C$ and followed by a low quiver.
    Suppose exactly one arrow joins the first two mutation vertices of the segment.
    Relabel so that these vertices are $1$ and $2$, in that order, and reverse all arrows if necessary so that $b_{12}(C)=1$.
    Then one of the following holds.
    \begin{enumerate}[label=(\alph*)]
        \item The segment mutates only at $1$ and $2$.
        Then a sequence of at most three mutations at $1,2$, all of whose quivers are low, reaches the same low endpoint, possibly relabeled by the swap of $1,2$.
        \item The segment mutates at a third vertex, which we relabel $3$.
        Then the segment begins with exactly $\mathbf w=\mword{1,2}$ before its first mutation at $3$, the quiver $C$ is $A_c$ for some $1\leq c\leq T$ or $I_{t,u}$ for some $1\leq t\ne u\leq T$, and the segment is replaced as follows:
        \begin{center}
        \begin{tabular}{c|c|c|c}
             starting quiver & segment & replacement & endpoint of replacement\\\hline
             $A_c$ & $\mword{1,2,3,1,3}$ & $\mword{2,1,3}$ & $(\mu_{\mword{1,2,3,1,3}}A_c)^{\pi_{123}}$\\
             $I_{t,u},\ t>u$ & $\mword{1,2,3}$ & $\mword{2,1,3,2,3}$ & $(\mu_{\mword{1,2,3}}I_{t,u})^{\pi_{123}}$\\
             $I_{t,u},\ t < u$ & $\mword{1,2,3}$ & $\mword{3,1,3,2,1}$ & $(\mu_{\mword{1,2,3}}I_{t,u})^{\pi_{123}}$
        \end{tabular}
        \end{center}
        In each row the segment column is the excursion word from $C$ through the entire high segment to its first low successor, and every quiver on the replacement is low.
    \end{enumerate}
\end{proposition}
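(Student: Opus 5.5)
The plan is to split according to whether the excursion word uses a third vertex. Before doing so I would record two normalizations, both licensed by the remark after \eqref{eq:routecost}: relabeling and global reversal preserve minimality, and Lemma~\ref{lem:prefix} is invariant under both. So we may assume the first two mutation vertices are $1,2$ and that $b_{12}(C)=1$. Write $P=\mu_1C$ for the first high quiver. Then $\htq(\mu_1P)=\htq(C)\le T$ and $\htq(\mu_2P)\le F$, which is exactly the setup of the local lemmas with $i=1$, $j=2$.

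Case (a) is a direct application of Lemma~\ref{lem:unitroute} with $K=T$. The starting quiver $C$ and the low successor $R$ both have height at most $T$, and $R$ is obtained from $C$ by mutations at $1,2$ only. The lemma therefore gives a route of at most three mutations at $1,2$, all of height at most $T$, reaching $R$ or $R^{\pi_{12}}$. By \eqref{eq:tailtransport}, the remainder of $\mathcal R$ can be transported through $\pi_{12}$ without changing heights or final acyclicity.

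For case (b), I would first show that the segment must begin with exactly $\mword{1,2}$. After $\mword{1,2}$, the next mutation must be at $3$ or $4$ (relabel it $3$), since otherwise the segment would still be using only $1,2$ and we would be in case (a). I would then argue that if the segment alternated $1,2$ more than twice before leaving, the pentagon relation \eqref{eq:pentagon} would let us shortcut. The intermediate quivers of the pentagon are explicit in terms of columns $3,4$, so a longer alternation revisits a quiver reachable with fewer high visits, contradicting minimality of $(F,N,L)$.

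Next, with $C$ parametrized as $(1,b,c,d,e,f)$, I would write down the entries of $\mu_1C$, $\mu_{\mword{1,2}}C$ and $\mu_{\mword{1,2,3}}C$. Two types of constraint then apply. The first is that $C$ is low, the intermediate quivers have height exactly $F$, and the first low successor has height at most $T$. The second is Lemma~\ref{lem:prefix} ($3\le m\le T$, $2|p|<m$), where the Markov value is computed on the totally proper class; I would identify that class by exhibiting the unique proper wiggle class of the relevant quivers. The inequality \eqref{eq:rectangle} kills most sign patterns, because a product term of size about $F\cdot rs$ forces an entry beyond $F$. I expect the surviving families to be exactly $A_c$, whose segment must continue $\mword{3,1,3}$, and $I_{t,u}$, whose segment exits after $\mword{1,2,3}$. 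Any other continuation after the mutation at $3$ should be excluded either by Lemma~\ref{lem:strong}, applied at the new high quiver whenever the next pair is joined by a multiplicity $\ge2$ edge, or by Lemma~\ref{lem:zero} when they are unjoined. In both cases the excursion ends immediately or cannot return low.

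Finally, for each row of the table, I would verify symbolically in $c$ or $(t,u)$ that the replacement word produces the stated relabeled endpoint and that every intermediate height is at most $T$. For $I_{t,u}$ the two orderings $t>u$ and $t<u$ need different replacements, and $t=u$ must be excluded, presumably because it forces $2|p|\ge m$. This step is mechanical exact polynomial checking.

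The main obstacle is the exhaustive sign-pattern case analysis in the second stage of case (b). It is many cases, each needing the correct totally proper class to bring the Markov bound $m\le T$ to bear. I would organize it by the signs of $b_{13},b_{23}$ in $C$, since these determine whether mutation at $3$ after $\mword{1,2}$ can lower the height-$F$ entry, and I would delegate the verification to the program.
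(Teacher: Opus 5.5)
Your case (a) and the overall shape of case (b) agree with the paper, but two steps fail as written. \emph{First, the prefix.} The pentagon shortcut disposes of alternating prefixes of length at least four; this is Lemma~\ref{lem:unitroute} with $K=F$. It does not dispose of $\mword{1,2,1}$. The only shorter way to reach $\mu_{\mword{1,2,1}}C$ up to $\pi_{12}$ is $\mword{2,1}$, which passes through $\mu_2C$. Neither the pentagon relation nor minimality bounds $\htq(\mu_2C)$ by $F$. Minimality only tells you that a prefix $\mword{1,2,1}$ forces $\htq(\mu_2C)>F$, and that configuration must then be ruled out using the invariants. In the paper, the height information forces $\qtuple{C}=(1,-b,c,d,-E,F-bc)$ with $b>d\ge1$, $c,E\ge1$, $c+E\le F$, $bc\le 2F-1<bc+dE$, and either $c+E=F$ or $d(c+E)=2F$. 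Evaluating $m$ and $p$ on the unique proper class $\corder{1,4,2,3}$ then gives $-2p>F$ in the first case, and $3m<9$ in the second via Lemma~\ref{lem:productbound}. You assert the shortcut unconditionally and so skip this step.

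\emph{Second, the sign analysis.} The tools you list (height bounds, $3\le m\le T$, $2|p|<m$, the rectangle bound) are not enough to isolate $A_c$ and $I_{t,u}$ with $t\ne u$. Take $F=6$ and $\qtuple{I_{1,1}}=(1,-1,-1,2,-5,5)$. Its excursion $\mword{1,2,3}$ has heights $6,6,5$. Its unique proper class $\corder{1,2,3,4}$ gives $M=4+t^2=5\le T$, and $p=-2$, so $2|p|<m$. Your guess that $t=u$ forces $2|p|\ge m$ is therefore false. The paper excludes $t=u$ because the full triangle on $\{2,3,4\}$ is an oriented cycle with multiplicities $(2,k,k)$, $k=F-t\ge2$, and Markov constant $4$, so it is mutation-cyclic by Proposition~\ref{prop:rank3}. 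The same rank-three obstruction is indispensable in the subcase $b<d$, $b=1$ of Appendix~\ref{app:case1}. For example, $\qtuple{C}=(1,-1,1,2,-5,5)$ with $F=6$ satisfies all your numerical constraints, with $m=5$ and $p=2$. Vortices and the non-acyclicity of $C$ (a consequence of minimality) are also used to discard further cases.

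Finally, Lemmas~\ref{lem:zero} and~\ref{lem:strong} cannot be invoked in mid-segment. Both require the predecessor $\mu_iP$ to be low. That fails after $\mword{1,2,3}$, because $\mu_{\mword{1,2}}C$ is high. For $A_c$ the edges from $3$ to $1$ and to $2$ are single arrows anyway, so neither lemma would apply. The continuation $\mword{1,3}$ for $A_c$ has to be forced by direct computation. For instance, mutation at $2$ or $4$ after $\mword{1,2,3}$ creates the entries $2F-c$ and $F^2-1$.
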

Part (a) is Lemma~\ref{lem:unitroute} with $K=T$.
The rest is proved in Appendix~\ref{app:signs}. Appendix~\ref{app:prefix} shows that the alternating prefix at $1,2$ is exactly $\mathbf w=\mword{1,2}$, Appendices~\ref{app:case1}--\ref{app:case3} run through the signs of the entries of $C$ to isolate $A_c$ and $I_{t,u}$, and Appendix~\ref{app:equal} verifies the replacements and excludes $t=u$.

\begin{remark}\label{rem:unitminimality}
The minimizing-route hypothesis in Proposition~\ref{prop:unit} cannot be replaced by reducedness alone.
Take the connected acyclic quiver
\[
    \qtuple{Q_*}=(1,1,-1,1,0,0),\qquad m(Q_*)=T=5,\qquad F=6.
\]
The word $\mathbf w=\mword{2,3,1,3,1,3,1,3}$ reaches the quiver $C$ with $\qtuple{C}=(1,-2,5,1,-1,-4)$, with successive heights $2,2,2,2,2,3,4,5$.
Starting at this low quiver $C$, the following reduced excursion remains within height six. In the row for $\mathbf v$, each entry column gives $b_{ij}(\mu_{\mathbf v}C)$:
\begin{center}\small
\begin{tabular}{l|rrrrrr|r}\toprule
    Word from $C$ & $b_{12}$ & $b_{13}$ & $b_{14}$ & $b_{23}$ & $b_{24}$ & $b_{34}$ & Height\\\midrule
    $\mword{}$ & $1$ & $-2$ & $5$ & $1$ & $-1$ & $-4$ & $5$\\
    $\mword{1}$ & $-1$ & $2$ & $-5$ & $-1$ & $-1$ & $6$ & $6$\\
    $\mword{1,2}$ & $1$ & $1$ & $-6$ & $1$ & $1$ & $6$ & $6$\\
    $\mword{1,2,1}$ & $-1$ & $-1$ & $6$ & $1$ & $-5$ & $0$ & $6$\\
    $\mword{1,2,1,3}$ & $-2$ & $1$ & $6$ & $-1$ & $-5$ & $0$ & $6$\\
    $\mword{1,2,1,3,2}$ & $2$ & $-1$ & $-4$ & $1$ & $5$ & $0$ & $5$\\\bottomrule
\end{tabular}
\end{center}
After the return to height five, the word $\mword{1,2,1,2,3}$ reaches the acyclic quiver $(1,1,0,-1,1,0)$, with successive heights $4,3,2,2,1$.
Thus the full word
\[
    \mathbf w=\mword{2,3,1,3,1,3,1,3}\,\mword{1,2,1,3,2}\,\mword{1,2,1,2,3}
\]
is reduced, starts at $Q_*$, ends at an acyclic quiver, and has maximum height six.
The first two excursion vertices are $1,2$, with $b_{12}(C)=1$, but the first mutation at a third vertex occurs after the prefix $\mword{1,2,1}$ rather than $\mword{1,2}$.
Moreover, $C$ is neither $A_c$ nor $I_{t,u}$ in this labeling.
This contradicts the conclusion of Proposition~\ref{prop:unit} if its minimality hypothesis is omitted.
The route here is not minimizing since its starting quiver is already acyclic.
In Appendix~\ref{app:prefix}, minimality is exactly what allows longer alternating prefixes to be replaced before the sign cases are classified.
These claims are checked by \file{examples.py}.
\end{remark}

\section{A height bound and decision algorithms}\label{sec:main}

\subsection{A height bound on mutation sequences}
\begin{theorem}\label{thm:height}
    Let $Q$ be a mutation-acyclic four-vertex quiver.
    Then some mutation sequence takes $Q$ to an acyclic quiver with every intermediate arrow multiplicity at most $\max\{\htq(Q),m(Q)\}$.
\end{theorem}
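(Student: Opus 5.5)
We argue by contradiction using a minimizing route in the sense of \eqref{eq:routecost}. First dispose of trivial cases: if $Q$ is disconnected, each component has at most three vertices, and we handle it by Proposition~\ref{prop:rank3} together with the rank-three descent, or more simply by noting that mutations in different components commute, so we may treat components separately. If $Q$ is acyclic, the empty route works. So assume $Q=Q_*$ is connected and mutation-acyclic, and put $T_0=\max\{\htq(Q),m(Q)\}$. Choose a minimizing route $\mathcal R$ from $Q$ to an acyclic quiver, which exists because $Q$ is mutation-acyclic. If $F(\mathcal R)\le T_0$ we are done. Otherwise set $T=F(\mathcal R)-1\ge T_0\ge m$ and $F=T+1$, so that the setup of Section~\ref{sec:local} applies: every quiver on $\mathcal R$ is low or high, and at least one is high.

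Next we locate a high segment with low quivers on both sides. The start $Q$ is low because $\htq(Q)\le T_0\le T$. The endpoint is acyclic, so by Lemma~\ref{lem:endpoint} its height is at most $\lfloor\sqrt m\rfloor\le m\le T$, and it is also low. Hence every high segment is preceded by a low quiver $C$ and followed by a low quiver. Take any such segment. Let $i$ be the first excursion vertex, so that $P=\mu_iC$ is the first high quiver, and let $j$ be the second excursion vertex; we have $j\ne i$ because the route is reduced. If the segment has only one high quiver, then the excursion word is $\mword{i,j}$. We split into cases according to the multiplicity of the edge between $i$ and $j$ in $P$, which equals its multiplicity in $C$.

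The three cases go as follows.
\begin{itemize}
\item If there is no edge between $i$ and $j$, Lemma~\ref{lem:zero} forces the excursion word to be $\mword{i,j}$, so the high quiver is $\mu_iC$ and the low successor is $\mu_{\mword{i,j}}C$. The second assertion of Lemma~\ref{lem:zero} then gives a low route of length at most two from $C$ to that successor.
\item If the multiplicity is at least two, Lemma~\ref{lem:strong} says that the segment cannot return to a low quiver within height $F$. This contradicts the existence of the low successor.
\item If there is a single arrow, Proposition~\ref{prop:unit} applies, after relabeling and possibly reversing arrows, which preserves minimality. In case (a) or (b) it supplies a replacement from $C$ through low quivers only, reaching the low successor up to a relabeling $\pi_{12}$ or $\pi_{123}$.
\end{itemize}

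In each surviving case we splice the replacement into $\mathcal R$. When the endpoint is relabeled, we transport the tail of $\mathcal R$ through \eqref{eq:tailtransport}, replacing each later vertex $k$ by $\pi^{-1}(k)$. This preserves all later heights and the acyclicity of the final quiver. The new route still ends at an acyclic quiver and has maximum height at most $F$. It has strictly fewer quivers at height $F$, because the removed segment contained at least one high quiver and the inserted quivers are all low. This contradicts the minimality of $N(\mathcal R)$, or of $F(\mathcal R)$ if no high quivers remain. Therefore $F(\mathcal R)\le T_0$.

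The main obstacle is the single-arrow case, which is exactly what Proposition~\ref{prop:unit} and its appendices provide. At this stage I would only check that its hypotheses match: the route must be minimizing with maximum height $F$, and $T\ge m$ must hold. This requires choosing $T=F(\mathcal R)-1$ rather than $T_0$ itself, and that choice is what the argument above does.
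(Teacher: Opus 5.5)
Your argument for the connected case is the paper's proof. You take a route minimizing $(F(\mathcal R),N(\mathcal R),L)$, set $T=F(\mathcal R)-1\ge m(Q)$, note that both ends are low, remove one high excursion using Lemma~\ref{lem:strong}, Lemma~\ref{lem:zero} and Proposition~\ref{prop:unit}, and transport the tail by \eqref{eq:tailtransport}.

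There is one misquotation. The first part of Lemma~\ref{lem:zero} allows the excursion word to be $\mword{i,j}$ or $\mword{i,j,i}$, because the continuation from $P$ may be $\mword{j,i}$. It does not force $\mword{i,j}$. This is harmless: in both cases the low successor is reached from $C$ by mutations at $i,j$ alone, and the second part of Lemma~\ref{lem:zero} covers every such low quiver. Indeed $\mu_{\mword{i,j,i}}C=\mu_jC$.

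The genuine omission is the disconnected case. Treating components separately only reduces you to a three-vertex component. It does not produce a route of height at most $\htq(Q)$, and you never check that the rank-three descent stays in that range. The paper supplies this step as follows.
\begin{enumerate}[label=(\roman*)]
\item For an oriented triangle with multiplicities $x\le y\le z$, mutating at the vertex opposite $z$ replaces $z$ by $|xy-z|$, and the result is acyclic when $xy\le z$.
\item If $x=1$, then $xy\le z$, so one mutation gives an acyclic quiver and $|xy-z|\le z$.
\item If $x\ge2$, then $xy<2z$. Otherwise $y\le z\le xy/2$, and the Markov constant would satisfy $x^2+y^2+z^2-xyz\le x^2+(2-x)y^2\le4$. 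With $x,y,z\ge 2$, Proposition~\ref{prop:rank3} would then make the component mutation-cyclic, which is a contradiction. Hence $|xy-z|<z$ in this case.
\item So no multiplicity ever increases, and while the component stays an oriented triangle the sum of its multiplicities strictly decreases, so the descent terminates within height $\htq(Q)$.
\end{enumerate}
With this added, your proof is complete.
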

\begin{proof}
    If $Q$ is disconnected, every component has at most three vertices, and components with at most two vertices are acyclic.
    A non-acyclic three-vertex component is an oriented triangle, and the descent of \cite[Section~2.5]{assem2006} mutates it at the vertex opposite a largest multiplicity until it becomes acyclic.
    If the multiplicities are $x\le y\le z$, such a mutation keeps $x,y$ and replaces $z$ by $|xy-z|$, and the result is acyclic if $xy\le z$.
    We claim that $xy<2z$ whenever $x\ge2$.
    Otherwise $y\le z\le xy/2$, and since $z^2-xyz$ decreases for $z\le xy/2$, the Markov constant satisfies
    \[
        x^2+y^2+z^2-xyz\le x^2+2y^2-xy^2=x^2+(2-x)y^2\le4,
    \]
    using $y\ge x\ge2$; by Proposition~\ref{prop:rank3} the component would be mutation-cyclic.
    If $x\le1$, then $xy\le z$.
    So the replaced multiplicity never increases, and while the component remains an oriented triangle the sum of its multiplicities strictly decreases, so the descent terminates.
    Apply these componentwise sequences independently, obtaining a route of height at most $\htq(Q)$.
    We can therefore assume $Q$ is connected.

    Choose a minimizing route $\mathcal R$ in the sense of \eqref{eq:routecost}, and let $F$ be its maximum height.
    Suppose, for a contradiction, that $F>\max\{\htq(Q),m(Q)\}$, and put $T=F-1$.
    Then $T\ge m(Q)$, so we are in the setup of Section~\ref{sec:local} with $Q_*=Q$.
    We will replace one high excursion by a low route.
    If this lowers the maximum height, it contradicts the first minimum in \eqref{eq:routecost}, and if the maximum remains $F$, the replacement decreases the number of visits at height $F$ and contradicts the second minimum.

    The starting quiver is low, since $\htq(Q)\leq T$.
    The ending quiver is also low, since Lemma~\ref{lem:endpoint} bounds its height by $\lfloor\sqrt{m(Q)}\rfloor\le m(Q)\le T$.
    So the route contains a high segment, preceded by a low quiver $C$ and followed by a low quiver.
    Let $i$ be the mutation producing the first high quiver $P=\mu_iC$, and $j\ne i$ the next mutation vertex. Then $\htq(\mu_iP)=\htq(C)\leq T$ and $\htq(\mu_jP)\leq F$, so the lemmas of Section~\ref{sec:local} apply to $P$, $i$, $j$.
    We distinguish by the edge between $i$ and $j$ in $P$ (its multiplicity is the same in $C$).

    \emph{Multiplicity at least two.} Lemma~\ref{lem:strong} says the route cannot return to a low quiver within height $F$.
    But it does return. This case cannot occur.

    \emph{No edge.} By the first part of Lemma~\ref{lem:zero}, the whole segment together with its return is $i\,j$ or $i\,j\,i$, so the low endpoint is reached from $C$ using only $i,j$.
    By the second part, the same labeled endpoint is reached from $C$ by at most two mutations with every quiver low.
    Replacing the segment removes its high quivers without changing anything later.

    \emph{Multiplicity one.} Relabel $i,j$ as $1,2$ and reverse all arrows if necessary, so that $b_{12}(C)=1$; by Lemma~\ref{lem:prefix} and the remarks before it, this changes neither the heights along the route nor the bounds \eqref{eq:prefixscalar}.
    The route remains minimizing after these transformations, so Proposition~\ref{prop:unit}, including its minimality hypothesis, gives a replacement for the segment in every case.
    In case (a) it is a low route of at most three mutations, and in case (b) it is one of the three listed detours, all of whose quivers are low.
    Each replacement ends at the old endpoint up to a stated relabeling $\pi$.
    Relabel every later mutation vertex $k$ as $\pi^{-1}(k)$; by \eqref{eq:tailtransport} every later quiver is relabeled by $\pi$, so all later heights and the acyclicity of the final quiver are preserved.

    In the second and third cases the modified route either has maximum height less than $F$, or still has maximum $F$ and has strictly fewer visits at that height.
    This contradicts the first or second minimum in \eqref{eq:routecost}, respectively.
    Hence $F\leq \max\{\htq(Q),m(Q)\}$.
\end{proof}

The proof uses the bound $T\ge m(Q)$ only through Lemma~\ref{lem:prefix} and to make the last quiver of the route low, and the bound $T\ge\htq(Q)$ only to make the first quiver low.

\subsection{A finite decision algorithm}
For a four-vertex quiver $Q$ with at least one proper wiggle class, put
\begin{equation}\label{eq:searchbound}
    H(Q)=\max\Bigl(\{\htq(Q)\}\cup\bigl\{M_\omega(Q):[\omega]_{\mathrm{wig}}\text{ a proper wiggle class of }Q\bigr\}\Bigr).
\end{equation}
There are six cyclic orderings to examine, written as $\corder{1,v_2,v_3,v_4}$, where $(v_2,v_3,v_4)$ ranges over the permutations of $(2,3,4)$.

\begin{corollary}\label{thm:algorithm}
    Let $Q$ be a four-vertex quiver.
    If $Q$ has no proper wiggle class, it is mutation-cyclic.
    Otherwise, $Q$ is mutation-acyclic if and only if a sequence of mutations through quivers of height at most $H(Q)$ takes $Q$ to an acyclic quiver, and such a sequence can be chosen of length at most $(2H(Q)+1)^6-1$.
    In particular, mutation-acyclicity of four-vertex quivers is decidable by a finite search.
\end{corollary}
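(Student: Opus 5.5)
The corollary follows from Theorem~\ref{thm:height} together with Lemma~\ref{lem:vortex} and Theorem~\ref{thm:FN}, once we check that the computable quantity $H(Q)$ dominates $\max\{\htq(Q),m(Q)\}$.

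The first assertion, that a quiver with no proper wiggle class is mutation-cyclic, is immediate. If $Q$ were mutation-acyclic, Theorem~\ref{thm:FN} would give it a totally proper wiggle class. A totally proper class is in particular proper, so this is a contradiction. For four vertices this is the vortex case of Lemma~\ref{lem:vortex}.

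Now assume $Q$ has a proper wiggle class. The ``if'' direction of the equivalence is trivial. For ``only if'', suppose $Q$ is mutation-acyclic. By Theorem~\ref{thm:FN} its totally proper class $[\omega]_{\mathrm{wig}}$ exists and is proper, so it is one of the classes in \eqref{eq:searchbound}. Hence $m(Q)=M_\omega(Q)\le H(Q)$. Since also $\htq(Q)\le H(Q)$, we get $\max\{\htq(Q),m(Q)\}\le H(Q)$. Theorem~\ref{thm:height} then supplies a route to an acyclic quiver through quivers of height at most $H(Q)$. (For disconnected $Q$, $m(Q)$ is still defined via the totally proper class, and Theorem~\ref{thm:height} covers that case too.)

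For the length bound, a four-vertex quiver of height at most $H=H(Q)$ is determined by its six entries in $\{-H,\dots,H\}$. There are therefore at most $(2H+1)^6$ such quivers. Among routes from $Q$ to an acyclic quiver within height $H$, take a shortest one. It visits no quiver twice, since otherwise the loop between the repeated visits could be cut out. So it has at most $(2H+1)^6$ quivers and hence at most $(2H+1)^6-1$ mutations.

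Decidability follows. Proper wiggle classes are computable by checking the six cyclic orderings $\corder{1,v_2,v_3,v_4}$ and their wiggles, and the condition that each vertex is proper in some member of the class is a finite check. So $H(Q)$ is computable. A breadth-first search over the finite graph of quivers of height at most $H(Q)$ reachable from $Q$, testing each for acyclicity, then decides the question.

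The only point needing care is the inequality $m(Q)\le H(Q)$. It relies on two facts: the totally proper class is proper, so it appears in the maximum in \eqref{eq:searchbound}; and $M$ is constant on a wiggle class, so $M_\omega(Q)$ is well defined on classes. Everything else is routine bookkeeping.
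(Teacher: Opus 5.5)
Your proposal is correct and follows essentially the same route as the paper. You argue, as the paper does, that the totally proper class is one of the proper classes, so $m(Q)\le H(Q)$ and Theorem~\ref{thm:height} applies. You then obtain the length bound by counting the $(2H(Q)+1)^6$ labeled quivers of height at most $H(Q)$ and noting that a shortest route cannot repeat a quiver, which is exactly the paper's argument.
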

\begin{proof}
    If $Q$ has no proper wiggle class, it is mutation-cyclic by Theorem~\ref{thm:FN}, since a totally proper class is proper.
    If $Q$ is mutation-acyclic, its totally proper class is one of its proper classes, so $m(Q)\le H(Q)$, and Theorem~\ref{thm:height} gives a sequence to an acyclic quiver within height $H(Q)$.
    There are $(2H(Q)+1)^6$ labeled integer quivers of height at most $H(Q)$.
    Starting at $Q$, search its connected component in the mutation graph restricted to these quivers.
    Record each quiver once and test it for acyclicity.
    If an acyclic quiver is found, the recorded mutation sequence proves mutation-acyclicity.
    If the component is exhausted, Theorem~\ref{thm:height} proves that $Q$ is mutation-cyclic.
    The search examines at most four mutations at each recorded quiver.
    A shortest path cannot repeat a quiver, giving the length bound.
\end{proof}

The search can be implemented as follows, storing a predecessor and a mutation vertex for each newly found quiver.
\begin{enumerate}[label=\arabic*.]
    \item If $Q$ has no proper wiggle class, return \emph{mutation-cyclic}.
    Otherwise compute $H=H(Q)$, initialize a queue with $Q$, and mark $Q$ as visited.
    \item If the queue is empty, return \emph{mutation-cyclic}.
    Otherwise remove its next quiver $R$.
    \item If $R$ is acyclic, return \emph{mutation-acyclic} together with the path reconstructed from the predecessor records.
    \item For each vertex $v\in\{1,2,3,4\}$, compute $S=\mu_vR$.
    If $\htq(S)\leq H$ and $S$ has not been visited, mark it, store its predecessor $R$ and mutation vertex $v$, and add it to the queue.
    Return to step~2.
\end{enumerate}
The negative answer in step~2 is returned only after the entire reachable component within the fixed height bound has been exhausted.
Theorem~\ref{thm:height} is what makes a negative answer conclusive.
In practice, one can first test acyclicity, mutation-cyclic full triangles, vortices, and the exclusions of Lemma~\ref{lem:exclusion}, and can process disconnected components separately.
Caching acyclic endpoint signatures $(M,p^2)$ and quotienting states by relabeling can further reduce repeated work; to recover a labeled mutation word, book-keep the relabeling used at each step.
The function \file{decide} in \file{quiver4.py} implements this procedure.

\subsection{The size of the search space}
\begin{corollary}\label{cor:complexity}
    If a four-vertex quiver $Q$ with a proper wiggle class has height $h$, then
    \[
        H(Q)\leq h^4+4h^3+6h^2.
    \]
    For $h\geq1$, the decision search of Corollary~\ref{thm:algorithm} has $O(h^{24})$ states and can be implemented in a time polynomial in $h$.
    This is a pseudo-polynomial bound, since it is polynomial in the numerical arrow multiplicities, but may be exponential in the number of bits used to write them.
    When $h=0$, the quiver is already acyclic.
\end{corollary}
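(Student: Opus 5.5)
We bound $H(Q)$ directly from the Markov polynomial \eqref{eq:markov-invariant}, and then count states and work in the search of Corollary~\ref{thm:algorithm}.

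For the first bound, fix a proper cyclic ordering $\omega$ and write $\oqtuple{\omega}{Q}=(a,b,c,d,e,f)$. Reordering the vertices only permutes the six entries and changes some of their signs, so each coordinate still has absolute value at most $h$. Apply the triangle inequality term by term to $M_\omega(Q)$:
\[
    M_\omega(Q)\le 6h^2+4h^3+h^4,
\]
since there are six square terms, four cubic terms $abd,ace,bcf,def$, and one quartic term $acdf$. Hence every proper-class value is at most $h^4+4h^3+6h^2$. This quantity is also at least $h$ when $h\ge1$, so it dominates $\htq(Q)=h$ as well, and taking the maximum in \eqref{eq:searchbound} gives $H(Q)\le h^4+4h^3+6h^2$.

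For the state count, the search only visits labeled quivers of height at most $H=H(Q)$. There are at most $(2H+1)^6$ of these. For $h\ge1$ we have $H\le 11h^4$, so $(2H+1)^6\le(23h^4)^6=O(h^{24})$.

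For the running time, each visited state needs four mutations, each computed by \eqref{eq:mutation} with $O(1)$ arithmetic on integers of size $O(h^{8})$. It also needs an acyclicity test, which is constant work on four vertices. The visited set and queue can be kept as an array indexed by the tuple, or as a hash table, at polylogarithmic cost per access. Computing $H(Q)$ itself means checking properness of the six cyclic orderings and evaluating $M_\omega$, which is constant work. The total time is therefore $O(h^{24})$ up to polylogarithmic factors, so polynomial in $h$.

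The remarks about pseudo-polynomiality need no separate argument: an input of height $h$ takes only $O(\log h)$ bits to write, so a bound polynomial in $h$ can be exponential in the bit length.

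For $h=0$, all $b_{ij}$ vanish, so $Q$ has no arrows and is trivially acyclic.

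The only point needing care is to make sure the sign changes under reordering never increase absolute values. They do not, because $b_{v_iv_j}=\pm b_{ij}$.
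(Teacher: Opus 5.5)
Your proposal is correct and follows the paper's proof essentially step for step. It bounds the eleven monomials of $M_\omega$ termwise by the triangle inequality, notes that $h^4+4h^3+6h^2$ dominates $h$, counts at most $(2H+1)^6=O(h^{24})$ states, and does constant work per state on integers of magnitude $O(h^8)$. The only difference is cosmetic: you store visited states in an array or hash table, where the paper uses a balanced search tree, and this changes only the polylogarithmic factor in the running time.
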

\begin{proof}
    In every cyclic ordering, the six entries have absolute value at most $h$.
    Formula~\eqref{eq:markov-invariant} has six quadratic terms, four cubic terms and one quartic term.
    The triangle inequality therefore gives
    \[
        |M_\omega(Q)|\leq 6h^2+4h^3+h^4.
    \]
    For integer $h\geq 1$, the right side also bounds $h$, so it bounds $H(Q)$.
    Corollary~\ref{thm:algorithm} now gives at most
    \[
        \bigl(2(h^4+4h^3+6h^2)+1\bigr)^6=O(h^{24})
    \]
    states.
    If $h=0$, all entries and all Markov values vanish, so $H(Q)=0$ and there is just one state.

    For the running time, each state requires at most four mutations and an acyclicity test.
    A tested mutation has entries of magnitude at most $H(Q)+H(Q)^2$, even if it is discarded for exceeding the bound.
    Thus all arithmetic uses integers with $O(\log(h+1))$ bits when $h\geq1$.
    Store the visited six-tuples in a balanced search tree.
    A lookup then takes $O(\log N)$ comparisons for $N$ states, and each comparison takes $O(\log(h+1))$ bit operations.
    Since $N=O(h^{24})$, lookup and exact arithmetic take time polynomial in $\log(h+1)$ per state.
    Thus, with basic integer arithmetic and balanced-tree storage, one obtains $O(h^{24}\log^2(h+1))$ bit operations and $O(h^{24}\log(h+1))$ bits of storage, including the queue and predecessor records.
    In particular, the running time is polynomial in $h$, as claimed.
\end{proof}

\subsection{An example and a conjecture}
The search of Corollary~\ref{thm:algorithm} can also prove mutation-cyclicity when the obstructions of Section~\ref{sec:background} do not apply within the search range.

\begin{example}\label{ex:height3}
    Let $\qtuple{Q}=(-3,-2,2,0,3,-3)$, a connected quiver of height three.
    It has a single proper wiggle class, with Markov value $41$, and $p(Q)^2=225$, so $H(Q)=41$.
    The pair $(41,225)$ is realized by the connected acyclic quiver $(0,0,3,5,1,1)$ in its topological ordering $1<2<3<4$, so $Q$ is not Markov-excluded.
    The component of $Q$ in the mutation graph restricted to quivers of height at most $41$ consists of $238$ quivers, none of which is acyclic, so $Q$ is mutation-cyclic by Corollary~\ref{thm:algorithm}.
    None of these $238$ quivers contains a mutation-cyclic full triangle, is a vortex, or is Markov-excluded, so within this range the obstructions of Section~\ref{sec:background} do not detect $Q$.
    The mutation class of $Q$ contains oriented four-cycles such as $(3,-2,0,0,9,-1)$, with multiplicities $3,9,1,2$ around the cycle.
    These claims are checked by \file{examples.py}.
\end{example}

We do not know whether the term $m(Q)$ in Theorem~\ref{thm:height} is needed.
\begin{conjecture}\label{conj:height}
    Every mutation-acyclic four-vertex quiver $Q$ admits a mutation sequence to an acyclic quiver along which every arrow multiplicity is at most $\htq(Q)$.
\end{conjecture}
If Conjecture~\ref{conj:height} holds, the search of Corollary~\ref{thm:algorithm} can be run with the bound $\htq(Q)$ in place of $H(Q)$, and it has at most $(2h+1)^6=O(h^6)$ states for input height $h\ge1$.
The program \file{conjecture\_evidence.py} gives the following evidence.
\begin{enumerate}[label=(\roman*)]
    \item For every connected four-vertex quiver $Q$ of height at most three, either a search within height $\htq(Q)$ reaches an acyclic quiver, or $Q$ is mutation-cyclic.
    There are $34$, $633$ and $4276$ such quivers of heights one, two and three up to relabeling.
    The mutation-cyclic ones are certified by a mutation-cyclic full triangle, a vortex or a Markov exclusion within three mutations, except for the dreaded torus at height two and four quivers at height three, which are certified by Corollary~\ref{thm:algorithm}.
    Up to relabeling and reversal of all arrows, these four quivers are mutation-equivalent to the quiver of Example~\ref{ex:height3}.
    \item Starting from each connected acyclic quiver with entries in $\{0,1,2\}$, we computed every quiver reachable through quivers of height at most $10$, a total of $94{,}344$ quivers in $320$ regions, and checked that each of them reaches an acyclic quiver inside its region without exceeding its own height.
    \item The same holds for $2000$ random mutation-acyclic quivers of heights between $6$ and $60$, obtained by random mutation sequences from acyclic quivers.
\end{enumerate}
Conjecture~\ref{conj:height} holds for every mutation class that contains an acyclic quiver with at least two arrows between every pair of vertices.
Following \cite[Definition~3.4]{felikson-tumarkin2018}, call a mutation decreasing if it lowers the multiplicity of some arrow and raises none.
By \cite[Corollary~3.5 and Remark~3.6]{felikson-tumarkin2018}, every quiver in such a class reaches an acyclic quiver by finitely many decreasing mutations.
No multiplicity increases along this sequence, so it never exceeds $\htq(Q)$.

\begin{remark}\label{rem:Gx}
    The method of Theorem~\ref{thm:height} does not prove Conjecture~\ref{conj:height}, because the local analysis of Section~\ref{sec:local} uses $m\le T$, and $m(Q)$ can be much larger than $\htq(Q)$.
    For $x\ge1$, the quiver $G_x$ with $\qtuple{G_x}=(x,x,0,1,x,-1)$ is acyclic with topological ordering $1<2<4<3$ and $m(G_x)=x^3+4x^2+x+2$ \claim{S5.Gx-acyclic, S5.Gx-markov}, while $\qtuple{\mu_4G_x}=(x,x,0,x+1,-x,1)$ has height $x+1$ \claim{S5.Gx-mu4}.
    Thus the height grows linearly along this family of mutation-acyclic quivers, while $m$ grows cubically.
    For acyclic quivers $m$ can even be of order $\htq(Q)^4$, since the acyclic quiver with all six entries equal to $h$ in a topological ordering has $m=h^4+4h^3+6h^2$.
\end{remark}

\subsection{Comparing mutation classes}
\begin{corollary}\label{cor:equivalence}
    Mutation equivalence up to vertex relabeling is decidable for pairs of integral four-vertex quivers of which at least one is mutation-acyclic.
    More precisely, there is an algorithm that, given two four-vertex quivers, either decides whether they are mutation equivalent up to relabeling, or reports that both are mutation-cyclic.
\end{corollary}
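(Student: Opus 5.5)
The algorithm first runs the decision procedure of Corollary~\ref{thm:algorithm} on each input quiver $Q_1$ and $Q_2$. If both are reported mutation-cyclic, it stops with that report. Otherwise, after swapping if necessary, $Q_1$ is mutation-acyclic, and the search returns an explicit mutation sequence from $Q_1$ to an acyclic quiver $A_1$.

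If $Q_2$ is reported mutation-cyclic, the answer is ``not equivalent'', since mutation-acyclicity is invariant under mutation and relabeling. If $Q_2$ is also mutation-acyclic, the search likewise gives an acyclic $A_2$ mutation equivalent to $Q_2$. Then $Q_1\sim Q_2$ up to relabeling if and only if $A_1\sim A_2^\pi$ for some $\pi\in S_4$. This uses \eqref{eq:tailtransport}: a relabeling transports mutation sequences, so mutation equivalence up to relabeling is an equivalence relation compatible with replacing each quiver by a representative of its class.

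It remains to decide whether the two acyclic quivers $A_1$ and $A_2$ are equivalent up to relabeling. By Lemma~\ref{lem:acyclicreps}, they are equivalent exactly when $A_2$ is obtained from $A_1$, up to relabeling, by a sequence of mutations at sources and sinks. Such mutations only reverse arrows at a source or sink and never change the multiplicities. Hence every quiver reachable from $A_1$ this way has the same underlying weighted graph as $A_1$, so there are at most $2^6$ orientations times $24$ labelings of them. The algorithm enumerates this finite set by a breadth-first search from $A_1$, mutating only at sources and sinks, and checks whether some relabeling $A_2^\pi$ appears. By Lemma~\ref{lem:acyclicreps}, success is equivalent to mutation equivalence up to relabeling. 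Conversely, a source/sink sequence is in particular a mutation sequence, so a found match certifies equivalence.

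The only subtle point is reading Lemma~\ref{lem:acyclicreps} correctly. Caldero--Keller connectivity is a statement about acyclic seeds in the exchange graph, and the paper extracts from it the claim that mutation-equivalent acyclic quivers are joined by source/sink mutations up to relabeling; I take that claim as stated. Also, although $Q_2$ may a priori be mutation-cyclic, termination is guaranteed because Corollary~\ref{thm:algorithm} always halts with a correct answer on each input.
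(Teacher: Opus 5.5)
Your proposal is correct and follows the paper's proof essentially step for step. You run the decision procedure of Corollary~\ref{thm:algorithm} on both inputs and dispose of the cases where at least one is mutation-cyclic, then use Lemma~\ref{lem:acyclicreps} to reduce equivalence of the two acyclic representatives to a finite search over the at most $2^6$ orientations reachable by source and sink mutations, each compared against the $24$ relabelings of the other representative.
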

\begin{proof}
    Apply Corollary~\ref{thm:algorithm} to both inputs.
    If both are mutation-cyclic, report this.
    If exactly one is mutation-acyclic, they cannot be mutation equivalent.
    Otherwise both are mutation-acyclic, and the searches give acyclic representatives $A,B$.
    The original inputs are mutation equivalent up to relabeling exactly when $A,B$ are.

    By Lemma~\ref{lem:acyclicreps}, if $A$ and $B$ are mutation equivalent up to relabeling, then $B$ is reached from $A$, up to relabeling, by mutations at sources and sinks, which only reorient the underlying weighted graph of $A$.
    That graph has at most six edges, hence at most $2^6=64$ orientations.
    Starting at $A$, search all orientations reachable by source or sink mutations.
    At each one, compare with the $24$ relabelings of $B$.
    Finding a match proves equivalence; exhausting this finite search without a match disproves it.
\end{proof}

If both inputs are mutation-cyclic, this argument provides no decision method for their mutation equivalence.
The reduction to acyclic representatives in the second paragraph works in every rank, so only the decision procedure of the first paragraph is specific to rank four.

\section{The bounded classification}\label{sec:correction}
Theorem~2.9 of \cite{AHJL2025} claims that a four-vertex quiver with arrow multiplicities at most two is mutation-cyclic exactly when its mutation class contains a mutation-cyclic full triangle, or the quiver is the dreaded torus $Q_{\mathrm T}$ with $\qtuple{Q_{\mathrm T}}=(-2,1,1,-1,-1,-1)$.
The double-arrow box disproves this criterion.
The computation in that paper already included the box as a mutation-cyclic seed.
Indeed, the proof of \cite[Theorem~2.9]{AHJL2025} lists the box and the dreaded torus as special cases, and arrives at the same counts $534$ and $133$ for connected quivers as Theorem~\ref{thm:census} below.
So the issue is the stated criterion rather than the reported class counts.
Disconnected quivers were not considered in \cite{AHJL2025}, so the second row of the table in Theorem~\ref{thm:census} is new.
We give a corrected criterion with short certificates and recover those counts.

\begin{proposition}\label{prop:box}
    The double-arrow box
    \[
        \qtuple{Q_\square}=(-2,0,2,-2,0,-2)
    \]
    is mutation-cyclic.
    It is not isomorphic to the dreaded torus, and every full triangle in every quiver in its mutation class is mutation-acyclic.
\end{proposition}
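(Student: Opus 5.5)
The plan is to obtain all three claims from one explicit, induction-stable description of the mutation class of $Q_\square$.

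\emph{Non-isomorphism.} This step is immediate. Relabeling preserves the multiset of absolute values of the entries. Those of $Q_\square$ are $\{2,2,2,2,0,0\}$, while those of $Q_{\mathrm T}$ are $\{2,1,1,1,1,1\}$. The two quivers are not even mutation equivalent up to relabeling, because the gcd of the entries is a mutation invariant (Remark~\ref{rem:signscope}) and equals $2$ for $Q_\square$ and $1$ for $Q_{\mathrm T}$.

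\emph{Arithmetic constraints.} First I would record what survives mutation modulo $4$. Every entry of $Q_\square$ is even, so every product $[b_{ik}]_+[b_{kj}]_+$ in \eqref{eq:mutation} is divisible by $4$. Hence each entry changes only by a sign and a multiple of $4$. It follows that every quiver in the class, with the original labels, satisfies
\[
b_{13}\equiv b_{24}\equiv0,\qquad b_{12}\equiv b_{14}\equiv b_{23}\equiv b_{34}\equiv2 \pmod 4.
\]
Each full triangle contains exactly one of the diagonals $\{1,3\}$ or $\{2,4\}$. So its multiplicities have the form $(2a,2b,2c)$ with $a,b$ odd and $c$ even; in particular two of its sides are nonzero. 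This does not decide the triangle question, since for instance an oriented $(6,6,4)$ is mutation-cyclic. It does, however, cut down the cases to check.

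\emph{Main step: an explicit family.} I would describe the whole mutation class as an explicit family. Using the symmetry of the box, namely the rotation $1\to4\to3\to2\to1$ combined with reversal, I would guess the family by computing the first few reduced mutation sequences. I expect a two-parameter family. Its entries should be given by Chebyshev-type sequences $x_{n+1}=2x_n-x_{n-1}$, or more likely $x_{n+1}=4x_n-x_{n-1}$ after dividing by $2$, with prescribed sign pattern and with $|p|=4$ throughout. The key lemma would be that this family, up to relabeling and reversal, is closed under each of the four mutations, proved by direct substitution into \eqref{eq:mutation}. This is exactly the kind of identity that \file{check\_proofs.py} can verify symbolically.

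\emph{Deriving the claims from the family.} Once closure is known, two things remain, each a sign and size check on the parametrized entries. First, no member of the family is acyclic, because each member contains a directed cycle; I expect an oriented $4$-cycle or a non-vortex orientation with a long cycle. This gives mutation-cyclicity directly. As an independent check, one can run the finite search of Corollary~\ref{thm:algorithm} from $Q_\square$ within height $H(Q_\square)$ and confirm that no acyclic quiver is reached. Second, in every member each full triangle either is not an oriented triangle or has Markov constant $4(a^2+b^2+c^2-2abc)>4$. The rank-three criterion of Proposition~\ref{prop:rank3} then shows that every full triangle is mutation-acyclic.

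\emph{Expected obstacle.} The main difficulty is finding the correct closed form of the family and proving its closure under mutation. The class is infinite, so the triangle statement cannot be settled by enumeration alone. If a clean two-parameter description does not materialize, my fallback is a descent argument: show that from any quiver in the class some mutation strictly lowers the sum of absolute values unless the quiver is a relabeled copy of $Q_\square$ itself. Then prove the triangle condition inductively along the ascending direction, checking that each ascending mutation preserves the property that every oriented full triangle has Markov constant exceeding $4$.
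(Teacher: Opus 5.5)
Your non-isomorphism argument is fine; the paper just counts edges, four against six. Your congruences modulo $4$ are also correct. But the proposal does not yet prove either substantive claim.

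\textbf{Mutation-cyclicity.} You make this depend on a family you have not found, and offer the search of Corollary~\ref{thm:algorithm} only as a check. That search would in fact be a legitimate proof, since Theorem~\ref{thm:height} does not use this proposition. But you missed the one-line argument the paper gives. The quiver $\mu_1Q_\square$, with $\qtuple{\mu_1Q_\square}=(2,0,-2,-2,4,-2)$, is connected. Its only proper wiggle class is that of $\corder{1,2,4,3}$, and there $M=0<3$. By Lemma~\ref{lem:endpoint}, no connected acyclic quiver has this value in a topological ordering. So $\mu_1Q_\square$ is Markov-excluded, and Lemma~\ref{lem:exclusion} applies.

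\textbf{The triangle assertion.} This is the genuine gap. Your key lemma is only a guess, and its predicted shape does not fit the class.
\begin{itemize}
\item The Pfaffian is $p(Q_\square)=(-2)(-2)-0+2\cdot(-2)=0$, so $|p|=0$ throughout the class, not $4$.
\item The class contains quivers with four, five and six edges. Examples are $\qtuple{\mu_{\mword{1,2}}Q_\square}=(-2,0,6,2,-4,6)$ and the complete quiver $\qtuple{\mu_{\mword{1,2,1,2}}Q_\square}=(-2,4,10,-2,-8,6)$.
\item The class branches quickly. The words $\mword{1,2,1,2}$, $\mword{1,2,1,3}$, $\mword{1,2,1,4}$, $\mword{1,2,4,2}$, $\mword{1,2,4,1}$ give pairwise non-isomorphic quivers of heights $10,20,46,82,128$.
\end{itemize}
So you should not expect a single two-parameter family closed under all four mutations.

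Your fallback has the same hole. The statement ``every oriented full triangle has Markov constant above $4$'' is not a hypothesis that survives mutation at the vertex off a triangle. That mutation rewrites the triangle's entries using products of the edges to the fourth vertex. The constants also drift along the class. Both oriented triangles of $\mu_{\mword{1,2}}Q_\square$ have constant $8$. By contrast, $\qtuple{\mu_{\mword{1,2,1,3}}Q_\square}=(2,0,-6,-2,20,-6)$ contains an oriented triangle with multiplicities $(2,20,6)$ and constant $200$.

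An induction needs a stronger shape invariant that mutation preserves. One example would be a distinguished point of return together with inequalities among the weights, in the style of Warkentin's forks. That is the missing idea. The paper does not reprove this part; it cites \cite[discussion after Corollary~5.5]{ervin2024}.
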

\begin{proof}
    The quiver $\mu_1Q_\square$, with $\qtuple{\mu_1Q_\square}=(2,0,-2,-2,4,-2)$, is connected and has a single proper wiggle class, in which $M=0<3$.
    By Lemma~\ref{lem:endpoint}, no connected acyclic quiver has Markov value $0$ in a topological ordering, so $\mu_1Q_\square$ is Markov-excluded, and $Q_\square$ is mutation-cyclic by Lemma~\ref{lem:exclusion}.
    The box is the oriented four-cycle $1\to4\to3\to2\to1$ with all multiplicities two, the case $a=b=2$ of the four-cycles with multiplicities $a,b,a,b$ shown to be mutation-cyclic in \cite[Example~6.11]{warkentin2014}; see also \cite[Example~6.10]{neville2025}.
    The name is from \cite{AHJL2025}.
    The assertion about all full triangles is \cite[discussion after Corollary~5.5]{ervin2024}.
    The box has four underlying edges and the dreaded torus has six, so they are not isomorphic.
\end{proof}
\begin{theorem}[Correction of {\cite[Theorem 2.9]{AHJL2025}}]\label{thm:census}
    Let $Q$ be a four-vertex quiver of height at most two.
    \begin{enumerate}[label=(\roman*)]
        \item $Q$ is mutation-acyclic if and only if a mutation sequence of length at most three takes it to an acyclic quiver.
        Such a sequence can be chosen so that every intermediate quiver has height at most two.
        \item $Q$ is mutation-cyclic if and only if either $Q$ is isomorphic to the dreaded torus, or a sequence of at most two mutations takes $Q$ to a quiver that
        \begin{enumerate}[label=(\alph*)]
            \item contains a mutation-cyclic full triangle, or
            \item is a vortex, or
            \item is connected and Markov-excluded.
        \end{enumerate}
    \end{enumerate}
    The sufficiency of the conditions in (ii) holds for every four-vertex quiver, and the height restriction is needed only for necessity.
    Up to vertex relabeling, with reversal of all arrows not quotiented out, the counts are
    \begin{center}
    \begin{tabular}{lrrr}\toprule
        & Mutation-acyclic & Mutation-cyclic & Total\\\midrule
        Connected & 534 & 133 & 667\\
        Disconnected & 27 & 1 & 28\\\bottomrule
    \end{tabular}
    \end{center}
\end{theorem}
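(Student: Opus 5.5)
The plan is to separate sufficiency, which holds in every height, from the finite verification, which needs the height restriction. For sufficiency in (ii), each certificate implies mutation-cyclicity for an arbitrary four-vertex quiver. A mutation-cyclic full triangle gives it by Proposition~\ref{prop:rank3}, and a vortex by Lemma~\ref{lem:vortex}. A connected Markov-excluded quiver is handled by Lemma~\ref{lem:exclusion}. The dreaded torus is known to be mutation-cyclic \cite[Section~7.2]{matherne-muller2015}. Alternatively, Corollary~\ref{thm:algorithm} applies to it: it is not a vortex, so it has a proper class, and exhausting its component within height $H(Q_{\mathrm T})$ finds no acyclic quiver. Since mutation-cyclicity is a mutation invariant, reaching any such certificate within two mutations certifies $Q$. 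Conversely, the sufficiency in (i) is trivial.

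For necessity, I would enumerate the $5^6$ labeled quivers of height at most two, reduced modulo the $S_4$ action to representatives up to relabeling (reversal not quotiented). On each representative I would run a breadth-first search of depth three, restricted to quivers of height at most two, looking for an acyclic quiver. Every quiver where this succeeds is mutation-acyclic and satisfies the strengthened form of (i). For every remaining quiver I would search, over all mutation words of length at most two with no height restriction, for a quiver satisfying (a), (b) or (c). The Markov exclusion is a finite check by Lemma~\ref{lem:exclusion}: enumerate the proper wiggle classes among the six cyclic orderings, then the nonnegative acyclic tuples with sum of squares at most $M_\omega$. The claim to verify is that every quiver is caught by exactly one of these two procedures, except the dreaded torus, which I would treat separately as above. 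That yields both directions of (i) and (ii): a quiver not reaching an acyclic quiver in three low steps carries a cyclicity certificate, hence is mutation-cyclic. The box $Q_\square$ of Proposition~\ref{prop:box} should be certified by (c) after one mutation, as in its proof.

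For disconnected quivers, I would argue directly as well as by computer. Components have at most three vertices. A three-vertex component of height at most two is mutation-cyclic only as the oriented triangle with all multiplicities $2$, by Proposition~\ref{prop:rank3} (Markov constant $4$). This is case (a) with zero mutations and accounts for the single disconnected cyclic class. Otherwise the descent used in the proof of Theorem~\ref{thm:height} makes each component acyclic within height two in at most one mutation, since $xy\le z$ then holds for the largest edge. The counts come from tallying orbits by connectivity and verdict, and should reproduce $534/133$ and $27/1$.

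The main obstacle is not conceptual but making sure the finite search is exhaustive and that the two-mutation certificate search really covers every cyclic quiver other than the torus. If some quiver escaped both, I would first check whether it lies in the torus class or needs a longer certificate. In that case the statement would have to list it, as it lists $Q_{\mathrm T}$. I would implement the classification twice, independently (as \file{check\_census.py} does), and record for each class an explicit certificate word.
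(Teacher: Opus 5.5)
Your plan is correct and follows the paper's proof. Sufficiency in (ii) comes from Proposition~\ref{prop:rank3} and Lemmas~\ref{lem:vortex} and~\ref{lem:exclusion}, and necessity from the same finite enumeration: relabeling representatives, a depth-three search within height two, and at-most-two-mutation certificates for every remaining class except the dreaded torus. The one difference is how you certify the torus. The paper simply observes that each of its four mutations is a relabeling of $Q_{\mathrm T}$, which contains a directed cycle; this is simpler than appealing to Corollary~\ref{thm:algorithm} and keeps the census independent of Theorem~\ref{thm:height}.
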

\begin{proof}
    The ``if'' direction of (i) is by definition.
    The ``if'' direction of (ii) holds for any four-vertex quiver.
    Mutation-cyclicity is a property of the mutation class, the dreaded torus is mutation-cyclic because each of its four mutations is a relabeling of $Q_{\mathrm T}$, which contains the directed cycle $1\to3\to2\to1$, and conditions (a), (b), (c) each force mutation-cyclicity by Proposition~\ref{prop:rank3}, Lemma~\ref{lem:vortex} and Lemma~\ref{lem:exclusion} respectively.

    The ``only if'' directions are a finite enumeration.
    For each representative we record either a short path to an acyclic quiver, or a short path to a quiver with one of the obstructions in (ii).
    The supplementary program \file{census.py} performs both searches and writes the certificates to \file{census\_certificates.json}, which contains no interpreter versions or timestamps, so that its checksum is reproducible.
    The program \file{check\_census.py} checks the representative coverage and every certificate using only the independent implementation of mutation, relabeling, acyclicity, full triangles, vortices, proper wiggle classes and Markov values in \file{reference\_q4.py}, which shares no code with the generator.
    For a Markov-exclusion certificate it recomputes every proper wiggle class and its pair $(M,p^2)$, together with the finite table of endpoint pairs used to rule out a match.
    Both programs use exact integer arithmetic.
    The census proof uses these certificates and the obstructions above, independently of Theorem~\ref{thm:height}.

    Enumerate the $5^6=15{,}625$ tuples with entries in $\{-2,-1,0,1,2\}$.
    Replace each by the lexicographically least of its $24$ vertex relabelings.
    This leaves $695$ representatives.

    For each representative, examine all $1+4+4^2+4^3=85$ mutation sequences of length at most three.
    Discard a sequence if it leaves height two.
    There are $561$ representatives for which a remaining sequence ends at an acyclic quiver.
    Their shortest lengths are
    \[
    \begin{array}{c|rrrr}
        \text{length}&0&1&2&3\\\hline
        \text{number}&425&118&16&2
    \end{array}
    \]

    Of the $134$ remaining representatives, one is disconnected.
    It is $(-2,0,2,0,-2,0)$, an oriented triangle of multiplicities $(2,2,2)$ and an isolated vertex.
    It satisfies (a) with no mutation.
    One connected representative is the dreaded torus.
    For each of the other $132$, a sequence of at most two mutations reaches a connected quiver satisfying (a), (b) or (c).
    The first two conditions are checked directly; for (c), use the finite computation of Lemma~\ref{lem:exclusion} for each proper class.
    For each representative we take a shortest certificate, and among certificates of the same length we prefer (a), then (b), then (c).
    With this rule, (a), (b) and (c) are used for $45$, $40$, and $47$ representatives, respectively.
    For $127$ representatives no mutation is needed; four require one mutation and one requires two.
    Thus every representative not found by the acyclic search satisfies one of the conditions in (ii), which proves both ``only if'' directions.
    Separating the representatives by connectedness gives the stated counts.
\end{proof}

\begin{remark}\label{rem:censussharp}
    The bound three in Theorem~\ref{thm:census}(i) is attained by
    \[
        (-2,-1,2,-1,-1,-1)\xrightarrow{\mword{4,1,3}}\text{acyclic},
        \qquad
        (-2,1,1,-2,1,-1)\xrightarrow{\mword{3,2,4}}\text{acyclic}.
    \]
    Neither quiver has an acyclic endpoint after at most two mutations, even if the intermediate height is unrestricted; this is checked by the finite list of words of length at most two.
    For the negative certificate system in part~(ii), the representative
    \[
        \qtuple{R}=(-2,-1,2,-1,-1,-2)
    \]
    requires two mutations.
    Indeed, $\mu_{\mword{1,3}}R$, whose tuple is $(3,-1,-2,1,3,0)$, contains the cyclic triangle on $\{1,2,4\}$ with multiplicities $(3,3,2)$ and Markov constant $4$.
    Neither $R$ nor any of its four one-mutation neighbors is a vortex or contains a mutation-cyclic full triangle.
    Each has a single proper wiggle class with $(M,p^2)=(3,1)$, a pair realized by the connected acyclic tuple $(1,0,0,1,0,1)$, so none is Markov-excluded.
    These examples, including the absence of shorter certificates, are checked by \file{check\_census.py}.
\end{remark}

\appendix
\section{Proofs of the local lemmas}\label{app:lemmas}
Throughout the appendices, retain the setup of Section~\ref{sec:local}, so that
$Q_*$ is connected and mutation-acyclic, $m=m(Q_*)$, $T\ge m$, and $F=T+1$.
Every quiver considered belongs to the mutation class of $Q_*$, so
\[
 3\le m\le T,\qquad 2|p|<m
\]
by \eqref{eq:prefixscalar}, where $m$ is the Markov invariant of the totally proper class.
For each local lemma, relabel the specified vertices as $1,2$.

Every calculation in these appendices is stated as a claim in the file \file{proof\_claims.py} of the supplement, and is cited here by its label in typewriter font.
The program \file{check\_proofs.py} verifies each claim exactly and for all integer values of its parameters.
Polynomial identities are checked by expansion.
Inequalities and contradictions are checked with explicit certificates, which write the goal as a nonnegative combination of products of the hypotheses and of squares, modulo the stated equations, sometimes after splitting on whether an integer quantity is at least $1$ or at most $0$.
Mutation formulas are checked by symbolic mutation, after certifying the sign of every entry that the mutation formula \eqref{eq:mutation} needs.
Statements about proper wiggle classes, acyclicity, vortices and oriented triangles depend only on the signs of the entries, and are checked by enumerating all sign patterns allowed by the hypotheses.
Each claim lists the hypotheses it uses, including the standing bounds $F\ge4$, $3\le m\le F-1$ and $2|p|<m$ where needed.
The case analysis itself, that is, the choice of cases and the argument that they cover all possibilities, is written out below and is not checked by machine.
The program \file{bruteforce\_local.py} tests it independently, by enumerating all excursions for $T\le6$ using only the constraints that the proofs use.
The verification relies on the exact arithmetic of \file{polynomial.py}, the certificate checker in \file{certificates.py}, the symbolic mutation in \file{symbolic.py}, and the functions of \file{quiver4.py} that compute proper wiggle classes, acyclicity and vortices.
The hand calculations behind the claims, in the form of an earlier version of these appendices, are reproduced with the same labels in the file \file{APPENDIX\_CALCULATIONS.md} of the supplement.

We first collect two observations used in several proofs.
\begin{lemma}\label{lem:appendixorder}
    Let $P$ be mutation-acyclic, with
    $\qtuple{P}=(a,b,-c,d,E,f)$.
    Assume $b,c,f>0$ and $a\ge0$, and assume moreover that one of the following holds.
    \begin{enumerate}[label=(\roman*)]
        \item $d\ge0$; the entry $E$ may have any sign.
        \item $d<0$, $a=0$, and $E\le0$.
    \end{enumerate}
    Then $P$ has exactly one proper wiggle class, and this class contains
    $\omega_0=\corder{1,2,3,4}$.
    In particular, $m=M_{\omega_0}(P)$.
\end{lemma}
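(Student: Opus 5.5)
We will argue entirely from the signs of the six entries, using Theorem~\ref{thm:FN} only at the end. In the fixed labels the hypotheses give arrows $1\to3$ (since $b>0$), $4\to1$ (since $b_{14}=-c<0$) and $3\to4$ (since $f>0$). So the full triangle on $\{1,3,4\}$ is the oriented triangle $1\to3\to4\to1$. Vertex $2$ is attached by $1\to2$ when $a>0$, by $2\to3$ or $3\to2$ according to the sign of $d$, and by $2\to4$ or $4\to2$ according to the sign of $E$.

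The plan has three steps.

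\emph{Step 1: existence.} We check that $[\omega_0]_{\mathrm{wig}}$ is proper, listing the two-paths through each vertex.
\begin{itemize}[label=--]
\item Vertex $1$: the paths are $4\to1\to3$ and possibly $4\to1\to2$. Both are clockwise in $\corder{1,2,3,4}$.
\item Vertex $3$: the path $1\to3\to4$ is clockwise. In case (i) there is also $2\to3\to4$ when $d>0$, which is clockwise as well.
\item Vertex $4$: the path $3\to4\to1$ is clockwise. Depending on the sign of $E$, there is also $2\to4\to1$ or $3\to4\to2$, and both are clockwise.
\item Vertex $2$, case (i): the possible paths are $1\to2\to3$, $1\to2\to4$ and $4\to2\to3$, all clockwise.
\end{itemize}
Case (ii) needs a wiggle. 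There $a=0$, $d<0$ and $E\le0$, so vertex $2$ is a sink. But vertex $3$ has the path $1\to3\to2$, which is not clockwise in $\omega_0$. Since $1$ and $2$ are not joined, we wiggle to $\corder{2,1,3,4}$, in which $1\to3\to2$ is clockwise. Vertex $3$ is then proper in some member of the class, and the other vertices were already proper in $\omega_0$, so the class is proper. All of these checks depend only on sign patterns, so they can also be delegated to an enumeration as in the appendices.

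\emph{Step 2: uniqueness.} Let $[\omega]_{\mathrm{wig}}$ be any proper class. Wiggles only swap non-adjacent neighbours, and $1,3,4$ are pairwise joined, so every ordering in the class induces the same cyclic order on $\{1,3,4\}$. Properness at vertex $3$ (path $1\to3\to4$) forces this induced order to be $\corder{1,3,4}$. Up to rotation, $\omega$ is therefore obtained by inserting $2$ into one of the three gaps, giving $\corder{1,2,3,4}$, $\corder{1,3,2,4}$ or $\corder{1,3,4,2}$.

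We then rule out the bad gaps using the arrows at $2$:
\begin{itemize}[label=--]
\item If $d>0$, the path $2\to3\to4$ forces $2$ to lie in the gap between $4$ and $3$ that passes through $1$. This rules out $\corder{1,3,2,4}$.
\item If $d<0$, the path $1\to3\to2$ similarly excludes the gap between $3$ and $4$.
\item The arrow at $4$ (either $2\to4\to1$ or $3\to4\to2$) excludes one further gap.
\item When $a>0$ and the sign of $E$ leaves a choice, the path $4\to1\to2$ or $1\to2\to\cdot$ decides it.
\end{itemize}
Whatever gap survives must be wiggle equivalent to $\omega_0$. For example, $\corder{1,3,4,2}$ is equivalent to $\omega_0$ when $2$ is not joined to $1$, because it is a rotation of $\corder{2,1,3,4}$. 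When an inserting position is not reachable by wiggles, one of the listed two-paths makes some vertex improper in every ordering of that class, since wiggles never move $2$ past a vertex it is joined to. We will organise this as a short table indexed by $(\operatorname{sgn}a,\operatorname{sgn}d,\operatorname{sgn}E)$ under hypotheses (i) and (ii).

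\emph{Step 3: the Markov value.} Since $P$ is mutation-acyclic, Theorem~\ref{thm:FN} gives a totally proper class, which is in particular proper. By Step 2 it equals $[\omega_0]_{\mathrm{wig}}$, and hence $m=M_{\omega_0}(P)$ by the definition of $m$ and the wiggle invariance of $M$.

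The main obstacle is Step 2. Properness is a condition on the whole class, so each vertex may use a different representative, and one must argue carefully that no wiggle can rescue a wrong gap. I would handle this by noting that the positions of $2$ relative to the vertices joined to it are invariant under wiggles, and that each two-path constraint at a vertex $k$ involves only vertices joined to $k$. The full sign enumeration can then be reduced to checking finitely many sign patterns.
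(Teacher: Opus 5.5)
Your route is the paper's. The oriented triangle $1\to3\to4\to1$ fixes the induced cyclic order on $\{1,3,4\}$ throughout any proper class, which leaves only $\corder{1,2,3,4}$, $\corder{1,3,2,4}$ and $\corder{1,3,4,2}$. A sign analysis on $(a,d,E)$, which the paper delegates to the machine-checked claims \claim{A0.ord1, A0.ord2}, then shows that every proper class contains $\omega_0$, and Theorem~\ref{thm:FN} gives $m=M_{\omega_0}(P)$. The only structural difference is that you verify existence of a proper class by hand in Step~1, whereas the paper takes existence from the totally proper class supplied by Theorem~\ref{thm:FN}.

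\textbf{The $d<0$ bullet in Step~2 is wrong.} It also contradicts your own Step~1. For $d<0$, the path $1\to3\to2$ is clockwise in $\corder{1,3,2,4}$ and $\corder{1,3,4,2}$ but not in $\omega_0$. So it excludes the gap between $1$ and $3$, that is, $\omega_0$ itself, and not the gap between $3$ and $4$.

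The correct bookkeeping in case (ii) is as follows.
\begin{enumerate}[label=(\arabic*)]
\item Vertex $3$ is proper only in $\corder{1,3,2,4}$ and $\corder{1,3,4,2}$.
\item When $E<0$, the path $3\to4\to2$ makes vertex $4$ improper in $\corder{1,3,2,4}$, which is then a singleton class.
\item $\corder{1,3,4,2}$ is wiggle equivalent to $\omega_0$ because $a=0$. When $E=0$, all three orderings form one class.
\end{enumerate}
The conclusion survives, but only through the hypothesis $a=0$, which is exactly why (ii) requires it. Your table should record the case this way.

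\textbf{The principle in your last paragraph fails as stated.} A constraint coming from a two-path $i\to k\to j$ with $i,j$ unjoined is not wiggle invariant. For $a>0=E$, vertex $1$ is proper in $\corder{1,3,2,4}$ but not in the wiggle-equivalent $\corder{1,3,4,2}$, because of the path $4\to1\to2$. What actually makes the exclusions work is one of two things:
\begin{enumerate}[label=(\arabic*)]
\item each bad class not containing $\omega_0$ is a singleton, so one improper vertex suffices; or
\item it is the single two-element class $\{\corder{1,3,2,4},\corder{1,3,4,2}\}$, which occurs only when $a,d>0=E$. That class is excluded at vertex $2$ by the path $1\to2\to3$, whose three vertices are pairwise joined, so the constraint is wiggle invariant.
\end{enumerate}
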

\begin{proof}
    The complete directed triangle on $\{1,3,4\}$ restricts every proper class to
    $\corder{1,2,3,4}$, $\corder{1,3,2,4}$, and $\corder{1,3,4,2}$, and in each sign case the remaining entries either exclude the last two orders or join them to $\omega_0$ by wiggles.
    The proper wiggle classes depend only on the signs of the entries, and for every sign pattern allowed by (i) or (ii) there is at most one proper wiggle class, which contains $\omega_0$ \claim{A0.ord1, A0.ord2}.
    Theorem~\ref{thm:FN} supplies a totally proper class, so it is that class.
\end{proof}

\begin{lemma}\label{lem:productbound}
    Let $F,r,s$ be positive integers satisfying
    $1\le r,s\le F-1$ and $rs=2F$.
    Then $r,s\ge3$ and
    \begin{equation}\label{eq:productbound}
        3(r^2+s^2)\le2F^2-5F+36.
    \end{equation}
\end{lemma}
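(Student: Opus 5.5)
The first claim, $r,s\ge3$, comes straight from the two constraints. If $r=1$, then $s=2F>F-1$, which contradicts $s\le F-1$. If $r=2$, then $s=F>F-1$, which is again impossible. The same argument with the roles swapped gives $s\ge3$. Note also that $F\ge7$ is forced, since $rs=2F$ together with $r,s\ge3$ and $r,s\le F-1$ requires $9\le 2F$ and $3(F-1)\ge 2F$.

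For the inequality \eqref{eq:productbound}, fix the product $rs=2F$ and write $r^2+s^2=(r+s)^2-4F$. This is increasing in $r+s$. With the product fixed and $r$ allowed to range over $[3,2F/3]$ (the range forced by $r,s\ge3$), the sum $r+2F/r$ is convex in $r$. Its maximum on that interval is therefore attained at an endpoint, namely $r=3$, $s=2F/3$, or symmetrically $r=2F/3$, $s=3$. At that extreme,
\[
    3(r^2+s^2)\le 3\Bigl(9+\tfrac{4F^2}{9}\Bigr)=27+\tfrac{4F^2}{3}.
\]
It remains to compare $27+\tfrac43F^2$ with $2F^2-5F+36$. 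The difference is
\[
    \tfrac23F^2-5F+9=\tfrac13\bigl(2F^2-15F+27\bigr)=\tfrac13(2F-9)(F-3),
\]
which is nonnegative for $F\ge 9/2$. Since $F\ge7$, the inequality holds.

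The main care point is that the convexity argument uses only the lower bounds $r,s\ge3$, not $r,s\le F-1$. This is fine: the admissible set only shrinks when the upper bounds are imposed, so the maximum can only decrease. As a sanity check, small cases such as $F=9$, $(r,s)=(3,6)$ give $135\le 153$. This matches the style of the other machine-checked claims in the paper, so I would expect the authors to certify it the same way, by a nonnegative combination of the hypotheses $r-3\ge0$, $s-3\ge0$ and the equation $rs=2F$. Explicitly,
\[
    (r-3)(s-3)\ge0 \quad\Longrightarrow\quad r+s\le \tfrac{2F+9}{3},
\]
and then
\[
    3(r^2+s^2)=3(r+s)^2-12F\le \tfrac{(2F+9)^2}{3}-12F,
\]
which is exactly the endpoint value above.
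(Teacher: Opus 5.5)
Your proof is correct. The part $r,s\ge3$ is the same as the paper's, but you prove the inequality by a different route.

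\textbf{The paper's route.} It bounds each square linearly using the upper bounds. The products $(r-3)(F-1-r)\ge0$ and $(s-3)(F-1-s)\ge0$ give $r^2+s^2\le(F+2)(r+s)-6(F-1)$. It then adds $(F+2)$ times $(r-3)(s-3)\ge0$, i.e.\ $3(r+s)\le 2F+9$. This lands exactly on $3(r^2+s^2)\le 2F^2-5F+36$, as one linear certificate with no side condition on $F$.

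\textbf{Your route.} You use the identity $r^2+s^2=(r+s)^2-4F$ and only the lower bounds $r,s\ge3$. This gives $3(r^2+s^2)\le\frac{4}{3}F^2+27$, which is sharper than the stated bound. It is attained at $\{r,s\}=\{3,2F/3\}$ whenever $3\mid F$ and $F\ge6$, e.g.\ $F=6$, $(r,s)=(3,4)$. The cost is a final one-variable comparison $\frac13(2F-9)(F-3)\ge0$, which needs $F\ge 9/2$.

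\textbf{What each buys.} Your version shows the stated bound is not sharp for large $F$. It also shows the hypotheses $r,s\le F-1$ are used only to force $r,s\ge3$. The paper's version yields the exact right-hand side used downstream directly, with no case condition.

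\textbf{One correction.} Your side remark that $F\ge7$ is forced is false. $F=6$ with $(r,s)=(3,4)$ satisfies every hypothesis, and your two stated inequalities $9\le 2F$ and $3(F-1)\ge 2F$ only give $F\ge5$. This does no harm, because your final comparison needs only $F\ge 9/2$, which follows from $2F=rs\ge9$. Just replace $F\ge7$ by that.
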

\begin{proof}
    If $r\le2$, then $s=2F/r\ge F$, contradicting $s\le F-1$, and symmetrically for $s$ \claim{A0.pb1}.
    The inequality follows from $(r-3)(F-1-r)\ge0$, $(s-3)(F-1-s)\ge0$ and $(r-3)(s-3)\ge0$ \claim{A0.pb2}.
\end{proof}

\subsection{Proof of Lemma~\ref{lem:zero}}\label{app:zero}
Because vertices $1,2$ are not joined, mutation at either one leaves all entries incident to the other unchanged.
The low neighbor $\mu_1P$ therefore bounds every entry except $b_{34}$ by $T$.
Exchange $3,4$ and reverse arrows as needed to write
\[
    \qtuple{P}=(0,b,-c,d,E,F),\qquad b,c\ge1,
    \qquad b,c,|d|,|E|\le T.
\]
(The signs of $b,c$ are forced, because mutation at $1$ lowers the $b_{34}$ entry only if its arrows point $4\to1\to3$, in which case it becomes $F-bc$ \claim{A1.mu1P}.)
The case $d<0<E$ is impossible, because mutation at $2$ would increase $b_{34}$ to $F+(-d)E>F$ \claim{A1.mu2-dneg-Epos, A1.dneg-Epos}.
Lemma~\ref{lem:appendixorder} gives \claim{A1.markov}
\begin{equation}\label{eq:zeroMarkov}
    m=b^2+c^2+d^2+E^2+F^2-bcF+dEF.
\end{equation}

We first show that from $P$, and from every quiver of the same form, the mutations at $3$ and $4$ leave height $F$.
This is applied to the second high quiver below.
Mutation at $3$ would create the entries $bF-c$ and $E+F[d]_+$ \claim{A1.mu3-dpos, A1.mu3-dneg}.
If its height were at most $F$, then $b=1$ and $d\le1$, with $E\le0$ when $d=1$ \claim{A1.mu3-b1, A1.mu3-d1, A1.mu3-E}.
With $b=1$, each of the cases $d=0$, $d<0$ (where $E\le0$, since $d<0<E$ is excluded) and $d=1$ gives $m>F$ \claim{A1.id-dnonpos, A1.dzero, A1.dneg-Enonpos, A1.id-d1, A1.d1}, contradicting $m\le T$.
Mutation at $4$ is ruled out by the same argument after reversing arrows and exchanging $3,4$, which sends $(b,c,d,E)$ to $(c,b,-E,-d)$ and preserves the form of the calculation \claim{A1.sym34}.

If $\mu_2P$ is high, only its $b_{34}$ entry can have magnitude $F$.
That entry stays $F$ unless $d>0>E$, in which case it becomes $F+dE$ \claim{A1.mu2-dpos-Eneg, A1.mu2-dpos-Epos, A1.mu2-dneg-Eneg}.
So it either stays $F$ or becomes $-F$, and the latter requires $d>0$, $E=-e<0$ and $de=2F$, where $d,e\le F-1$.
Then $d,e\ge3$ \claim{A1.minusF-d3, A1.minusF-e3}, and the rectangle bound \eqref{eq:rectangle} and Lemma~\ref{lem:productbound} give $3m\le42-8F-F^2<0$, a contradiction \claim{A1.minusF}.
Thus a second high quiver has the form $(0,b,-c,-d,-E,F)$.
It satisfies the same sign restriction and the same Markov bound, so the preceding argument again forbids $3,4$.
The next reduced mutation must be at $1$.
It gives the entry $F-bc$ at $b_{34}$, exactly as in the low quiver $\mu_1P$, and all its other entries are low \claim{A1.second-dneg-Eneg, A1.second-dpos-Epos}.
This proves the first assertion.

For the second assertion, the two mutations commute and are involutions, so there are at most four labeled quivers to consider.
Only a low endpoint requiring both mutations needs attention.
If a one-mutation intermediate quiver $P$ has height greater than $T$, its two neighbors are the low start and low endpoint.
Such a temporary corner need not have height at most $F$; the bound $m\le T$ still holds throughout the mutation class.
The two low neighbors force every incident entry to be low, and both lower the remaining entry.
Thus normalize
\[
    \qtuple{P}=(0,b,-c,d,-e,f),\qquad
    1\le b,c,d,e\le T,\qquad f>T.
\]
The two low neighbors have $b_{34}$ entries $f-bc$ and $f-de$, and the other corner of the commuting square has entry $f-bc-de$ \claim{A1.sq-paths, A1.sq-paths2}.
Its upper bound follows from $f-bc\le T$; we must prove
\begin{equation}\label{eq:zerosquare}
    f-bc-de\ge-T.
\end{equation}
Lemma~\ref{lem:appendixorder} also gives \claim{A1.sq-markov, A1.sq-pf}
\[
    m=b^2+c^2+d^2+e^2+f^2-f(bc+de),\qquad p=be-cd.
\]
Assume for a contradiction that $bc+de>f+T$.

If all four of $b,c,d,e$ are at least two, then $4m\le9$, contradicting $m\ge3$; the two claims \claim{A1.sq-all2-rs, A1.sq-all2-sr} treat $de\le bc$ and $bc\le de$.

Otherwise some entry is $1$.
The transformations
\[
    (b,c,d,e)\longmapsto(c,b,e,d),\qquad
    (b,c,d,e)\longmapsto(d,e,b,c)
\]
are realized by reversing all arrows and exchanging $3,4$, and by exchanging $1,2$ \claim{A1.sq-sym1-relabel, A1.sq-sym2-relabel}.
They preserve all the hypotheses and $m$, and change only the sign of $p$ \claim{A1.sq-sym1, A1.sq-sym2}.
They can put a unit entry at $b=1$.
Then neither $d$ nor $e$ can be $1$ \claim{A1.u-d1, A1.u-e1}.
If also $c=1$, then $f=F$ and $de=2F-1$ \claim{A1.u-c1-f, A1.u-c1-dehi, A1.u-c1-delo}, so $d,e$ are odd and at least three \claim{A1.u-c1-d2, A1.u-c1-e2}, and then $m<3$ \claim{A1.u-c1-main}.
Assume from now on that $c\ge2$, so that $c,d,e\ge2$.
We use one common estimate \claim{A1.u-common}:
\begin{equation}\label{eq:zeroCommon}
    m\le(d-e)^2+4.
\end{equation}
It remains to compare $c$ with $d,e$.
\begin{enumerate}[label=\arabic*.]
    \item If $c<\max(d,e)$, then $m\le2$ \claim{A1.u-case1-d, A1.u-case1-e}.
    \item If $e\le d\le c$, then $2|p|=2(cd-e)\ge(d-e)^2+4\ge m$ by \eqref{eq:zeroCommon}, contradicting $2|p|<m$ \claim{A1.u-case2}.
    \item If $d<e=c$, then $m\le2|p|$, contradicting $2|p|<m$ \claim{A1.u-case3-d2, A1.u-case3-d3}.
    \item If $d<e<c$, then $f<de<2(cd-e)<m\le T<f$ \claim{A1.u-case4}.
\end{enumerate}
This proves \eqref{eq:zerosquare}.
Thus whenever one intermediate corner has height greater than $T$, the other is low.
The alternative two-mutation path has exactly the same labeled endpoint and stays low, proving the second assertion. \qed

\subsection{Proof of Lemma~\ref{lem:unitroute}}\label{app:unit}
Put $N_0=C$, $N_1=\mu_1C$, $N_2=\mu_{\mword{1,2}}C$, $N_3=\mu_{\mword{1,2,1}}C$, $N_4=\mu_{\mword{1,2,1,2}}C$.
By the pentagon relation \eqref{eq:pentagon}, there are at most ten labeled quivers reachable using only $1,2$, namely $N_r,N_r^{\pi_{12}}$.
Mutation at $1$ or $2$ gives the following transitions:
\begin{center}\begin{tabular}{c|cc}
          & $1$ & $2$\\\hline
    $N_0$ & $N_1$ & $N_4^{\pi_{12}}$\\
    $N_1$ & $N_0$ & $N_2$\\
    $N_2$ & $N_3$ & $N_1$\\
    $N_3$ & $N_2$ & $N_4$\\
    $N_4$ & $N_0^{\pi_{12}}$ & $N_3$
\end{tabular}\end{center}
The transitions from $N_r^{\pi_{12}}$ follow by swapping the mutation vertices.
It is enough to reach the endpoint up to $\pi_{12}$.
The endpoints $N_0,N_1,N_4$ require at most one mutation.
For $N_2$, use $\mword{1,2}$ if $\htq(N_1)\le K$, and $\mword{2,1,2}$ otherwise.
For $N_3$, use $\mword{2,1}$ if $\htq(N_4)\le K$, and $\mword{1,2,1}$ otherwise.
The next calculation proves that each detour stays within height $K$.

It remains to justify the detour when a direct intermediate quiver $P$ exceeds height $K$, but both neighbors $\mu_1P,\mu_2P$ have height at most $K$.
Every entry touching vertex $1$ or $2$ is bounded by $K$, because one of these mutations simply reverses that entry.
Thus only the $b_{34}$ entry can exceed $K$.
Reverse arrows and exchange $3,4$ if necessary to make $b_{12}(P)=1$ and $b_{34}(P)=f>K$.
For mutation at $1$ to reduce $f$, its incident arrows must be $4\to1\to3$, and for mutation at $2$ to reduce $f$, they must be $4\to2\to3$.
Therefore
\[
    \qtuple{P}=(1,b,-c,d,-e,f),\qquad b,c,d,e\ge1.
\]
The tuples of the two low neighbors are \claim{A2.neighbours1, A2.neighbours2}
\[
    (-1,-b,c,d,-c-e,f-bc),\qquad (-1,b+d,-c,-d,e,f-de).
\]
They give $b+d\le K$, $c+e\le K$, and $|f-bc|,|f-de|\le K$.
Lemma~\ref{lem:appendixorder} identifies the totally proper class, so \claim{A2.markov, A2.pf}
\[
    D=bc+de+cd,\quad m=1+b^2+c^2+d^2+e^2+bd+ce+f^2-fD,\quad p=f+be-cd.
\]
Starting at $\mu_1P$, the alternative path $\mword{2,1,2}$ ends at $(\mu_2P)^{\pi_{12}}$, and its two intermediate tuples are \claim{A2.detour}
\[
    (1,-b,-e,-d,c+e,f-D),\quad(-1,b,e,-b-d,c,f-D).
\]
All their entries are bounded by $K$ except possibly $f-D$.
Its upper bound follows from $f-D\le f-bc\le K$.
We must prove $f-D\ge-K$.
Suppose instead $f+K<D$.
If $c,d\ge2$, then $m\le-1$, contradicting $m\ge3$; the two claims \claim{A2.cd2-SU, A2.cd2-US} treat $b+d\ge c+e$ and $c+e\ge b+d$.
If $d=1$, then $b,c\ge2$ \claim{A2.d1-b1, A2.d1-c1}, and $m-2p\le-5$, contradicting $2|p|<m$; with $S=b+1$ and $U=c+e$, the three claims \claim{A2.d1-SU, A2.d1-US-i, A2.d1-US-ii} treat $S\ge U$, and $U\ge S$ with $c\le S-1$ or $c\ge S-1$.
The case $c=1$ follows by the symmetry $(b,c,d,e)\mapsto(e,d,c,b)$, which is realized by reversing all arrows and relabeling by $(1\,2)(3\,4)$, and which preserves the hypotheses \claim{A2.sym-relabel, A2.sym}.
Thus $f-D\ge-K$, proving the height bound. \qed

\subsection{Proof of Lemma~\ref{lem:strong}}\label{app:strong}
Reverse all arrows if necessary so that $a=b_{12}(P)\ge2$.
All entries incident to $1$ have magnitude less than $F$, because mutation at $1$ only reverses them.
The entries incident to $2$ also have magnitude less than $F$.
Indeed, for $k=3,4$, mutation at $1$ replaces $b_{2k}(P)$ by $b_{2k}(P)-a[-b_{1k}(P)]_+$.
Thus $b_{2k}(P)=-F$ could not become low.
If $b_{2k}(P)=F$ becomes low, then $b_{1k}(P)<0$, and mutation at $2$ would create the entry $b_{1k}(P)+aF>F$, contrary to its height bound \claim{A3.b23-mu1-ypos, A3.b23-mu1-yneg, A3.b23m-mu1, A3.b23m-mu1-ypos, A3.b23-mu2, A3.b23-big}; these claims treat $k=3$, and $k=4$ follows by exchanging $3,4$.
Only the entry $b_{34}(P)$ can therefore have magnitude $F$.
Exchange $3,4$ if necessary to make that entry positive.
For mutation at $1$ to lower it, its two arrows must point $4\to1\to3$.
Consequently
\[
    \qtuple{P}=(a,b,-c,d,E,F),\qquad a\ge2,\quad
    b,c\ge1,\quad a,b,c,|d|,|E|\le F-1.
\]
The two neighbor bounds also give
\[
    |E-ac|\le F-1,\qquad b+ad\le F\quad\text{if }d\ge0
\]
\claim{A3.nb1, A3.nb2}.
Its Pfaffian is $p=aF-bE-cd$ \claim{A3.pf}.

We first show $d\ge0$ and $E\ge c$.
If $d<0<E$, mutation at $2$ creates the entry $F+(-d)E>F$ \claim{A3.mu2-dneg-Epos, A3.dneg-Epos}.
If $d<0$ and $E\le0$, then $p\ge aF\ge2F$, contradicting \eqref{eq:prefixscalar} \claim{A3.dneg-Enonpos}.
The same contradiction excludes $d=0,E<0$ \claim{A3.d0-Eneg}.
In all remaining cases, Lemma~\ref{lem:appendixorder} gives $m=M_{\omega_0}(P)$.
If $d\ge0$ and $0\le E<c$, put $x=b+(a-1)d$ and $y=c-E$, so that $1\le x,y\le F-1$.
The identity
\[
    m=a^2+x^2+y^2-Fxy-(a-1)F^2+Fp
    -(a-2)\{d(b+ad)+cE+FdE\}
\]
\claim{A3.identity1}, the rectangle bound \eqref{eq:rectangle} and $2p<m\le F-1$ give $2m<6$, contradicting $m\ge3$ \claim{A3.case-Esmall-d1, A3.case-Esmall-d0}.
If $d>0$ and $E=-e<0$, then $ac+e\le F-1$ and $b+(a-1)d\le F-1$, and a similar estimate \claim{A3.identity2, A3.case-Eneg} again gives $2m<6$.
Thus $d\ge0$ and $E\ge c$ as claimed.

Write $\beta=b+ad$ and $\gamma=aE-c$.
Mutation at $2$ gives \claim{A3.mu2-rest, A3.gamma-ge-E}
\begin{equation}\label{eq:strongnext}
    \qtuple{\mu_2P}=(-a,\beta,\gamma,-d,-E,F),\qquad \beta\ge1,\quad \gamma\ge E\ge1.
\end{equation}
Mutation at $3$ would create the entry $\gamma+\beta F>F$ \claim{A3.mu2-then3, A3.mu2-then3-big}.
Mutation at $4$ would create an entry of magnitude $d+EF>F$, unless $d=0,E=c=1$ \claim{A3.mu2-then4, A3.mu2-then4-d, A3.mu2-then4-E}.
In this exception the unique proper class identified above gives $m=a^2+b^2+F^2-a-bF+2>F$, contradicting $m\le T$ \claim{A3.exception-m, A3.exception}.
A reduced continuation must therefore mutate at $1$.

After this mutation, $b_{12}$ is positive, $b_{34}=F$, and the two columns joining $1,2$ to $3,4$ are $(-\beta,a\beta-d)$ and $(-\gamma,a\gamma-E)$ \claim{A3.mu1-after}.
Each has the form $(-X,Y)$ with $1\le X\le Y$ \claim{A3.form-beta, A3.form-gamma}.
From any quiver of this form, mutation at $3$ creates the $b_{24}$ entry $Y_4+Y_3F>F$, and mutation at $4$ creates the $b_{13}$ entry $-X_3-X_4F<-F$ \claim{A3.W-mu3, A3.W-mu3-big, A3.W-mu4, A3.W-mu4-big}.
Only $1,2$ can be used.
To see that the same argument keeps applying, mutate at $2$ and exchange labels $1,2$.
A column $(-X,Y)$ becomes $(-Y,aY-X)$ with $aY-X\ge Y\ge1$ \claim{A3.W-mu2, A3.W-form3, A3.W-form4}.
The entry $b_{12}$ is again positive, and $b_{34}$ remains $F$.
Thus every reduced continuation is forced to alternate at $1,2$, and a forbidden choice at $3$ or $4$ exceeds height $F$.
The unchanged entry $b_{34}=F$ prevents any return to a low quiver. \qed

\section{Proof of Proposition~\ref{prop:unit}}\label{app:signs}
We keep the setup of Section~\ref{sec:local} and of Proposition~\ref{prop:unit}.
In particular, the route minimizes \eqref{eq:routecost}, its maximum height is $F$, and $C$ is the low quiver preceding a high segment of this route; the first two mutation vertices are $1,2$, and $b_{12}(C)=1$.
Part (a) is Lemma~\ref{lem:unitroute} with $K=T$.
From now on the segment uses a third vertex.
No quiver before the endpoint of a minimizing route is acyclic.
Otherwise, truncating there would decrease its maximum height, its number of visits at that height, or its length, in that order.
In particular, the low predecessor $C$ is not acyclic.
We use this observation to discard acyclic sign cases immediately.

\subsection{The prefix before the third vertex is \texorpdfstring{$\mword{1,2}$}{[1,2]}}\label{app:prefix}
Before the first mutation at a third vertex, the route alternates at $1,2$ and reaches a quiver of height at most $F$ from the low quiver $C$.
By Lemma~\ref{lem:unitroute} with $K=F$, that quiver is reached, up to the swap of $1,2$, by at most three mutations within height $F$.
If the alternating prefix had length four or more, replacing it by this shorter route and transporting the tail by \eqref{eq:tailtransport} would remove at least one high quiver (the prefix consists of high quivers, and the shorter route contains fewer of them), contradicting the minimizing-route hypothesis in Proposition~\ref{prop:unit}.
Hence the prefix is $\mword{1,2}$ or $\mword{1,2,1}$.
We now exclude $\mword{1,2,1}$.
The pentagon relation gives $\mu_{\mword{2,1}}C=(\mu_{\mword{1,2,1}}C)^{\pi_{12}}$.
If $\mu_2C$ had height at most $F$, replacing $\mword{1,2,1}$ by $\mword{2,1}$ would remove a high quiver.
In each replacement, the old prefix has three high quivers, while the new prefix has at most two, and transporting the tail preserves all its heights.
Thus minimality forces $\htq(\mu_2C)>F$.
The next calculation shows that this is impossible.

Write the tuple of the low starting quiver as $\qtuple{C}=(1,x,c,d,e,f)$.
Because $\mu_{\mword{2,1}}C$ is a relabeling of the third high quiver, it has height at most $F$.
Its last mutation reverses every entry touching $1$, and the entries touching $2$ of $\mu_2C$ are reversals of entries of $C$.
Hence only the $b_{34}$ entry of $\mu_2C$ can exceed $F$.
Exchange $3,4$ to make it less than $-F$.
To decrease $f$ at vertex $2$, we must have $d>0$ and $e=-E<0$, and then $b_{34}(\mu_2C)=f-dE<-F$, while the $b_{13},b_{14}$ entries of $\mu_2C$ are $x+d,c$ \claim{B1.mu2C}.
For mutation at $1$ to raise its $b_{34}$ entry back into $[-F,F]$, we therefore need $x+d<0<c$.
Write $x=-b$, thus $b>d\ge1$ and $c,E\ge1$.
The first high quiver $\mu_1C$ can now reach height $F$ only in its $b_{34}$ entry, which is $f+bc$ \claim{B1.mu1C}.
Since $f\ge-(F-1)$ and $bc>0$, this entry cannot be $-F$.
Consequently
\[
    \qtuple{C}=(1,-b,c,d,-E,F-bc),\quad b,c,d,E\ge1,\quad d<b.
\]
The height bounds on $C$, on $\mu_2C$, and on the quivers after $\mword{1},\mword{1,2},\mword{1,2,1}$ \claim{B1.prefix} give
\[
\begin{gathered}
    bc\le2F-1,\qquad 2F<bc+dE,\qquad c+E\le F,\\
    \text{and either}\quad c+E=F\quad\text{or}\quad d(c+E)=2F.
\end{gathered}
\]
The directed triangle $1\to2\to3\to1$ fixes the cyclic order $\corder{1,2,3}$, and the path $1\to4\to2$ in the complete triple $\{1,2,4\}$ places $4$ between $1,2$.
Thus $C$ has the unique proper wiggle class represented by $\corder{1,4,2,3}$ \claim{B1.order}, and \claim{B1.markov, B1.pf}
\[
    m=1+b^2+c^2+d^2+E^2+cE-bd+F^2-F(bc+dE),\quad p=F-b(c+E)+cd.
\]
If $c+E=F$, then $b\ge3$ \claim{B1.ce-b2}, and the identity $b(-2p-F)=F(2b^2-3b-4d)+2d(2F-bc)$ \claim{B1.ce-identity} shows $-2p>F$, contradicting $2|p|<m\le F-1$ \claim{B1.ce}.
Otherwise $d(c+E)=2F$, so that $d,c+E\ge3$ and $F\ge5$ \claim{B1.dl-d3, B1.dl-L3, B1.dl-F5}, and Lemma~\ref{lem:productbound} gives $3m\le-F^2-5F+36-3cE<9$, contradicting $m\ge3$ \claim{B1.dl}.
Hence the first mutation at a third vertex follows the sequence $\mword{1,2}$.

\subsection{The sign cases}
Write the tuple of the low starting quiver as $\qtuple{C}=(1,x,c,d,e,f)$, and relabel the third vertex as $3$.
The quivers after $\mword{1}$ and $\mword{1,2}$ have height $F$, and the quiver after $\mword{1,2,3}$ has height at most $F$.
We write $P$ and $Q$ for the quivers after $\mword{1}$ and $\mword{1,2}$, so that $Q=\mu_2P$ is a local name in this appendix, and we refer to the height bounds on $P$, $Q$ and $\mu_3Q$ as the first, second and third height bounds.
There are three cases, according to the signs of $x$ and $d$.
The outcome is that only the quivers $A_c$ and $I_{t,u}$ of \eqref{eq:AI} survive.

\subsubsection{Case $x=-b<0<d$}\label{app:case1}
Here $1\le b,d\le F-1$, and all entries of $C$ have absolute value at most $F-1$.

First suppose $c>0$.
Mutation at $1$ gives $(-1,b,-c,d-b,e,f+bc)$ \claim{B21.P-cpos}, so the first high quiver can reach height $F$ only through its $b_{34}$ entry, and $f+bc=F$.
We consider the three possible comparisons between $b$ and $d$.

If $d<b$, put $r=b-d\ge1$.
For $e\ge0$, the $b_{34}$ entry of $Q$ is $F+re$, and for $e\le0$, the $b_{24}$ entry of $\mu_3Q$ is $rF-e$ \claim{B21.a-Q-epos, B21.a-Q-eneg}.
The second and third height bounds therefore give $e=0$ \claim{B21.a-epos, B21.a-eneg}, then $r=1$ \claim{B21.a-r} and $d=1$ \claim{B21.a-d}.
Thus $b=2$ and
\[
    \qtuple{C}=(1,-2,c,1,0,F-2c)=\qtuple{A_c}.
\]

If $b<d$, put $r=d-b\ge1$.
When $e\ge0$, the Pfaffian is $p=F+rc+be\ge F$, contrary to \eqref{eq:prefixscalar} \claim{B21.b-pf, B21.b-epos}.
Hence $e=-E<0$.
Now $\qtuple{Q}=(1,b,-c-E,-r,E,F-rE)$ \claim{B21.b-Q}, and $\mu_3Q$ has $b_{12}=1+br$ and $b_{24}=E-r[rE-F]_+$ \claim{B21.b-mu3Q-a, B21.b-mu3Q-b}.
Because $Q$ has height $F$, either $c+E=F$ or $rE=2F$.
The latter forces $r\ge3$, and then $b_{24}(\mu_3Q)<-F$ \claim{B21.b-rE2, B21.b-rE3}.
Consequently $c+E=F$.
The directed triangle $1\to2\to3\to1$ and the path $1\to4\to2$ again leave the single proper wiggle class of $\corder{1,4,2,3}$ \claim{B1.order}, and evaluation in this ordering gives \claim{B21.markov-1423}
\begin{equation}\label{eq:positive-order-markov}
    m=1+b^2+c^2+d^2+E^2+f^2+cE+bcf-bd-dEf-bcdE.
\end{equation}
Using $d=b+r$, $c+E=F$, and $f=F-bc$, this becomes \claim{B21.b-reduced, B21.b-reduced-p}
\[
    m=1+b^2+br+r^2-(b-2)F^2-rFE-cE,\qquad p=(r+1)c-(b-1)E.
\]
If $b\ge3$, then $m<3$ \claim{B21.b-b3}, and if $b=2$, then $m\le1$ \claim{B21.b-b2-identity, B21.b-b2}.
We must therefore have $b=1$.
Now $p=(r+1)c>0$, and $2p<m\le F-1=c+E-1$ gives $E\ge(2r+1)c+2$ \claim{B21.b-b1-E}.
But $f=E$, so the full triangle on $\{2,3,4\}$ is an oriented cycle with multiplicities $(r+1,E,E)$ \claim{B21.b-b1-triangle}, all at least two, and its Markov constant $(r+1)^2-(r-1)E^2$ is at most $4$ \claim{B21.b-b1-markov3}.
Proposition~\ref{prop:rank3} excludes this triangle.
Thus $b<d$ is impossible when $c>0$.

If $b=d$, the Pfaffian is $p=F+be$, so again $e=-E<0$ \claim{B21.c-pf, B21.c-epos}.
Here $\qtuple{Q}=(1,b,-c-E,0,E,F)$ and $b_{14}(\mu_3Q)=bF-c-E$ \claim{B21.c-Q}, which forces $b\le2$ \claim{B21.c-b3}.
The same ordering argument identifies the transported Markov value with \eqref{eq:positive-order-markov}.
At $b=1$, we get $m>F$ \claim{B21.c-b1-m, B21.c-b1}.
At $b=2$, the height bound on $Q$ gives $c+E\le F$, while the $b_{14}$ entry of $\mu_3Q$ gives $c+E\ge F$.
Thus $c+E=F$, and then $m=5-cE$ forces $c+E\le3<F$ \claim{B21.c-b2-m, B21.c-b2}.

Now suppose $c\le0$.
Mutation at $1$ gives $(-1,b,-c,d-b,e+c,f)$ \claim{B21.n-P}, whose only entry that can have magnitude $F$ is $e+c$.
Hence $c=-t<0$, $e=-v<0$, and $t+v=F$, with $t,v\ge1$.
If $f<0$, vertex $4$ is a source and $C$ is a vortex \claim{B21.n-vortex}, which Lemma~\ref{lem:vortex} excludes.
We may assume $0\le f\le F-1$.

If $d<b$, put $r=b-d\ge1$.
Then $\qtuple{Q}=(1,d,-v,r,F,f)$ and $b_{24}(\mu_3Q)=F+rf$ \claim{B21.n-dlt}.
The third height bound gives $f=0$ \claim{B21.n-dlt-f}, but then $|p|=bv+td\ge F$, contradicting \eqref{eq:prefixscalar} \claim{B21.n-dlt-p}.

If $d=b$, then $p=f-bF$ \claim{B21.n-deq-p}, so $b=1$ \claim{B21.n-deq-b2}.
The unique proper wiggle class contains $\corder{1,2,3,4}$ \claim{B21.n-deq-order0, B21.n-deq-order1}, and evaluation in this class gives $m=2+t^2+v^2+f^2+tv-vf>F$, again impossible \claim{B21.n-deq-m, B21.n-deq-b1}.

Finally, if $d>b$, put $r=d-b\ge1$.
Then $\qtuple{Q}=(1,b,-v,-r,F,f-rF)$ and $b_{34}(\mu_3Q)=rF-f$ \claim{B21.n-dgt}.
Since $f\le F-1$, the third height bound forces $r=1$ \claim{B21.n-dgt-r}.
Put $u=F-f\ge1$.
Then $|p|=(b-1)F+t+u$ \claim{B21.n-dgt-p}, so $b=1$ \claim{B21.n-dgt-b2}, $d=2$, and $2(t+u)<m\le F-1$ \claim{B21.n-dgt-b1}.
The starting quiver therefore has tuple
\[
    \qtuple{C}=(1,-1,-t,2,t-F,F-u)=\qtuple{I_{t,u}},\qquad 1\le t,u\le F-1.
\]
These cases leave only $A_c$ and $I_{t,u}$.

\subsubsection{Case $x=-b<0$, $d=-D\le0$}\label{app:case2}
Here $1\le b\le T$ and $0\le D\le T$.
Put
\[
    h=e-[-c]_+,\qquad g=f+b[c]_+,\qquad z=g+(b+D)[h]_+.
\]
In each of the eight sign cases of $c$, $h$ and $z$, which are the claims \claim{B22.paths-+++} through \claim{B22.paths----},
\[
    \qtuple{P}=(-1,b,-c,-D-b,h,g),\qquad\qtuple{Q}=(1,-D,-c-[-h]_+,D+b,-h,z),
\]
and two entries after the third mutation are
\[
    b_{12}(\mu_3Q)=1-D(D+b),\qquad b_{24}(\mu_3Q)=-h+(D+b)[z]_+.
\]
All entries of $C$ have absolute value at most $T$.
Consequently $P$ can reach height $F$ only through
\[
    h=-F,\qquad g=F,\qquad\text{or}\qquad b+D=F.
\]
We exclude these possibilities in that order, since they need not be disjoint.

If $h=-F$, then $c,e<0$ and $g=z=f$.
The third height bound forces $f\le0$ \claim{B22.hF-f}, and then $C$ is acyclic \claim{B22.hF-acyclic}, contrary to its nonacyclicity.

If $g=F$, then $c>0$, and $z\le F$ forces $e\le0$ \claim{B22.gF-e}.
The third height bound forces $b=1$ and $D=0$ \claim{B22.gF-bD}, and then $e=0$ \claim{B22.gF-e0}.
Hence $C=(1,-1,c,0,0,F-c)$ with $0<c<F$, which is acyclic \claim{B22.gF-acyclic}.

It remains to consider $b+D=F$.
Since $b\le T$, we have $D\ge1$.
The third $b_{12}$ bound forces $D=1$ and $b=T$ \claim{B22.bD-D}, and the third $b_{24}$ bound forces $z\le1$ \claim{B22.bD-z}.
The Pfaffian is now $p=f+Te-c$ \claim{B22.bD-pf}.
If $c>0$, then $z\le1$ forces $e\le0$ \claim{B22.bD-cpos-e}, and then $p\le-T$, contrary to~\eqref{eq:prefixscalar} \claim{B22.bD-cpos-p}.

We may therefore put $c=-t$, where $t\ge0$, so that $z=f+F[e-t]_+\ge f$.
If $f>0$, the inequality $z\le1$ forces $f=1$ and $e\le t$ \claim{B22.bD-cneg-f2, B22.bD-cneg-et}.
The third $b_{24}$ entry is then $F+t-e$, so $e\ge t$ as well \claim{B22.bD-cneg-b24, B22.bD-cneg-ete}.
Thus $e=t$.
If $t>0$, the complete quiver is a vortex, with source $3$ and directed triangle $1\to2\to4\to1$ \claim{B22.bD-vortex}, and if $t=0$, $C$ is acyclic \claim{B22.bD-t0}.
Both are impossible.

Finally suppose $f\le0$.
Now $p=f+Te+t$ \claim{B22.bD-fneg-pf}, so an integer $e\ge2$ would give $p\ge T$ \claim{B22.bD-fneg-e2}, and $e\le1$.
If $e\le0$, or if $e=1$ and $t=f=0$, then $C$ is acyclic \claim{B22.bD-fneg-e0, B22.bD-fneg-e1-00}.
Otherwise $e=1$, and $C$ has a single proper wiggle class, containing $\corder{1,2,4,3}$ \claim{B22.bD-order-tf, B22.bD-order-t0, B22.bD-order-f0}.
Evaluation gives $m=T^2+T+3+t(t-1)+f^2+f(T+1-Tt)>T$ \claim{B22.bD-m, B22.bD-mbig}.
This excludes the whole case $x<0$, $d\le0$.

\subsubsection{Case $x\ge0$}\label{app:case3}
The first mutation can reach height $F$ only if $c=-t<0$, where $1\le t\le T$.
Write $w=f-xt$ and
\[
    z=w+[-d]_+[e-t]_+-[d]_+[t-e]_+.
\]
In each of the eight sign cases of $d$, $e-t$ and $z$, which are the claims \claim{B23.paths-+++} through \claim{B23.paths----},
\[
    \qtuple{P}=(-1,-x,t,d,e-t,w),\qquad\qtuple{Q}=(1,-x-[-d]_+,\min(t,e),-d,t-e,z),
\]
and the entries we need after the third mutation are
\begin{align*}
    b_{12}(\mu_3Q)&=1-(x+[-d]_+)[-d]_+,\\
    b_{14}(\mu_3Q)&=\min(t,e)-(x+[-d]_+)[-z]_+,\\
    b_{24}(\mu_3Q)&=t-e+[-d]_+[z]_+-[d]_+[-z]_+.
\end{align*}
The Pfaffian is $p=f-xe-td$ \claim{B23.pf}.
Since every starting entry has absolute value at most $T$, the first high quiver must satisfy $e-t=-F$ or $w=-F$.

First suppose $e-t=-F$.
Put $u=F-t\ge1$, so $e=-u$.
Then $z=w-[d]_+F$, and the third $b_{14}$ bound reads
\begin{equation}\label{eq:nonneg-first-bound}
    (x+[-d]_+)[-z]_+\le t.
\end{equation}
If $d=-D<0$, the third $b_{24}$ entry is $F+D[w]_+$, so $w\le0$ \claim{B23.eF-dneg-w}.
Set $v=-w\ge0$.
Equation~\eqref{eq:nonneg-first-bound} gives $(x+D)v\le t$, and then $x\ge1$ would give $p=xF+tD-v\ge F$ \claim{B23.eF-dneg-pf, B23.eF-dneg-x}.
Thus $x=0$ and $f=w\le0$, and $C$ is acyclic \claim{B23.eF-acyc1}.

If $d\ge0$, the second height bound gives $w-dF\ge-F$, which forces $d\le1$ since $w\le T$ \claim{B23.eF-dpos-d}.
For $d=0$, equation~\eqref{eq:nonneg-first-bound} is $x[-w]_+\le t$, and $p=w+xF$ \claim{B23.eF-d0-pf}.
An $x\ge2$ is impossible, since $p\ge2F$ if $w\ge0$, and $p\ge2F-t>F$ if $w<0$ \claim{B23.eF-d0-x2a, B23.eF-d0-x2b}.
When $x=1$, the same inequality gives $f=t+w\ge0$ \claim{B23.eF-d0-x1}.
For $d=1$, the second height bound gives $w\ge0$, and $x\ge1$ is impossible \claim{B23.eF-d1-x}.
Hence $x=0$ and $f=w\ge0$.
The case $x=d=0$ makes $C$ acyclic \claim{B23.eF-acyc2}.
Thus only $(x,d)=(0,1)$ or $(1,0)$ remains, with $f>0$, because $f=0$ is again acyclic \claim{B23.eF-acyc3, B23.eF-acyc4}.
In both cases $C$ has a single proper wiggle class, containing $\omega_0=\corder{1,2,3,4}$ \claim{B23.eF-ord01, B23.eF-ord10}, and
\[
    m=2+t^2+u^2+tu+f^2-\lambda f,\qquad\lambda=\begin{cases}
        F,&(x,d)=(0,1),\\
        t,&(x,d)=(1,0),
        \end{cases}
\]
which is at least $F$ in both cases, contradicting $m\le T$, by \claim{B23.eF-m01, B23.eF-m01-big} and \claim{B23.eF-m10, B23.eF-m10-big}.
This excludes $e-t=-F$.

We may now suppose $w=-F$ and $e-t>-F$.
Since $f=xt-F>-F$, we have $x\ge1$ \claim{B23.wF-x}.
Also $|e-t|<F$ and $|\min(t,e)|<F$.
For $d=-D<0$,
\[
    z=-F+D[e-t]_+,
\]
and the second quiver can be high only if $x+D=F$ or $z=\pm F$.
If $x+D=F$, the third $b_{12}$ bound gives $D=1$ \claim{B23.wF-dneg-D}, the third $b_{14}$ bound then gives $z\ge-1$ \claim{B23.wF-dneg-z}, and $z=-F+[e-t]_+\ge-1$ is impossible because $e-t\le F-2$ \claim{B23.wF-dneg-et}.
If $z=-F$, the third $b_{14}$ entry is less than $-F$ \claim{B23.wF-dneg-zmF}.
If $z=F$, then $e-t>0$, the third $b_{24}$ bound gives $D=1$ \claim{B23.wF-dneg-zF-D}, and $z=-F+(e-t)=F$ is impossible \claim{B23.wF-dneg-zF}.
All possibilities with $d<0$ are excluded.

For $d\ge0$, we have $z=-F-d[t-e]_+\le-F$.
The second height bound therefore gives $z=-F$ and $d[t-e]_+=0$ \claim{B23.wF-dpos-z}.
The third $b_{14}$ bound $\min(t,e)-xF\ge-F$ forces $x=1$ and $e\ge0$ \claim{B23.wF-dpos-x, B23.wF-dpos-e}.
If $d>0$, then $e\ge t$, and the third $b_{24}$ bound forces $d=1$ and $e=t$ \claim{B23.wF-dpos-d, B23.wF-dpos-e2}.
But then $p=-F-t$, contrary to~\eqref{eq:prefixscalar} \claim{B23.wF-d1-pf, B23.wF-d1-p}.

Only $d=0$ remains.
Put $u=F-t$, so $\qtuple{C}=(1,1,-t,0,e,-u)$ with $t,u\ge1$ and $e\ge0$.
If $e=0$, $C$ is acyclic \claim{B23.wF-d0-acyc}.
For $e>0$, the unique proper wiggle class contains $\corder{1,2,3,4}$ \claim{B23.wF-d0-order}, and evaluation gives $m=2+t^2+u^2+e^2+tu-te>F$ \claim{B23.wF-d0-m, B23.wF-d0-big}.
This final contradiction excludes every case with $x\ge0$, including
zero entries.
Together with Appendices~\ref{app:case1} and~\ref{app:case2}, this shows that $C$ is $A_c$ or $I_{t,u}$.

\subsection{The two remaining cases}\label{app:equal}
For $A_c$, with $\qtuple{A_c}=(1,-2,c,1,0,F-2c)$ and $1\le c\le T$, the quivers after $\mword{1}$, $\mword{1,2}$, $\mword{1,2,3}$ and $\mword{1,2,3,1}$ are high, and the quiver after $\mword{1,2,3,1,3}$ is low \claim{B3.A-segment, B3.A-low-lo0, B3.A-low-hi0, B3.A-low-lo1, B3.A-low-hi1}.
After $\mword{1,2,3}$, mutations at $2,4$ create entries $2F-c>F$ and $F^2-1>F$, respectively \claim{B3.A-forced1, B3.A-forced1b, B3.A-forced-2Fc, B3.A-forced-F2}.
Repeating the last mutation would cancel, so the next vertex must be $1$.
After $\mword{1,2,3,1}$, mutations at $2,4$ create entries $2F-c>F$ and $-1-F(F-c)<-F$, so the next vertex must be $3$ \claim{B3.A-forced2, B3.A-forced2b, B3.A-forced-FFc}.
Thus the first return is exactly $\mword{1,2,3,1,3}$.
All three quivers along $\mword{2,1,3}$ are low by the same claims \claim{B3.A-replacement}.
For the relabeling $\pi_{123}$ \claim{B3.A-relabel},
\[
    \mu_{\mword{2,1,3}}A_c=(\mu_{\mword{1,2,3,1,3}}A_c)^{\pi_{123}}.
\]
This replacement removes four high quivers, and the tail is transported by \eqref{eq:tailtransport}.

For $I_{t,u}$, with $\qtuple{I_{t,u}}=(1,-1,-t,2,t-F,F-u)$, exactly the quivers after $\mword{1}$ and $\mword{1,2}$ are high, and the quiver after $\mword{1,2,3}$ is low \claim{B3.I-segment}, so the first return is $\mword{1,2,3}$.
Both replacement mutation sequences terminate at the quiver $J$ with $\qtuple{J}=(1,-2,F-u,1,u,t-F)=\qtuple{(\mu_{\mword{1,2,3}}I_{t,u})^{\pi_{123}}}$ \claim{B3.I-endpoint, B3.I-tgu, B3.I-ugt}.
Every quiver along them is low, using $T\ge3$ \claim{B3.I-low-tgu-lo0, \dots, B3.I-low-ugt-hi5}.
Thus both replacement paths, including their common endpoint $J$, are entirely low and each removes two high quivers.

Finally, suppose $t=u$.
The Pfaffian of $I_{t,t}$ is $p=-2t$ \claim{B3.I-tt-pf}.
Thus \eqref{eq:prefixscalar} gives $4t<m\le F-1$, and in particular $k=F-t\ge2$ \claim{B3.I-tt-k}.
The full triangle on $\{2,3,4\}$ is an oriented cycle with multiplicities $(2,k,k)$ \claim{B3.I-tt-triangle}, whose Markov constant is $2^2+k^2+k^2-2k^2=4$ \claim{B3.I-tt-markov}.
By Proposition~\ref{prop:rank3}, this triangle is mutation-cyclic, contradicting mutation-acyclicity of the whole quiver.
So $t\ne u$, and Proposition~\ref{prop:unit} is proved. \qed

\section*{Use of AI}

After being prompted at length by the authors, the main results were discovered by GPT-Astra (OpenAI) and the original statements were formalized in Lean (\url{https://github.com/KTKAW/rank4-paper}).
The results follow from many well-known results in the literature, along with the AI-discovery of the family $Q(t)$ and many case checks carried out by AI systems.
The exact checks in all of the appendix calculations are also verified by Python scripts.
The text in this article was entirely organized and reviewed by humans. 
Significant portions of the text were generated, reviewed, or edited by AI.
There was significant effort put in by the human team to generalize the results as far as possible, to simplify the proofs, to give appropriate credit to the human results that came before, and to bring the exposition up to standard.

\bibliographystyle{amsalpha}
\bibliography{references}
\end{document}